\newtheorem{theorem}{Theorem}
\newtheorem{lemma}{Lemma}
\def\comment#1{}
\renewcommand\vec[1]{\boldsymbol{#1}}
\newcommand{\E}{\mathbb{E}}
\newcommand{\Nat}{\mathbb{N}}
\newcommand{\indicator}{\mathbb{I}}
\newcommand{\real}{\mathbb{R}}
\renewcommand{\SS}{\mathcal{S}}
\renewcommand{\d}{p}
\newcommand{\Pcal}{\mathcal{P}}
\newcommand{\Qcal}{\mathcal{Q}}
\newcommand{\Ocal}{\mathcal{O}}
\newcommand{\Gcal}{\mathcal{G}}
\newcommand{\Hcal}{\mathcal{H}}
\newcommand{\hatG}{\hat{G}}
\newcommand{\Diam}{\operatorname{Diam}}
\newcommand{\supp}{\operatorname{supp}}
\newcommand{\aff}{\operatorname{aff}}
\newcommand{\conv}{\operatorname{conv}}
\newcommand{\interior}{\operatorname{relint}}
\newcommand{\extr}{\operatorname{extr}}
\newcommand{\vectheta}{\vec{\theta}}
\newcommand{\veceta}{{\vec{\eta}}}
\newcommand{\vecbeta}{{\vec{\beta}}}
\newcommand{\bd}{\operatorname{bd}}
\newcommand{\vol}{\operatorname{vol}}
\newcommand{\W}{W}
\newcommand{\hatveceta}{\hat{\veceta}}
\renewcommand{\AA}{A1}
\newcommand{\eqref}[1]{(\ref{#1})}
\newcommand{\Mult}{\operatorname{Mult}}
\renewcommand{\epsilon}{\varepsilon}
\renewcommand{\emptyset}{\varnothing}
\def\sfrac#1#2{#1/#2}
\def\afrac#1#2{#1/(#2)}
\def\vafrac#1#2{(#1)/(#2)}
\begin{document}
\begin{frontmatter}

\title{Posterior contraction of the population polytope in finite
admixture models}
\runtitle{Convergence of population polytope}

\begin{aug}
\author{\inits{X.}\fnms{XuanLong} \snm{Nguyen}\ead
[label=e1]{xuanlong@umich.edu}}
\address[]{Department of Statistics,
University of Michigan,
456 West Hall,
Ann Arbor, MI 48109-1107,
USA.\\ \printead{e1}}
\end{aug}

\received{\smonth{12} \syear{2012}}
\revised{\smonth{9} \syear{2013}}

%
\begin{abstract}
We study the posterior contraction behavior of the
latent population structure that arises in admixture models
as the amount of data increases. We adopt the geometric view of
admixture models -- alternatively known as topic models --
as a data generating mechanism for points randomly
sampled from the interior of a (convex) population polytope,
whose extreme points
correspond to the population structure variables of interest.
Rates of posterior contraction
are established with respect to Hausdorff
metric and a minimum matching Euclidean metric defined on
polytopes. Tools developed include posterior asymptotics of
hierarchical models
and arguments from convex geometry.
\end{abstract}

%
\begin{keyword}
\kwd{Bayesian asymptotics}
\kwd{convex geometry}
\kwd{convex polytope}
\kwd{Hausdorff metric}
\kwd{latent mixing measures}
\kwd{population structure}
\kwd{rates of convergence}
\kwd{topic simplex}
\end{keyword}

\end{frontmatter}

\section{Introduction}\label{intro}
We study a class of hierarchical mixture
models for categorical data known as the admixtures, which were independently
developed in the landmark papers by
Pritchard, Stephens and Donnelly \cite{Pritchard-etal-00}
and Blei, Ng and Jordan \cite{Blei-etal-03}.
The former set of authors applied their modeling to population genetics,
while the latter considered applications in text processing and computer
vision, where their
models are more widely known as the \emph{latent Dirichlet allocation}
model, or a topic model. Admixture modeling has been applied to
and extended in a vast number of fields of engineering and sciences --
in fact, the Google scholar pages
for these two original papers alone combine for more than a dozen
thousands of citations.
In spite of their wide uses,
asymptotic behavior of hierarchical models such as the admixtures
remains largely unexplored, to the best of our knowledge.

A finite admixture model posits that there are $k$ populations, each of which
is characterized by a $\Delta^d$-valued vector $\vectheta_j$
of frequencies for generating a set of discrete values $\{0,1,\ldots
,d\}$,
for $j=1,\ldots,k$. Here, $\Delta^d$ is the $d$-dimensional
probability simplex.
A sampled individual
may have mixed ancestry and as a result inherits some fraction of
its values from each of its ancestral populations.
Thus, an individual is associated with a proportion vector
$\vecbeta= (\beta_1,\ldots,\beta_k)
\in\Delta^{k-1}$, where $\beta_j$ denotes the proportion of the individual's
data that are generated according to population $j$'s frequency vector
$\vectheta_j$. This yields a vector of frequencies
$\veceta= \sum_{j=1}^{k}\beta_j \vectheta_j \in\Delta^d$ associated
with that individual.
In most applications, one does not observe $\veceta$ directly, but rather
an i.i.d. sample generated from a multinomial distribution parameterized
by $\veceta$. The collection of
$\vectheta_1,\ldots,\vectheta_k$ is referred to as the \emph
{population structure}
in the admixture.
In population genetics modeling, $\vectheta_j$ represents the allele
frequencies at each locus in an individual's genome from the $j$th population.
In text document modeling, $\vectheta_j$ represents the frequencies
of words generated by the $j$th topic, while an individual is a
document, that is, a collection of words. In computer vision,
$\vectheta_j$ represents the frequencies of objects generated by
the $j$th scenary topic, while an individual is a natural image,
that is, a collection of scenary objects.
The primary interest is
the inference of the population structure on the basis of sampled data.
In a Bayesian estimation setting, the population structure is assumed
random and endowed with a prior distribution --
accordingly one is interested in the behavior of the posterior
distribution of the population structure given the available data.

The goal of this paper is to obtain contraction rates of the
posterior distribution of the latent population
structure that arises in admixture models, as the amount of data
increases. Admixture models
present a canonical mixture model for categorical data
in which the population structure provides the support for the mixing measure.
Existing works on convergence behavior of
mixing measures in a mixture model are quite rare, in either
frequentist or Bayesian estimation literature.
Chen provided the optimal convergence rate of mixing
measures in several finite mixtures for univariate data \cite{Chen-95}
(see also \cite{Ishwaran-James-Sun-01}). Recent progress on
multivariate mixture
models include papers by Rousseau and Mengersen \cite{Rousseau-Mengersen-11}
and Nguyen \cite{Nguyen-11}. In \cite{Nguyen-11},
posterior contraction rates of mixing measures in several finite and
infinite mixture models for multivariate and continuous data were obtained.
Toussile and Gassiat established consistency of a penalized
MLE procedure for a finite admixture model \cite{Toussile-Gassiat}.
This issue has also attracted increased attention in machine learning.
Recent papers by Arora \textit{et al.} \cite{Arora-LDA-12} and
Anandkumar \textit{et al.} \cite{Anandkumar-LDA-12}
study convergence properties of certain computationally efficient
learning algorithms based on matrix factorization
techniques.

There are a number of questions that arise in the convergence
analysis of admixture models for categorical data. The first question is
to find a suitable metric in order to establish rates of convergence.
It would be ideal to establish convergence for each individual element
$\vectheta_i$, for $i=1,\ldots, k$. This is a challenging task due to
the problems of identifiability. A (relatively) minor issue
is known as ``label-switching'' problem. That is, one can identify
the collection of $\vectheta_i$'s only up to a permutation. A deeper
problem is that any $\vectheta_j$ that can be expressed as a convex combination
of the others $\vectheta_{j'}$ for $j' \neq j$ may be difficult to identify,
estimate, and analyze.
To get around this difficulty, we propose to study the
convergence of population structure variables through its convex hull
$G = \conv(\vectheta_1,\ldots,\vectheta_k)$, which shall be
referred to as the \emph{population polytope}.
Convergence of convex polytopes can be evaluated in terms of
Hausdorff metric $d_{\mathcal{H}}$, a metric commonly utilized in
convex geometry \cite{Schneider-93}. Moreover, under
some geometric identifiability conditions, it can be shown that
convergence in Hausdorff metric
entails convergence of all extreme points of the polytope
via a minimum-matching distance metric (defined in Section~\ref{Sec-statement}).
This is the theory we aim for in this paper.
Note however that in a typical setting of topic modeling
where $d \geq k$, all population structure variables $\vectheta
_1,\ldots,
\vectheta_k$ in general positions in $\Delta^d$
\emph{are} extreme points of the population polytope.
Thus, in this setting, convergence in Hausdorff metric entails
convergence of the population structure variables up to a
permutation of labels. Convergence behavior of (the posterior of)
non-extreme points among $\vectheta_1,\ldots, \vectheta_k$,
when $k > d$, remains elusive as of this writing.


The second question in an asymptotic study of
a hierarchical model is how to address
multiple quantities that define the amount of empirical data.
The admixture model we consider has
two asymptotic quantities that play asymmetric roles --
$m$ is the number of individuals, and $n$ is the number of data points
associated
with each individual. Both $m$ and $n$ are allowed to increase to infinity.
A simple way to think about this asymptotic setting is to let $m$ go to
infinity,
while $n:= n(m)$ tends to infinity at a certain rate which may be constrained
with respect to $m$. Let $\Pi$ be a prior distribution on
variables $\vectheta_1,\ldots, \vectheta_k$.
The goal is to derive a vanishing sequence of $\delta_{m,n}$,
depending on both $m$ and $n$,
such that the posterior distribution of the $\vectheta_i$'s satisfies,
for some sufficiently large constant $C$,
\[
\Pi \bigl(d_{\mathcal{H}}(G,G_0) \geq C \delta_{m,n} |
\SS _{[n]}^{[m]} \bigr) \rightarrow0
\]
in $P_{\SS_{[n]}|G_0}^{m}$-probability
as $m \rightarrow\infty$ and $n = n(m) \rightarrow\infty$ suitably.
Here, $P_{\SS_{[n]}|G_0}^{m}$ denotes\vspace*{-2pt} the true distribution
associated with population polytope $G_0$ that
generates a given $m\times n$ data\vspace*{2pt} set $\SS_{[n]}^{[m]}$. As mentioned,
$\delta_{m,n}$ is also the posterior contraction rate for the extreme points
among population structure variables $\vectheta_1,\ldots,\vectheta_k$.

\subsection*{Overview of results}
Suppose that $n \rightarrow\infty$ at a rate constrained by
$\log m < n$ and $\log n = o(m)$.
In an overfitted setting, that is, when the true population polytope
may have
less than $k$ extreme points, we show that under some mild
identifiability conditions
the posterior contraction rate
in either Hausdorff or minimum-matching distance metric is
$\delta_{m,n} \asymp [\frac{\log m}{m} + \frac{\log n}{n} +
\frac{\log n}{m}
]^{\afrac{1}{2(p+\alpha)}}$, where $p = (k-1)\wedge d$ is the
intrinsic dimension of the population polytope while
$\alpha$ denotes the regularity level near boundary of the
support of the density function for $\veceta$ (to be defined in sequel).
On the other hand, if either the true population polytope is known to have
exactly $k$ extreme points, or if the pairwise distances among
the extreme points are bounded from below by a known positive constant,
then the contraction rate is improved to a parametric rate
$\delta_{m,n} \asymp [\frac{\log m}{m} + \frac{\log n}{n} +
\frac{\log n}{m}
]^{\afrac{1}{2(1+\alpha)}}$.

The constraints on $n = n(m)$,
and the appearance of
quantity $\log n/m$ in the convergence rate are quite interesting. Both
the constraints
and the derived rate are rooted in a condition
on the required thickness of the prior support of the marginal
densities of
the data and an upper bound on the entropy
of the space of such densities. This suggests an interesting
interaction between layers in the latent hierarchy of the admixture model
worthy of further investigation. For instance,
it is not clear whether posterior consistency continues to hold
if $n$ falls outside of the specified range, and what effects this has
on convergence rates, with or without additional assumptions on the data.
This appears quite difficult with our present set of techniques.

We also establish minimax lower bounds for both settings. In the
overfitted setting, the obtained lower bound is
$(mn)^{-1/(q+\alpha')}$, where
$q = \lfloor k/2 \rfloor\wedge d$, and $\alpha'$ is a non-negative
constant to be defined in the sequel that satisfies $\alpha' \leq
\alpha$.
This lower bound can be strengthened with additional conditions on the model.
Although this lower bound does not match exactly
with the posterior contraction rate, both are notably nonparametrics-like
for depending on dimensionality $d$ and on $k$.
In particular, if $n \asymp m$, and $k\geq2d$, the posterior contraction
rate becomes $(\log m/m)^{-\afrac{1}{2(d+\alpha)}}$.
Compare this to the lower bound $m^{-2/(d+\alpha')}$, whose exponent
differs approximately by only a factor of 4 for large~$d$.

\comment{
As mentioned earlier the existing works on admixture models
include the recent papers by Arora et al \cite{Arora-LDA-12} and
Anandkumar et al \cite{Anandkumar-LDA-12}.
Both sets of authors analyzed specific learning algorithms for
recovering the population structure by taking
the viewpoint of matrix factorization. They both work on the
setting where the number of extreme points $k$ is known, and
$k \ll d$. Arora et al \cite{Arora-LDA-12} additionally
required interesting but very special conditions on the nature of
the extreme points, for which a polynomial time learning algorithm exists,
and established an estimation error rate for the algorithm.
Anandkumar et al \cite{Anandkumar-LDA-12} proposed a novel moment-based
estimation method and obtained a consistency result. By contrast,
we analyze general Bayesian
estimation without concerning a specific inference algorithm.
The posterior contraction rates and minimax results
obtained in this paper
appear new. The posterior asymptotics and convex geometric techniques developed
here are quite distinct from all existing works.
}

\subsection*{Method of proofs and tools}
The general
framework of posterior asymptotics for density estimation
has been well-established
\cite{Ghosal-Ghosh-vanderVaart-00,Shen-Wasserman-01}
(see also \cite
{Barron-Shervish-Wasserman-99,Ghosh-Ramamoorthi-02,Walker-04,Ghosal-vanderVaart-07,Walker-Lijoi-Prunster-07}).
This framework continues
to be very useful, but the analysis of mixing
measure estimation in multi-level models presents distinct new challenges.
In Section~\ref{Sec-gen}, we shall formulate an abstract theorem
(Theorem~\ref{Thm-Gen})
on posterior contraction of latent variables of interest in an admixture
model, given $m\times n$ data, by reposing on the framework
of \cite{Ghosal-Ghosh-vanderVaart-00} (see also \cite{Nguyen-11}).
The main novelty here is
that we work on the space of latent variables (e.g., space
of latent population structures endowed with Hausdorff or a comparable metric)
as opposed to the space of data densities endowed with Hellinger metric.
A basic quantity is the \emph{Hellinger information}
of the Hausdorff metric for a given subset of polytopes.
Indeed, the Hellinger information
is a fundamental quantity running through the analysis, which
ties together the amount of data $m$ and $n$ -- key quantities that are
associated with different levels in the model hierarchy.

The bulk of the paper is devoted to establishing properties of
the Hellinger information, which are fed into
Theorem~\ref{Thm-Gen} so as to obtain concrete convergence rates.
This is achieved through a number of
inequalities which illuminate the relationship between Hausdorff
distance
of a given pair of population polytopes $G,G'$,
and divergence functionals (e.g., Kullback--Leibler divergence or
variational distance) of the induced marginal data densities.
The technical challenges lie in the fact that in order to relate
$G$ to the marginal density of the data, one has to integrate out
multiple layers of latent variables, $\veceta$ and $\vecbeta$.
Techniques in convex geometry come in very handily in the derivation
of both lower and upper bounds \cite{Schneider-93}.

The remainder of the paper is organized as follows. The model
and main results are described in Section~\ref{Sec-statement}.
Section~\ref{Sec-geometry} describes the basic geometric assumptions
and their consequences. An abstract theorem for posterior contraction
for $m\times n$ data setting is formulated in Section~\ref{Sec-gen},
whose conditions are
verified in the subsequent sections. Section~\ref{Sec-contraction}
presents inequalities for Hausdorff distances which result in
lower bounds on the Hellinger information, while Section~\ref{Sec-concentration}
provides a lower bound on Kullback--Leibler neighborhoods
of the prior support (that is, a bound the prior thickness).
Proofs of main theorems and other technical lemmas
are presented in Section~\ref{Sec-proofs} and the \hyperref[appA]{Appendices}.

\subsection*{Notations}
$B_{\d}(\vectheta,r)$ denotes a closed $\d$-dimensional
Euclidean ball centered at $\vectheta$ and has radius $r$.
$G_\epsilon$ denotes the Minkowsky sum $G_\epsilon:= G + B_{d+1}(\vec
{0},\epsilon)$.
$\bd G, \extr G, \operatorname{Diam}G$,
$\aff G, \vol_{p} G$ denote the boundary, the set of extreme points,
the diameter, the affine span, and the $p$-dimensional volume of
set $G$, respectively. ``Extreme points''
and ``vertices'' are interchangeable throughout this paper.
We define the dimension of a convex polytope to be the dimension of its
affine hull. It is a well-known fact that if a polytope has $k$ extreme
points in general positions in $\Delta^d$, then its dimension is
$(k-1)\wedge d$.
Set-theoretic difference between two sets is defined as $G
\bigtriangleup
G' = (G\setminus G') \cup(G' \setminus G)$.
$N(\epsilon, \Gcal, d_{\mathcal{H}})$ denotes the covering number of
$\Gcal$ in
Hausdorff metric $d_{\mathcal{H}}$.
$D(\epsilon, \Gcal, d_{\mathcal{H}})$ is the packing number of
$\Gcal$ in
Hausdorff metric.
Several divergence measures for probability distributions are employed:
$K(p,q), h(p,q), V(p,q)$ denote
Kullback--Leibler divergence, Hellinger and total variation distance
between two densities $p$ and $q$ defined with respect to a measure
on a common space: $K(p,q) = \int p\log(p/q) $,
$h^2(p,q) = \frac{1}{2}\int(\sqrt{p} - \sqrt{q})^2$
and $V(P,Q) = \frac{1}{2}\int|p -q|$.
In addition, we define $K_2 = \int p[\log(p /q)]^2$.
Throughout the paper, $f(m,n,\epsilon) \lesssim g(m,n,\epsilon)$,
equivalently, $f = \mathrm{O}(g)$,
means $f(m,n,\epsilon) \leq Cg(m,n,\epsilon)$ for some constant
$C$ independent of asymptotic quantities $m, n$ and $\epsilon$ --
details about the dependence of $C$ are made explicit unless obvious
from the context. Similarly,
$f(m,n,\epsilon) \gtrsim g(m,n,\epsilon)$ or $f = \Omega(g)$
means $f(m,n,\epsilon) \geq Cg(m,n,\epsilon)$.

\section{Main results}
\label{Sec-statement}

\subsection*{Model description}
As mentioned in the \hyperref[intro]{Introduction},
the central objects of the admixture model are \emph{population structure}
variables $(\vectheta_1,\ldots, \vectheta_k)$, whose convex hull is called
the \emph{population polytope}: $G = \conv(\vectheta_1,\ldots,
\vectheta_k)$.
$\vectheta_1,\ldots, \vectheta_k$ reside in
$d$-dimensional probability simplex $\Delta^{d}$.
$k < \infty$ is assumed known.
Note that $G$ has at most $k$ vertices (i.e., extreme points) among
$\vectheta_1, \ldots, \vectheta_k$.

A random vector $\veceta\in G$ is parameterized
by $\veceta= \beta_1 \vectheta_1 + \cdots+ \beta_k \vectheta_k$, where
$\vec{\beta} = (\beta_1,\ldots, \beta_k) \in\Delta^{k-1}$ is a
random vector distributed according to a distribution
$P_{\vecbeta|\gamma}$ for some parameter $\gamma$ (both \cite
{Pritchard-etal-00}
and \cite{Blei-etal-03} used the Dirichlet distribution).
Given $\vectheta_1,\ldots,\vectheta_k$, this induces a probability
distribution
$P_{\veceta|G}$ whose support is the convex set $G$. Details
of this distribution, suppressed for the time being,
are given explicitly by Equations \eqref{Eqn-density-1} and \eqref{Eqn-density-2}.
[To be precise, $P_{\veceta|G}$ should be written as
$P_{\veceta|\vectheta_1,\ldots,\vectheta_k; G}$.
That is, $G$ is always attached with a specific set of $\vectheta_j$'s.
Throughout the paper, this specification of $G$ is always understood
but notationally suppressed to avoid cluttering.]

For each individual $i=1,\ldots,m$, let $\veceta_i \in\Delta^d$ be
an independent random vector distributed by $P_{\veceta|G}$. The
observed data
associated with $i$, $\SS_{[n]}^i = (X_{ij})_{j=1}^{n}$ are assumed to
be i.i.d.
draws from the multinomial distribution $\Mult(\veceta_i)$
specified by $\veceta_i := (\eta_{i0},\ldots,\eta_{id})$.
That is, $X_{ij} \in\{0,\ldots,d\}$ such that
$P(X_{ij} = l | \veceta_i) = \eta_{il}$ for $l=0,\ldots,d$.

Admixture models are simple when specified in a hierarchical manner as
given above.
The relevant distributions are written down below.
The joint distribution of the generic random variable
$\veceta$ and $n$-vector $\SS_{[n]}$ (dropping
superscript $i$ used for indexing a specific individual) is denoted
by $P_{\veceta\times\SS_{[n]}|G}$ and its density $p_{\veceta\times
\SS_{[n]}
| G}$.
We have
%
\begin{equation}
\label{Eqn-joint-eta-SSn} p_{\veceta\times\SS_{[n]}|G}\bigl(\veceta_i,\SS_{[n]}^i
\bigr) = p_{\veceta|G}(\veceta_i) \times\prod
_{j=1}^{n}\prod_{l=0}^{d}
\eta_{il}^{\indicator(X_{ij} = l)}.
\end{equation}
The distribution of $\SS_{[n]}$, denoted by $P_{\SS_{[n]}|G}$, is
obtained by integrating out $\veceta$, which
yields the following density with respect to
counting measure:
%
\begin{equation}
\label{Eqn-marginal-SSn} p_{\SS_{[n]}|G}\bigl(\SS_{[n]}^{i}\bigr) =
\int_{G} \prod_{j=1}^{n}
\prod_{l=0}^{d}\eta _{il}^{\indicator(X_{ij} = l)}
\,\mathrm{d} P_{\veceta|G}(\veceta_i).
\end{equation}
The joint distribution of the full data set $\SS_{[n]}^{[m]}:= (\SS
_{[n]}^i)_{i=1}^{m}$,
denoted by $P_{\SS_{[n]}|G}^m$, is a product distribution:
%
\begin{equation}
\label{Eqn-marginal-SSmn} P_{\SS_{[n]}|G}^m\bigl(\SS_{[n]}^{[m]}
\bigr) := \prod_{i=1}^{m} P_{\SS
_{[n]}|G}
\bigl(\SS_{[n]}^{i}\bigr).
\end{equation}

Admixture models are customarily introduced in an equivalent way as
follows \cite{Blei-etal-03,Pritchard-etal-00}:
For each $i=1,\ldots,m$, draw an independent random variable $\vec
{\beta} \in
\Delta^{k-1}$ as $\vec{\beta} \sim P_{\vecbeta|\gamma}$.
Given $i$ and $\vec{\beta}$, for $j=1,\ldots,n$, draw $Z_{ij} |\vec
{\beta}
\stackrel{\mathrm{i.i.d.}}{\sim} \Mult(\vec{\beta})$. $Z_{ij}$ takes
values in $\{1,\ldots,k\}$. Now, data point $X_{ij}$ is randomly
generated by $X_{ij} | Z_{ij} = l, \vectheta
\sim\Mult(\vectheta_l)$. This yields the same joint distribution
of $\SS_{[n]}^i = (X_{ij})_{j=1}^{n}$ as the one described earlier.
The use of latent variables $Z_{ij}$ is amenable to the development
of computational algorithms for inference. However, this representation
bears no significance within the scope of this work.

\subsection*{Asymptotic setting and metrics on population polytopes}
Assume the data set $\SS_{[n]}^{[m]}= (\SS_{[n]}^i)_{i=1}^m$ of size
$m\times n$
is generated according an admixture model given by
``true'' parameters $\vectheta_1^*,\ldots, \vectheta_k^*$.
$G_0 =
\conv(\vectheta_{1}^*,\ldots, \vectheta_{k}^*)$ is
the true population polytope.
Under the Bayesian estimation framework,
the population structure variables $(\vectheta_1,\ldots, \vectheta_k)$
are random and endowed with a prior distribution $\Pi$.
The main question
to be addressed in this paper is the contraction behavior of
the posterior distribution $\Pi(G|\SS_{[n]}^{[m]})$, as
the number of data points $m\times n$ goes to infinity.

It is noted that we do not always assume that the number
of extreme points of the population polytope $G_0$
is $k$. We work in a general overfitted setting where
$k$ only serves as the upper bound of the true
number of extreme points for the purpose of model parameterization.
The special case in which
the number of extreme points of $G_0$ is known a priori is also
interesting and will be considered.

\comment{
\begin{itemize}
\item[(1)] What is the contraction behavior of
$\Pi(\vectheta_1,\ldots,\vectheta_k | \eta_1,\ldots,\eta_m)$ as
$m\rightarrow\infty$.
\item[(2)] In a full admixture model, we do not have access to $\vec
{\eta}$.
Instead, we observe $X_{ij}$'s. As both $m \rightarrow\infty$, $n
\rightarrow\infty$,
what is the contraction for the posterior probability of the latent population
structure
\[
\Pi(G|X_{ij}; i=1,\ldots,m; j=1,\ldots,n).
\]
\item[(3)] Above it is assumed that the probability distribution $\Pi
(\vec{\beta}|\gamma)$
is given, i.e., $\gamma= \gamma^*$ is known.
What if $\gamma^*$ is unknown? Can we still learn $\vectheta_1,\ldots
,\vectheta_k$?
Here we will need to endow a prior on $\gamma$, $\Pi(\gamma)$, and analyze
the posterior $\Pi(G,\gamma|X_{ij}; i=1,\ldots,m; j=1,\ldots,n)$ jointly.
\end{itemize}
}

Let $\extr G$ denote the set of extreme points of a given polytope $G$.
$\Gcal^k$ is the set of population polytopes in $\Delta^d$ such that
$|\extr G| \leq k$. Let $\mathcal{G}^*= \bigcup_{2\leq k < \infty}
\Gcal^k$
be the set of
population polytopes that have finite number of extreme points in
$\Delta^d$.
A natural metric on $\mathcal{G}^*$ is the following ``minimum-matching''
Euclidean distance:
\[
d_{\mathcal{M}}\bigl(G,G'\bigr) = \max_{\vectheta\in\extr G}
\min_{\vectheta
' \in\extr G'} \bigl \|\vectheta- \vectheta'\bigr \| \vee \max
_{\vectheta' \in\extr G'} \min_{\vectheta\in\extr G} \bigl \| \vectheta'
- \vectheta\bigr \|.
\]
%
A more common metric is the Hausdorff metric:
\[
d_{\mathcal{H}}\bigl(G,G'\bigr) = \min\bigl\{\epsilon\geq0 | G
\subset G_{\epsilon}'; G' \subset
G_{\epsilon}\bigr\} = \max_{\vectheta\in G} d\bigl(
\vectheta,G'\bigr) \vee\max_{\vectheta' \in
G'} d\bigl(
\vectheta',G\bigr).
\]
Here, $G_{\epsilon} = G+ B_{d+1}(\vec{0},\epsilon) := \{\vectheta
+e| \vectheta\in G,
e \in\real^{d+1},
\|e\| \leq1\}$,
and $d(\vectheta,G') := \inf\{\|\vectheta-\vectheta'\|, \vectheta'
\in G'\}$.
Observe that $d_{\mathcal{H}}$ depends on the boundary structure of sets,
while $d_{\mathcal{M}}$ depends on only extreme points.
In general, $d_{\mathcal{M}}$ dominates $d_{\mathcal{H}}$, but under
additional
mild assumptions the two metrics are equivalent (see Lemma~\ref{Lem-M}).

We introduce a notion of regularity for a family probability distributions
defined on convex polytopes $G \in\mathcal{G}^*$. This notion is
concerned with
the behavior near the boundary of the support of distributions
$P_{\veceta|G}$.
We say a family of distributions $\{P_{\veceta|G}| G\in\Gcal^k\}$
is $\alpha$-regular if for any $G \in\Gcal^k$ and any $\veceta_0
\in\bd G$,
%
\begin{equation}
\label{Eqn-regularity} P_{\veceta|G}\bigl(\|\veceta- \veceta_0\| \leq\epsilon\bigr)
\geq c \epsilon^\alpha\vol_{p} \bigl(G \cap B_{d+1}(
\veceta_0,\epsilon)\bigr),
\end{equation}
where $p$ is the number of dimensions of the affine space $\aff G$ that spans
$G$, constant $c>0$ is independent of $G, \veceta_0$ and $\epsilon$.

According to Lemma~\ref{Lem-Dirichlet} (in Section~\ref{Sec-concentration})
$\alpha$-regularity
holds for a range of $\alpha$, when $P_{\vecbeta|\gamma}$ is a
Dirichlet distribution, but there may be
other choices. We will see that $\alpha$ plays an important role
in characterize the rates of contraction for the posterior of the
population polytope.
%
%
\begin{Assumptions*} $\Pi$ is a prior distribution
on $\vectheta_1,\ldots, \vectheta_k$ such that
the following hold for the relevant parameters that reside
in the support of $\Pi$:
\begin{enumerate}[(S3b)]
\item[(S0)] Geometric properties (\textup{\ref{propA1}}) and (\textup{\ref{propA2}}) listed in Section~\ref{Sec-geometry}
are satisfied uniformly for all $G$.

\item[(S1)] Each of $\vectheta_1,\ldots,\vectheta_k$
is bounded away from the boundary of $\Delta^d$. That is, if
$\vectheta_{j} = (\theta_{j,0},\ldots, \theta_{j,d})$ then
$\min_{l=0,\ldots,d} \theta_{j,l} > c_0$ for all $j=1,\ldots,k$.
\item[(S2)] For any small $\epsilon$,
$\Pi(\|\vectheta_j-\vectheta_j^*\| \leq\epsilon\ \forall j=1,\ldots,k)
\geq c'_0\epsilon^{kd}$, for some $c'_0>0$.

\comment{
\item[(S3)] $\vecbeta= (\beta_1,\ldots, \beta_k)$ is distributed
(a priori) according to a symmetric
probability distribution $P_{\vecbeta}$ on $\Delta^{k-1}$. That is,
the random variables $\beta_1,\ldots,\beta_k$ are exchangeable.
}

\item[(S3a)] $P_{\vecbeta}$ induces a family of distributions $\{
P_{\veceta|G}|
G\in\Gcal^k\}$ that is $\alpha$-regular.

\item[(S3b)] $\vecbeta = (\beta_1,\ldots, \beta_k)$ is
distributed
(a priori) according to a symmetric
probability distribution $P_{\vecbeta}$ on
$\Delta^{k-1}$. That is,
the random variables $\beta_1,\ldots,\beta_k$ are a priori
exchangeable.
\end{enumerate}
\end{Assumptions*}

\begin{theorem}
\label{Thm-Main} Let $G_0 \in\Gcal^k$ and $G_0$ is in the support
of prior $\Pi$. Let $\d= (k-1)\wedge d$.
Under assumptions \textup{(S0)--(S3)} of the admixture model, as
$m\rightarrow\infty$ and $n\rightarrow\infty$ such as
$\log\log m \leq\log n = \mathrm{o}(m)$,
for some sufficiently large constant $C$ independent of $m$ and $n$,
%
\begin{equation}
\label{Eqn-PC} \Pi\bigl(d_{\mathcal{M}}(G_0,G) \geq C
\delta_{m,n} | \SS_{[n]}^{[m]}\bigr) \longrightarrow0
\end{equation}
in $P_{\SS_{[n]}|G_0}^m$-probability. Here,
\[
\delta_{m,n} = \biggl[\frac{\log m}{m} + \frac{\log n}{n} +
\frac{\log n}{m} \biggr]^{\afrac{1}{2(p+\alpha)}}.
\]
The same statement holds for the Hausdorff metric $d_{\mathcal{H}}$.
\end{theorem}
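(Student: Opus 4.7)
\textbf{The plan} is to reduce Theorem~\ref{Thm-Main} to the abstract posterior-contraction result Theorem~\ref{Thm-Gen} of Section~\ref{Sec-gen}, working directly on the latent space of population polytopes rather than on the space of data densities. Three ingredients must be supplied: (a) a \emph{prior thickness} bound, i.e.\ a lower bound on the prior mass of a Kullback--Leibler neighborhood of $\pGz$; (b) an \emph{entropy/sieve} bound for $\Gcal^k$ in $\dH$ and the induced bound on $\{\pG\}$ in Hellinger distance; and (c) a \emph{Hellinger information} bound of the form $h^2(\pG,\pGz) \gtrsim \dH(G,G_0)^{2(p+\alpha)}$ on a suitable sieve, which is what converts Hellinger contraction of the marginals back into Hausdorff contraction of the polytopes. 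Items (a) and (c) correspond to Sections~\ref{Sec-concentration} and~\ref{Sec-contraction}; (b) is an elementary volumetric computation in $\Delta^d$.

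\textbf{Steps (a) and (b).} For the prior mass, I would combine (S2), which gives $\Pi(\|\vectheta_j - \vectheta_j^*\| \leq \epsilon,\; \forall j) \gtrsim \epsilon^{kd}$, with Lipschitz continuity of the map $G \mapsto \pG$ in $\dH$. Assumption (S1) keeps each $\vectheta_j$ uniformly away from $\bd \Delta^d$, so $\log \pG$ is bounded; perturbing by $\dH \leq \epsilon$ then yields $K(\pGz,\pG) \lesssim n \epsilon^2$ and a matching $K_2$ bound, the factor of $n$ arising from the inner multinomial layer. Translating an $\epsilon^{kd}$ prior mass bound into the Kullback--Leibler scale thus costs a $\log n$ factor and produces the $\log n / m$ term in $\delta_{m,n}$. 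For the entropy, an $\epsilon$-cover of the $k$ vertices of $G$ in $\Delta^d$ yields $\log N(\epsilon,\Gcal^k,\dH) \lesssim k d \log(1/\epsilon)$, which feeds into the sieve condition of Theorem~\ref{Thm-Gen} and supplies the $\log m /m + \log n / n$ terms.

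\textbf{Step (c), the main obstacle.} Given $\dH(G,G_0) = \epsilon$, the geometric conditions (\AA) and (\A) of Section~\ref{Sec-geometry} allow one to locate a region inside $G_0 \setminus G$ (or $G \setminus G_0$) that is trapped near a face of intrinsic dimension $p = (k-1)\wedge d$ and has $p$-volume of order $\epsilon^p$. The $\alpha$-regularity assumption (S4) then forces $\Peta$ or $P_{\veceta|G_0}$ to place mass of order $\epsilon^{p+\alpha}$ on this set while the other places none, yielding $V(\Peta, P_{\veceta|G_0}) \gtrsim \epsilon^{p+\alpha}$. The hard part is pushing this separation through the multinomial layer $\veceta \mapsto \pG$: the contraction-type inequality of Section~\ref{Sec-contraction} exploits strong identifiability of the inner multinomial sample of size $n$ to guarantee non-cancellation and yields $h^2(\pG,\pGz) \gtrsim \dH(G,G_0)^{2(p+\alpha)}$. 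Feeding (a), (b), (c) into Theorem~\ref{Thm-Gen} gives Hellinger contraction at rate $\epsmn^2 \asymp \log m/m + \log n/n + \log n/m$; inverting the Hellinger information bound converts this to the polytope rate $\delta_{m,n} = \epsmn^{1/(p+\alpha)}$ in $\dH$. The growth constraint $\log\log m \leq \log n = o(m)$ is precisely what keeps $\epsmn \to 0$ and prevents the cross term $\log n/m$ from dominating; the equivalence of $\dH$ and $\dM$ on $\Gcal^k$ under (\AA), (\A) (Lemma~\ref{Lem-M}) promotes the conclusion from $\dH$ to $\dM$.
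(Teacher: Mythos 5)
Your plan follows the paper's route exactly: reduce to the abstract contraction result (Theorem~\ref{Thm-Gen}), supply the Hellinger-information lower bound from the geometric contraction result (Theorem~\ref{Thm-C}(a), after Lemmas~\ref{Lem-diff-1} and $\alpha$-regularity), supply the prior thickness from Theorem~\ref{Thm-KL}, bound the $\dH$-entropy of $\Gcal^k$, and close with Lemma~\ref{Lem-M} to pass from $\dH$ to $\dM$.

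One quantitative detail you should repair, because it is where the $\log n/m$ term actually comes from. You asserted that perturbing by $\dH\leq\epsilon$ gives $K(\pGz,\pG)\lesssim n\epsilon^2$. The bound is \emph{linear}, not quadratic: Lemmas~\ref{Lem-KL-bound} and~\ref{Lem-W} give $K(\pGz,\pG)\lesssim \frac{n}{c_0}\W_1(\pGzeta,\pGeta)\lesssim n\,\dH(G_0,G)$. The usual quadratic multinomial heuristic does not apply because $\aff G$ moves with $G$, so the supports of $\Peta$ and $\pGzeta$ misalign and the KL is controlled via a Wasserstein coupling, not a Taylor expansion. This linear scaling, together with the crude $\pG/\pGz\leq c_0^{-n}$ bound needed for the Wong--Shen step, is what forces the choice $\epsilon\asymp\epsmn^2/n^3$ in Theorem~\ref{Thm-KL} and therefore inserts $\log n$ into $\log\Pi(B_K(G_0,\epsmn))$; that is the precise source of $\log n/m$ and also of the constraint $\log m< n$ (equivalently $\log\log m\leq\log n$). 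Likewise, ``strong identifiability'' is not quite the mechanism in step (c): Theorem~\ref{Thm-C}(a) is a Hoeffding-concentration argument (concentration of the empirical frequency $\hatveceta$ around $\veceta$, plus a test-set $A^*$ separated from $G'$), and the resulting exponential error term $6(d+1)e^{-n\epsilon^2/32(d+1)}$ is what requires $\epsilon\gtrsim(\log n/n)^{1/2}$ and contributes the $\log n/n$ piece of $\delta_{m,n}$. With those two corrections your outline reproduces the paper's proof.
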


\begin{Remarks*}
\begin{enumerate}[6.]
\item[1.] Geometric assumption (S0) and its consequences
are presented in the next section. (S0), (S1) and (S2) are
mild assumptions observed in practice
(cf. \cite{Blei-etal-03,Pritchard-etal-00}).

\item[2.] The assumption in (S3b) that $P_{\vecbeta}$ is symmetric is
relatively
strong, but it has been widely adopted in practice
(e.g., symmetric Dirichlet
distributions, including the uniform distribution).
This technical condition is not intrinsic
to the theory,
and is required only to establish an upper bound
of the Kullback--Leibler distance
in terms of Hausdorff distance.
In fact, it may be replaced if such an upper bound can be
established by
some other means. See also the remark following the statement of
Lemma~\ref{Lem-W}.
\comment{
\item The assumption in (S3) that $P_{\vecbeta}$ is symmetric is relatively
strong, but again it has been widely adopted (e.g., symmetric Dirichlet
distributions, including the uniform distribution).
It may be difficult to try to relax this assumption if one insists on
using Hausdorff metric, see the remark following the statement of Lemma~\ref{Lem-W}.
Nonetheless, (S3) is only used for establishing the thickness of the prior
(see Theorem~\ref{Thm-KL}), an important technical component of the
Bayesian asymptotics.
(S3) is not required if one is primarily interested in
establishing convergence rates for point estimators.
}

\item[3.] In practice, $P_{\vecbeta}$ may be further
parameterized as $P_{\vecbeta|\gamma}$, where $\gamma$ is endowed
with a prior distribution. Then, it would be of interest to also
study the posterior contraction behavior for $\gamma$.
In this paper, we have opted to focus only on convergence
behavior of the population structure to simplify the exposition and
the results.

\item[4.] The appearance of both $m^{-1}$ and $n^{-1}$ in the contraction
rate suggests that if either $m$ or $n$ is small, the rate would suffer
even if the total amount of data $m\times n$ increases.
What is quite interesting is the
appearance of $\log n/m$. This is
rooted in an entropy condition (cf. Theorem \ref{Thm-Gen} in Section \ref{Sec-gen}),
which requires an upper bound of the KL
divergence in terms
of Hausdorff distance. It is possible that the appearance of
$\log n/m$ is due to
our general proof technique of posterior contraction
presented in Section \ref{Sec-gen}.
From a hierarchical
modeling viewpoint, this result highlights an interesting interaction
of sample sizes provided to different levels in
the model hierarchy. This issue has not been widely discussed in the
hierarchical modeling literature in a theoretical manner, to
the best of our knowledge.

\item[5.] Note the constraints that $n > \log m$ and $\log n=\mathrm{o}(m)$ are required
in order to obtain rates of posterior contraction.
These constraints are related to the term $\log n /m$ mentioned above --
they stem from
the upper bound on Kullback--Leibler in Lemma~\ref{Lem-KL-bound}. The remark
following the statement of this lemma explains why the upper bound almost
always grows with $n$.
A~very special situation is presented in Lemma~\ref{Lem-KL-ez}
where an upper bound on Kullback--Leibler distance can be obtained that
is independent of $n$. However, such a situation
cannot be verified in any reasonable estimation setting. This suggests
that with our proof technique, we almost always require $n$ to grow
at a constrained rate relatively to $m$ in order to obtain posterior
contraction rates.

\comment{
(v) Since the quantity $\log n/m$ arises partly from a fairly general
proof technique in Bayesian asymptotics, one may wonder whether it is possible
to get rid of it by considering a point estimation procedure for $G$. A
line of reasoning goes like this. For each $i=1,\ldots, m$, as $n
\rightarrow\infty$
one can estimate $\eta_i$ arbitrarily well at a rate $O(n^{-1/2})$.
All that remain
is to estimate polytope $G$ as if the (exact) samples from $P_{\veceta
|G}$ are available,
an estimation task that incurs a rate depending on $m$, independent of $n$.
Since one does not have exact samples from $P_{\veceta|G}$, a
more formal reasoning of similar spirit is needed:
Observe that all information we have access to regarding polytope $G$
is through marginal
density ${p_{\SS_{[n]}|G}}$ for $m$ samples of $n$-vector
$\SS_{[n]}
$. By Theorem~\ref{Thm-C}:
if $h({p_{\SS_{[n]}|G}},{p_{\SS_{[n]}|G_0}})
\rightarrow0$ at a rate, say $\epsilon_m$, as
$m\rightarrow\infty$
and $n\rightarrow\infty$ suitably, then $d_{\mathcal{H}}(G,G_0)$
vanishes at the rate $(\epsilon_m + (\log n/n)^{1/2})^{1/\gamma}$ for some
constant $\gamma> 0$. If we can somehow show that a point estimate for
${p_{\SS_{[n]}|G_0}}$ (e.g., the
maximum likelihood estimator) can yield a parametric rate in Hellinger metric,
say $\epsilon_m \asymp(\log m/ m)^{1/2}$, then the convergence rate
for $G$ in $d_{\mathcal{H}}$ would be
$(\log m/ m + \log n/ n)^{1/2\gamma}$,
which is happily free of $\log n/m$. It is not so obvious if this is
possible, as
sample size $n$ should nonetheless affect the complexity of random
vectors $\SS_{[n]}$.
Formally, the entropy
number of the space of marginal densities ${p_{\SS_{[n]}|G}}$
generally scales with $O(\log n)$.
A standard derivation
(see, e.g., \cite{vandeGeer-00})
would still yield the rate $\epsilon_m \asymp(\log n/m)^{1/2}$. Our conclusion
is that without strong assumptions such as the one discussed in remark (iv),
removing quantities such as $\log n/m$ from the convergence rate is far
from trivial.
}

\item[6.] Constant $\alpha$ plays an important role in the rate exponent.
Intuitively, the larger $\alpha$ is, the weaker the guaranteed probability
mass accrued near the boundary of the population polytope, which implies
less data observed for points located near the boundary. This entails a
weaker guarantee on the rate of convergence. Indeed,
a key step in the proof of the theorem is that Equation \eqref{Eqn-regularity}
enables us to transfer an upper bound on the diminishing variational
distance between, say
distributions $P_{\veceta|G}$ and $P_{\veceta|G'}$, to an upper bound
on the
Hausdorff distance between $G$ and $G'$, while incurring an
extra term $\alpha$ in the exponent.

\item[7.] The exponent $\frac{1}{2(p+\alpha)}$ suggests a slow,
nonparametric-like convergence rate. Moreover,
later in Theorem~\ref{Thm-minimax} we show that this is qualitatively
quite close to a minimax lower bound. On the other hand,
the following theorem shows that it is possible to achieve a
parametric rate if additional constraints are imposed on
the true $G_0$ and/or the prior $\Pi$:
\end{enumerate}
\end{Remarks*}

\begin{theorem}
\label{Thm-Main-2}
Let $G_0 \in\Gcal^k$ and $G_0$ is in the support of prior $\Pi$.
Assume \textup{(S0)--(S3a), (S3b)}, and either one of the
following two conditions hold:
\begin{enumerate}[(b)]
\item[(a)] $|\extr G_0| = k$, or
\item[(b)] There is a known constant $r_0>0$ such that
the pairwise distances of the extreme points of all $G$ in the support
of the prior
are bounded from below by $r_0$.
\end{enumerate}
Then, as $m\rightarrow\infty$ and $n\rightarrow\infty$ such that
$\log m < n$ and $\log n=\mathrm{o}(m)$,
Equation \eqref{Eqn-PC} holds with
\[
\delta_{m,n} = \biggl[\frac{\log m}{m} + \frac{\log n}{n} +
\frac{\log n}{m} \biggr]^{\afrac{1}{2(1+\alpha)}}.
\]
The same statement holds for the Hausdorff metric $d_{\mathcal{H}}$.
\end{theorem}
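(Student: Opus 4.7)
The plan is to re-run the strategy used for Theorem~\ref{Thm-Main}, i.e.\ to invoke the abstract contraction theorem (Theorem~\ref{Thm-Gen}) for the $m\times n$ setting, and only revisit the one ingredient that is sensitive to the local geometry at $G_0$: the lower bound on the Hellinger information
\[\Psi_{G_0}(\epsilon) := \inf\{h(\pG,\pGz) : G\in\supp(\Pi),\ \dH(G,G_0)\geq\epsilon\}.\]
The entropy estimate for $\Gcal^k$ in Hausdorff metric and the prior thickness bound on Kullback--Leibler neighborhoods of $\pGz$ (Theorem~\ref{Thm-KL}) depend only on ambient-space quantities; they carry over verbatim and still contribute the additive $\log m/m+\log n/n+\log n/m$ term. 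Consequently, the entire improvement in the exponent from $1/(2(\p+\alpha))$ to $1/(2(1+\alpha))$ must come from a sharper lower bound on $\Psi_{G_0}$.

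The key geometric observation is that either assumption (a) or (b) forces a rigid bijective matching between $\extr G$ and $\extr G_0$ whenever $\dH(G,G_0)$ lies below a fixed threshold. Under (b) this is a direct pigeonhole argument using the uniform separation $r_0$. Under (a), where $|\extr G_0|=k$, every $G$ in the support of $\Pi$ has $|\extr G|\leq k$, and a quantitative continuity-of-vertices argument (essentially Lemma~\ref{Lem-M} together with (S0)) shows that for $\dH(G,G_0)$ smaller than a $G_0$-dependent constant, in fact $|\extr G|=k$ and each vertex of $G$ lies within $C\dH(G,G_0)$ of a unique vertex of $G_0$. Once this matching is in hand, the symmetric difference $G\triangle G_0$ decomposes into $k$ local ``slivers'', one attached to each matched pair of vertices. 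Under (S0) each sliver sits between a moving vertex and a fixed opposite face whose $(\p-1)$-dimensional volume is uniformly bounded below, so its $\p$-dimensional volume is at least $c\,\dH(G,G_0)$, as opposed to the $\dH(G,G_0)^{\p}$ bound that is all one can guarantee in the overfitted setting of Theorem~\ref{Thm-Main}.

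Feeding this linear-in-$\epsilon$ volume estimate into the $\alpha$-regularity assumption (S4) yields $V(\Peta,P_{\veceta|G_0})\gtrsim\dH(G,G_0)^{1+\alpha}$, by the same chain of inequalities used in Section~\ref{Sec-contraction} with $\p+\alpha$ replaced by $1+\alpha$. The uniform positivity (S1), combined with the comparison between $V(\Peta,P_{\veceta|G_0})$ and $h(\pG,\pGz)$ already developed in the proof of Theorem~\ref{Thm-Main}, then gives $\Psi_{G_0}(\epsilon)\gtrsim\epsilon^{1+\alpha}$; inverting this and inserting it into Theorem~\ref{Thm-Gen} delivers the announced $\delta_{m,n}$. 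The corresponding statement in $\dM$ follows from the one in $\dH$ via Lemma~\ref{Lem-M}, since both (a) and (b) place us in the regime where the two metrics are equivalent in a neighborhood of $G_0$. The step I expect to be most delicate is the quantitative vertex-matching in case (a): it must hold uniformly over a full $\dH$-neighborhood of $G_0$ with constants depending only on $G_0$ and on the geometric constants of (S0), which requires invoking (S0) in a sharper, localized form than is needed for Theorem~\ref{Thm-Main}.
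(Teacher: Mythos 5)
Your proposal is correct and follows the paper's approach almost exactly: the paper's one-sentence proof simply says to re-run the argument of Theorem~\ref{Thm-Main} with Theorem~\ref{Thm-C}(b) (backed by Lemma~\ref{Lem-diff-3}) in place of Theorem~\ref{Thm-C}(a), which replaces the $\vol_p(G\triangle G_0)\gtrsim\dH(G,G_0)^p$ bound by $\vol_p(G\triangle G_0)\gtrsim\dH(G,G_0)$ and hence the exponent $p+\alpha$ by $1+\alpha$; your sliver decomposition and vertex-matching argument is precisely the content of Lemma~\ref{Lem-diff-3}, which you are effectively re-deriving. One small imprecision: not ``each sliver'' has volume $\gtrsim\dH(G,G_0)$ --- only a vertex that has actually moved by order $\dH(G,G_0)$ produces such a sliver, and the paper's Lemma~\ref{Lem-diff-3} argument singles out one such vertex; the conclusion $\vol_p(G\triangle G_0)\gtrsim\dH(G,G_0)$ is unaffected.
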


The next theorem produces minimax lower bounds that are qualitatively quite
similar to the nonparametric-like rates obtained in Theorem~\ref{Thm-Main}.
In the following theorem, $\veceta$ is not parameterized by
$\vecbeta$ and $\vectheta_j$'s as in the admixture model.
Instead, we shall simply replace assumptions (S3a) and (S3b) on
$P_{\vecbeta|\gamma}$ by either one of the following assumptions on
$P_{\veceta|G}$:
\begin{enumerate}[(S4$^{\prime}$)]
%
\item[(S4)] There is a non-negative constant $\alpha'$ such that
for any pair of $p$-dimensional polytopes $G' \subset G$ that satisfy
Property \ref{propA1},
\[
V(P_{\veceta|G}, P_{\veceta|G'}) \lesssim d_{\mathcal
{H}}
\bigl(G,G'\bigr)^{\alpha'} \vol_p G\setminus
G'.
\]

\item[(S4$^{\prime}$)] For any $p$-dimensional polytope $G$,
$P_{\veceta|G}$ is the uniform distribution on $G$.
\end{enumerate}

Note that the condition of $\alpha$-regularity (cf. Equation \eqref
{Eqn-regularity})
implies that $\alpha\geq\alpha'$.
In particular, if (S4$^{\prime}$) is satisfied, then both (S3a)/(S3b) and
(S4) hold with $\alpha= \alpha'= 0$.

Since a parameterization for $\veceta$ is not needed, the overall
model can be simplified as follows: Given population polytope $G \in
\Delta^d$,
for each $i=1,\ldots, m$, draw $\veceta_i \stackrel{\mathrm{i.i.d.}}{\sim}
P_{\veceta|G}$.
For each $j=1,\ldots, n$, draw $\SS_{[n]}^i = (X_{ij})_{j=1}^{n}
\stackrel
{\mathrm{i.i.d.}}{\sim}
\Mult(\veceta_i)$.

\begin{theorem}
\label{Thm-minimax}
Suppose that $G_0 \in\Gcal^k$ satisfies assumptions \textup{(S0)}, \textup{(S1)} and \textup{(S2)}.
Point estimates $\hatG= \hat{G}(\SS_{[n]}^{[m]})$ take value in the set
$\Gcal^*$.
In the following,
the multiplying constants in $\gtrsim$ depend only
on constants specified by these assumptions.
\begin{enumerate}[(b)]
\item[(a)] Let $q = \lfloor k/2 \rfloor\wedge d$.
Under assumption \textup{(S4)}, we have
\[
\inf_{\hat{G} \in\Gcal^*} \sup_{G_0 \in\Gcal^k} P_{\SS
_{[n]}|G_0}^m
d_{\mathcal{H}} (G_0, \hat{G}) \gtrsim \biggl( \frac{1}{mn}
\biggr)^{\afrac{1}{q+\alpha'}}.
\]
\item[(b)] Let $q = \lfloor k/2 \rfloor\wedge d$.
Under assumption \textup{(S4$^{\prime}$)}, we have
\[
\inf_{\hat{G} \in\Gcal^*} \sup_{G_0 \in\Gcal^k} P_{\SS
_{[n]}|G_0}^m
d_{\mathcal{H}} (G_0, \hat{G}) \gtrsim \biggl( \frac{1}{m}
\biggr)^{\sfrac{1}{q}}.
\]
\item[(c)] Assume \textup{(S4)}, and that either condition \textup{(a)} or \textup{(b)} of
Theorem~\ref{Thm-Main-2} holds, then
\[
\inf_{\hat{G} \in\Gcal^*} \sup_{G_0 \in\Gcal^k} P_{\SS
_{[n]}|G_0}^m
d_{\mathcal{H}} (G_0, \hat{G}) \gtrsim \biggl( \frac{1}{mn}
\biggr)^{\afrac{1}{1+\alpha'}}.
\]
Furthermore, if \textup{(S4)} is replaced by \textup{(S4$^{\prime}$)}, the lower bound becomes $1/m$.
\end{enumerate}
\end{theorem}

\begin{Remarks*}
\begin{enumerate}[3.]
\item[1.] There is a gap between the posterior contraction rate in
Theorem~\ref{Thm-Main}
and the minimax lower bound in Theorem~\ref{Thm-minimax}(a). This is
expected because the infimum over point estimates $\hatG$ is taken
over $\Gcal^*$, as opposed to $\Gcal^k$. Nonetheless, the lower
bounds are notably dependent on $d$ and $k$, thereby provide a
partial justification for the
nonparametics-like posterior contraction rates.
It is interesting to note that
if $k \geq2d \gg\alpha$, and allowing $m \asymp n$,
the rate exponents differ approximately by only a factor of 4.
That is, $m^{-1/2(d+\alpha)}$ vis-\`a-vis $m^{-2/(d+\alpha')}$.

\item[2.] The nonparametrics-like lower bounds in part (a) and (b) in the overfitted
setting are somewhat
surprising even if $P_{\vecbeta}$ is known exactly (e.g., $P_{\vecbeta
}$ is uniform distribution).
Since we are more likely to be in the overfitted setting than knowing
the exact number of extreme points, an implication of
this is that it is important in practice to
impose a lower bound on the pairwise
distances between the extreme points of the population polytope.

\item[3.] The results in part (b) and (c) under assumption (S4$'$)
present an interesting scenario in which the obtained lower bounds
do not depend on $n$, which determines the amount of data at the
bottom level in the model hierarchy.

\comment{
(iv) It is worth mentioning that the exponent for $m$ in
the lower bounds $(1/mn)^{1/(d+\alpha)}$ of part (a) (when $k \geq2d$)
and $(1/m)^{1/d}$ of part (b) (when $k \geq2d$ and (S4') holds)
is reminiscent of a general minimax
optimal rate $(\log m/m)^{1/(d+\alpha)}$
for estimating the support of the density function $
{p_{\veceta|G}}$, assuming
that an i.i.d. $m$-sample of $\veceta$ is \emph{directly} observed.
\cite{Tsybakov-97,Singh-etal-09}. A word of caution about making
this comparison is that while the latter problem is easier due to
the direct observations of $\veceta$, the density support for $\veceta$
is not required to be convex as is the case with admixture models.
}
\end{enumerate}
\end{Remarks*}
\section{Geometric assumptions and basic lemmas}
\label{Sec-geometry}

In this section, we discuss the geometric assumptions postulated in
the main theorems, and describe their consequences using
elementary arguments in convex geometry of Euclidean spaces.
These results relate Hausdorff metric, the minimum-matching metric, and
the volume of the set-theoretic difference of polytopes.
These relationships prove crucial in obtaining explicit
posterior contraction rates. Here, we state the properties
and prove the results for $p$-dimensional polytopes and convex bodies
of points in $\Delta^d$, for a given $p \leq d$. (Convex bodies are
bounded convex sets that may have an unbounded number of extreme points.
Within this section, the detail of the ambient space is irrelevant. For
instance,
$\Delta^d$ may be replaced by $\real^{d+1}$ or a higher dimensional
Euclidean space.)

\renewcommand{\theProperty}{A\arabic{Property}}
\begin{Property} [(Property of thick body)]\label{propA1} For some $r,R > 0$,
$\vectheta_c \in\Delta^{d}$,
$G$ contains the spherical ball $B_{\d}(\vectheta_c,r)$ and is
contained in
$B_{\d}(\vectheta_c,R)$.
\end{Property}

\begin{Property}[(Property of non-obtute corners)]\label{propA2}
For some small $\delta> 0$, at each vertex of $G$ there is a supporting
hyperplane whose angle formed with any edges adjacent to that vertex is
bounded from below by $\delta$.
\end{Property}

We state key geometric lemmas that will be used throughout the paper.
Bounds such as those given by Lemma~\ref{Lem-diff-1}
are probably well-known in the folklore of convex
geometry (for instance, part (b) of that lemma is similar to (but not
precisely the same as)
Lemma~2.3.6. from \cite{Schneider-93}).
Due to the absence of direct references, we include the proof of this and
other lemmas in the \hyperref[appA]{Appendix}.
%

\begin{lemma}
\label{Lem-M}
\begin{enumerate}[(b)]
\item[(a)] $d_{\mathcal{H}}(G,G') \leq d_{\mathcal{M}}(G,G')$.

\item[(b)] If the two polytopes $G,G'$ satisfy Property \textup{\ref{propA2}},
then $d_{\mathcal{M}}(G,G') \leq C_0 d_{\mathcal{H}}(G,G')$, for
some positive constant $C_0 > 0$ depending only on $\delta$.
\end{enumerate}
\end{lemma}

According to part (b) of this lemma, convergence of a sequence of
convex polytope $G \in\Gcal^k$ to $G_0 \in\Gcal^k$ in
Hausdorff metric
entails the convergence of the extreme points of $G$ to
those of $G_0$. Moreover, they share the same rate as
the Hausdorff convergence.

\begin{lemma}
\label{Lem-diff-1}
There are positive constants $C_1$ and $c_1$ depending
only on $r,R,\d$ such that for any two $\d$-dimensional
convex bodies $G,G'$ satisfying Property \textup{\ref{propA1}}:
\begin{enumerate}[(b)]
\item[(a)] $\vol_{\d} G \bigtriangleup G' \geq c_1d_{\mathcal
{H}}(G,G')^{\d}$.
\item[(b)] $\vol_{\d} G \bigtriangleup G' \leq C_1d_{\mathcal{H}}(G,G')$.
\end{enumerate}
\end{lemma}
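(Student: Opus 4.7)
The plan is to treat the two bounds separately: part (a) via an inscribed-cone argument that extracts a $\d$-dimensional wedge of $G$ near a witness point of the Hausdorff distance, and part (b) via a Steiner/Minkowski-type bound on the tubular thickening of a convex body.

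For part (a), let $\epsilon = \dH(G, G')$. By compactness and the $\max$-characterization of $\dH$, without loss of generality there exists $\vectheta \in G$ with $d(\vectheta, G') \geq \epsilon$, so that $B_{\d}(\vectheta, \epsilon)$ is disjoint from $G'$ and hence $B_{\d}(\vectheta, \epsilon) \cap G \subseteq G \setminus G'$. Property \AA\ supplies an inscribed ball $B_{\d}(\vectheta_c, r) \subseteq G \subseteq B_{\d}(\vectheta_c, R)$, and by convexity $\conv(\{\vectheta\} \cup B_{\d}(\vectheta_c, r)) \subseteq G$. Near its apex $\vectheta$, this cone has aperture half-angle $\alpha$ with $\sin \alpha \gtrsim r/R$, and a direct integration along rays gives $\vol_{\d}(B_{\d}(\vectheta, \epsilon) \cap G) \gtrsim (r/R)^{\d - 1} \epsilon^{\d}$ whenever $\epsilon \leq r$. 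For $\epsilon > r$, which is bounded by $\diam(\Delta^d)$, the same cone computation applied with $\epsilon$ replaced by $r$ yields a uniform positive lower bound; choosing $c_1$ small enough makes this dominate $c_1 \epsilon^{\d}$ throughout the remaining range.

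For part (b), the definition of Hausdorff distance gives $G \bigtriangleup G' \subseteq (G_\epsilon \setminus G) \cup (G'_\epsilon \setminus G')$, so it suffices to show $\vol_{\d}(K_\epsilon \setminus K) \lesssim \epsilon$ uniformly over convex bodies $K$ satisfying \AA. By the classical Steiner formula,
\[\vol_{\d}(K_\epsilon) - \vol_{\d}(K) \;=\; \sum_{i=1}^{\d} \binom{\d}{i} W_i(K)\, \epsilon^{i},\]
where $W_i(K)$ are the quermassintegrals of $K$. Monotonicity of $W_i$ under inclusion together with $K \subseteq B_{\d}(\vectheta_c, R)$ gives $W_i(K) \leq W_i(B_{\d}(\vectheta_c, R)) = \omega_{\d} R^{\d - i}$, and summing yields the right-hand side at most $\omega_{\d}\bigl[(R + \epsilon)^{\d} - R^{\d}\bigr] \leq C_1 \epsilon$ once $\epsilon$ is controlled by $\diam(\Delta^d)$. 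Applying this with $K = G$ and $K = G'$ and summing concludes.

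The main obstacle is bookkeeping rather than conceptual. In part (a) the degenerate regime $\epsilon > r$ must be segregated, since a cone of height exceeding $r$ no longer extends linearly inside $G$ all the way to its base; the workaround relies on the ambient boundedness of $\Delta^d$. In part (b) one may bypass Steiner entirely by bounding the surface measure of $\bd K$ via the fact that the nearest-point projection onto a convex body is $1$-Lipschitz, and integrating over the tubular neighborhood, which would give the same linear-in-$\epsilon$ bound by a more elementary route.
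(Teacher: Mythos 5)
Your proof is correct, and while part (a) matches the paper's argument essentially step for step (pick a boundary point of one body at Hausdorff distance from the other, intersect a ball around it with the tangent cone to the inscribed ball, and extract a wedge of volume $\gtrsim \epsilon^{\d}$ from the aperture bound $\sin\alpha \geq r/R$), your part (b) takes a genuinely different route. The paper proves part (b) by showing the homothetic inclusion $G_\epsilon - \vectheta_c \subseteq (1+\epsilon R/r^2)(G-\vectheta_c)$ and then bounding $\vol_{\d}(G_\epsilon\setminus G) \leq [(1+\epsilon R/r^2)^{\d}-1]\vol_{\d} G \lesssim \epsilon$, an argument that uses both the inscribed ball (for the $1/r$) and the circumscribed ball (for $R$). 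You instead invoke the Steiner formula together with monotonicity of the quermassintegrals under the inclusion $K \subseteq B_{\d}(\vectheta_c,R)$, which needs only the circumradius. Your route is marginally more general in that it does not depend on the inradius at all, at the cost of importing a named result from convex geometry where the paper is self-contained; both deliver the same $O(\epsilon)$ bound. The care you take in part (a) to separate out the regime $\epsilon > r$ and handle it via ambient boundedness is a genuine improvement in rigor: the paper asserts the $c_1\epsilon^{\d}$ bound without addressing that edge case, and your observation that $c_1$ can simply be shrunk using $\epsilon \leq \diam(\Delta^d)$ is exactly what a careful reader would need to supply.
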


\comment{
%
\begin{lemma}
\label{Lem-diff-2}
If convex bodies $G,G'$ in $\real^d$ satisfy property \AA.
In addition, either $G$ or $G'$ is smooth. Then,
\[
\vol_{\d} G \bigtriangleup G' \geq c(r,d)
d_{\mathcal{H}}\bigl(G,G'\bigr)^{(d+1)/2}.
\]
\end{lemma}
\begin{pf}
Without loss of generality, we can assume that $G$ is a spherical ball.
\end{pf}
}
\begin{Remark*} The exponents in both bounds in Lemma~\ref{Lem-diff-1}
are attainable. Indeed, for the lower bound in part (a), consider a
fixed convex polytope $G$. For each vertex $\vectheta_i \in G$,
consider point $x$ that lie on edges incident to $\vectheta_i$ such
that $\|x-\vectheta_i\| = \epsilon$. Let $G'$ be the convex hull of all
such $x$'s and the remaining vertices of $G$.
Clearly, $d_{\mathcal{H}}(G,G') = \mathrm{O}(\epsilon)$, and
$\vol_\d G\setminus G' \leq \mathrm{O}(\epsilon^{\d})$.
Thus, for the collection of convex polytopes $G'$ constructed in this way,
$\vol_\d(G\bigtriangleup G') \asymp d_{\mathcal{H}}(G,G')^{\d}$.
The upper bound in part (b) is also tight for a broad class of
convex polytopes, as exemplified by the following lemma.
\end{Remark*}

\begin{lemma}
\label{Lem-diff-3}
Let $G$ be a fixed polytope and $|\extr G| = k < \infty$.
$G'$ an arbitrary polytope in $\Gcal^*$.
Moreover, either one of the following conditions holds:
\begin{enumerate}[(b)]
\item[(a)] $|\extr G'| = k$, or
\item[(b)] The pairwise distances between
the extreme points of $G'$ is bounded away from a constant $r_0>0$.
\end{enumerate}
Then, there is a positive constant $\epsilon_0 = \epsilon_0(G)$
depending only on $G$,
a positive constant $c_2 = c_2(G)$ in case \textup{(a)} and $c_2 = c_2(G, r_0)$
in case \textup{(b)},
such that
\[
\vol_\d G \bigtriangleup G' \geq c_2
d_{\mathcal{H}}\bigl(G,G'\bigr)
\]
as soon as $d_{\mathcal{H}}(G,G') \leq\epsilon_0(G)$.
\end{lemma}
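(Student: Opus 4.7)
The plan is to combine a vertex-matching argument with a local volume estimate near a vertex of maximum displacement, leveraging the support-function characterization of $\dH$. All constants may depend on $G$ (and, in case (b), on $r_0$) since $G$ is fixed.

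Step 1 (\emph{matching}). For $\epsilon := \dH(G,G')$ sufficiently small, I would construct an injection $\phi : \extr G \to \extr G'$ with $\|v - \phi(v)\| \leq C_1 \epsilon$. For each $v_i \in \extr G$, pick a witness direction $u_i$ that uniquely maximizes $\langle \cdot, u_i\rangle$ on $G$ at $v_i$. From $h_{G'}(u_i) \geq h_G(u_i) - \epsilon$ and $\extr G' \subset G_\epsilon$, the $u_i$-maximizing vertex $v'_i \in \extr G'$ must lie within $O(\epsilon)$ of the ``near-maximizer'' region of $G$ around $v_i$; uniqueness forces this region to shrink linearly to $\{v_i\}$, so $\|v_i - v'_i\| \leq C_1 \epsilon$. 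Injectivity of $v_i \mapsto v'_i$ follows from the $r_G$-separation of $\extr G$ (intrinsic to the fixed $G$) together with either $|\extr G'| = k$ (case (a)) or the $r_0$-separation of $\extr G'$ (case (b)).

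Step 2 (\emph{max displacement}). Using $\dH(G,G') = \max_{\|u\|=1} |h_G(u) - h_{G'}(u)|$, pick (WLOG) $u_0$ with $h_G(u_0) - h_{G'}(u_0) = \epsilon$, and let $v^* \in \extr G$ maximize $\langle \cdot, u_0\rangle$ on $G$. Then
\[
\epsilon = h_G(u_0) - h_{G'}(u_0) \leq \langle v^* - \phi(v^*), u_0\rangle \leq \|v^* - \phi(v^*)\|.
\]

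Step 3 (\emph{local volume estimate}). Write $v' := \phi(v^*)$ and $\Delta := \|v^* - v'\| \in [\epsilon, C_1\epsilon]$. Choose $\rho > 0$ as a small constant depending on $G$ (resp.\ on $G$ and $r_0$ in case (b)). For $\epsilon$ small, the ball $B(v^*,\rho)$ contains no vertex of $G$ other than $v^*$ and no vertex of $G'$ other than $v'$ (by the matching plus the $r_G$- and $r_0$-separations), and lies strictly inside the ``star'' neighborhoods of $v^*$ in $G$ and of $v'$ in $G'$. Hence $G \cap B$ equals the tangent cone of $G$ at $v^*$ restricted to $B$, and $G' \cap B$ equals the tangent cone of $G'$ at $v'$ restricted to $B$. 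For $\epsilon$ small, the matching identifies the two tangent cones combinatorially, with facet normals differing by at most $O(\epsilon)$. A direct computation (archetypal 2D case: two copies of the first quadrant whose apices differ by $\Delta$ have symmetric difference of area $\sim \Delta \cdot \rho$ inside a ball of radius $\rho$) yields
\[
\vol_p \bigl( (G\cap B) \bigtriangleup (G' \cap B) \bigr) \;\gtrsim\; \Delta \cdot \rho^{p-1} - O(\epsilon \cdot \rho^{p}),
\]
where the leading term captures the apex-offset contribution and the error accounts for the angular perturbation between the two cones. Taking $\rho$ small enough that the error is strictly dominated by the main term gives $\vol_p(G\bigtriangleup G') \geq c_2 \dH(G,G')$, with $c_2 \asymp \rho^{p-1}$ depending on $G$ (and on $r_0$ in case (b)).

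The main obstacle is Step 3: one must choose $\rho$ carefully so that (i) each of $G, G'$ agrees with its tangent cone inside $B(v^*,\rho)$, (ii) the two tangent cones are combinatorially identified via the global matching, and (iii) the angular-perturbation correction is dominated by the apex-offset contribution. In case (a) the bijective matching makes (ii) essentially immediate; in case (b), where $G'$ may have more extreme points than $G$, one must additionally rule out unmatched vertices of $G'$ entering $B(v^*,\rho)$, which is precisely where the $r_0$-separation enters.
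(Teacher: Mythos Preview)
Your outline is plausible but takes a genuinely different route from the paper. Both arguments match vertices first (you via support functions; the paper via Lemma~\ref{Lem-M}(b)) and isolate a pair $(\vectheta_1,\vectheta'_1)$ with $\|\vectheta_1-\vectheta'_1\|\geq\epsilon$. The divergence is in the volume extraction. Instead of comparing tangent cones inside a small ball, the paper constructs an explicit pyramid lying in $G\setminus G'$ (or $G'\setminus G$). In case (i) $\vectheta'_1\in G$: pick a facet $S\ni\vectheta_1$ of $G$ whose supporting hyperplane is at distance $\geq \epsilon r/R$ from $\vectheta'_1$; shrink $S$ homothetically toward $\vectheta_1$ by a fixed ratio so the shrunken base $B$ avoids every matched $\vectheta'_j$; then the pyramid with apex $\vectheta'_1$ and base $B$ has relative interior disjoint from $G'$, and $\vol_p Q\geq \frac{1}{p}(\epsilon r/R)\vol_{p-1}B\geq c(G)\epsilon$. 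Cases $\vectheta'_1\notin G$ are symmetric. This buys simplicity: $\vol_{p-1}B$ is a constant of $G$ alone, so there is no competing angular-perturbation error to dominate, and one never needs $G'\cap B(v^*,\rho)$ to equal a tangent cone.

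Your Step~3 is where the real content lies, and as you acknowledge it is incomplete. Two concrete gaps. First, the lower bound $\gtrsim\Delta\rho^{p-1}$ for the apex-offset term needs a proof in dimension $p$: it rests on the fact that the tangent cone at $v^*$ is pointed, so its facet normals span $\real^p$ and no translation direction is simultaneously parallel to all facets---but your 2D example does not establish this. Second, and more seriously for case~(b), the claim that the two tangent cones are ``combinatorially identified with facet normals differing by $O(\epsilon)$'' can fail: $G'$ may carry unmatched vertices sitting near edges or higher faces of $G$ (this is allowed, as only pairwise separation within $\extr G'$ is assumed), and such vertices can be neighbors of $v'$ in $G'$, producing extra facets through $v'$ that have no counterpart in the cone at $v^*$. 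The $r_0$-separation keeps those unmatched vertices out of $B(v^*,\rho)$ for small $\rho$, but it does not by itself control the combinatorics of the tangent cone at $v'$. The paper's pyramid construction sidesteps this because it uses a single facet of the fixed $G$ and only needs the matched $\vectheta'_j$ to stay clear of the shrunken base.
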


\begin{Remark*} We note that the bound obtained in this lemma is
substantially stronger than the one obtained by Lemma~\ref{Lem-diff-1}
part (a).
This is due to the asymmetric roles of $G$, which is held fixed, and $G'$,
which can vary. As a result, constant $c_2$
as stated in the present lemma is independent of $G'$ but allowed to be
dependent on $G$.
By contrast, constant
$c_1$ in Lemma~\ref{Lem-diff-1} part (a) is independent of both $G$
and $G'$.
\end{Remark*}
\section{An abstract posterior contraction theorem}
\label{Sec-gen}

In this section, we state an abstract posterior contraction theorem for
hierarchical
models, whose proof is given in the \hyperref[appB]{Appendix}. The setting
of this theorem is a general hierarchical model defined as follows
\begin{eqnarray*}
G &\sim&\Pi, \qquad \veceta_1,\ldots,\veceta_m | G \sim
P_{\veceta|G},
\\
\SS_{[n]}^{i} | \veceta_i &\sim& P_{\SS_{[n]}|\veceta_i}
\qquad \mbox{for } i=1,\ldots , m.
\end{eqnarray*}
The detail of conditional distributions
in above specifications
is actually irrelevant. Thus, results in this section may be of general interest
for hierarchical models with $m\times n$ data.

As before ${p_{\SS_{[n]}|G}}$ is marginal density of the
generic $\SS_{[n]}$ which is obtained by integrating
out the generic random vector $\veceta$ (e.g., see Equation \eqref
{Eqn-marginal-SSn}).
We need several key notions. Define the Hausdorff ball as:
\[
B_{d_{\mathcal{H}}}(G_1,\delta) := \bigl\{G \in\Delta^{d}\dvt
d_{\mathcal
{H}}(G_1,G) \leq\delta\bigr\}.
\]
A useful quantity for proving posterior concentration theorems
is the Hellinger information of Hausdorff metric for a given set:

\begin{definition}
\label{Def-Hellinger}
Fix $G_0 \in\Gcal^*$.
For a fixed $n$, the sample size of $\SS_{[n]}$,
define the Hellinger information of $d_{\mathcal{H}}$ metric for set
$\Gcal
\subset\Gcal^*$
as a real-valued function
on the positive reals $\Psi_{\Gcal,n}\dvtx \real_+ \rightarrow\real$:
%
\begin{equation}
\label{Eqn-H} \Psi_{\Gcal,n}(\delta) := \inf_{G\in\Gcal; d_{\mathcal
{H}}(G_0,G) \geq
\delta/2}
h^2(p_{\SS_{[n]}|G_0}, p_{\SS_{[n]}|G}).
\end{equation}
\end{definition}

We also define $\Phi_{\Gcal,n}\dvtx \real_+ \rightarrow\real$
to be an arbitrary non-negative valued function on the positive reals
such that for any $\delta> 0$,
\[
\sup_{G,G' \in\Gcal; d_{\mathcal{H}}(G,G') \leq\Phi_{\Gcal
,n}(\delta)} h^2({p_{\SS_{[n]}|G}},
p_{\SS_{[n]}|G'}) \leq\Psi _{\Gcal
,n}(\delta)/4.
\]
In both definitions of $\Phi$ and $\Psi$, we suppress the dependence
on (the fixed) $G_0$
to simplify notations.
Note that if $G_0 \in\Gcal$, it follows from
the definition that $\Phi_{\Gcal,n}(\delta) < \delta/2$.
%
\begin{Remark*}\label{rem}
Suppose that conditions of Lemma~\ref{Lem-W}(b) hold, so that
\begin{eqnarray*}
h^2(p_{\SS_{[n]}|G},p_{\SS_{[n]}|G'}) \leq K(p_{\SS_{[n]}|G},p_{\SS
_{[n]}|G'})
\leq\frac{n}{c_0}C_0d_{\mathcal{H}}\bigl(G,G'
\bigr).
\end{eqnarray*}
Then it suffices to choose $\Phi_{\Gcal,n}(\delta) = \frac{c_0}{4 n
C_0}\Psi_{\Gcal,n}(\delta)$.
\end{Remark*}

Define the neighborhood of the prior support around $G_0$ in terms of
Kullback--Leibler
distance of the marginal densities ${p_{\SS_{[n]}|G}}$:
%
\begin{equation}
\label{Eqn-kl-neighborhood} B_K(G_0,\delta) = \bigl\{G \in
\mathcal{G}^*| K({p_{\SS_{[n]}
|G_0}},{p_{\SS_{[n]}|G}})
\leq\delta^2; K_2({p_{\SS_{[n]}|G_0}},
{p_{\SS_{[n]}|G}}) \leq\delta ^2 \bigr\}.
\end{equation}

\begin{theorem}
\label{Thm-Gen}
Let $\Gcal$ denote the support of the prior $\Pi$. Fix $G_0 \in\Gcal$
and suppose that
\begin{enumerate}[(b)]
\item[(a)] $m \rightarrow\infty$ and $n\rightarrow\infty$ at a certain
rate relative to $m$.
\item[(b)] There is a large constant $C$,
a sequence of scalars $\epsilon_{m,n}\rightarrow0$ defined in terms
of $m$
and $n$ such that $m\epsilon_{m,n}^2$ tends to infinity,
such that
%
\begin{eqnarray}
\label{Eqn-entropy-2} && \sup_{G_1\in\Gcal} \log D\bigl(
\Phi_{\Gcal,n}(\epsilon), \Gcal\cap B_{d_{\mathcal{H}}}(G_1,
\epsilon/2), d_{\mathcal{H}}\bigr)\nonumber
\\
&&\hphantom{\sup_{G_1\in\Gcal}}{} + \log D\bigl(\epsilon/2, \Gcal\cap B_{d_{\mathcal{H}}}(G_0, 2
\epsilon) \setminus B_{d_{\mathcal{H}}}(G_0,\epsilon), d_{\mathcal{H}}
\bigr) \leq m\epsilon_{m,n}^2
\\
&&\quad  \forall\epsilon\geq\epsilon_{m,n},
\nonumber
\\
\label{Eqn-support-3} && \Pi\bigl(B_K(G_0,
\epsilon_{m,n})\bigr) \geq\exp\bigl[-m\epsilon_{m,n}^2C
\bigr].  %
\end{eqnarray}
\item[(c)] There is a sequence of positive scalars $M_m$ such that
%
\begin{eqnarray}
\label{Eqn-support-2} & \Psi_{\Gcal,n}(M_m\epsilon_{m,n})
\geq8\epsilon_{m,n}^2(C+4),&
\\
\label{Eqn-rate-cond} & \exp\bigl(2m\epsilon_{m,n}^2\bigr)\displaystyle\sum
_{j\geq M_m} \exp\bigl[-m\Psi_{\Gcal
,n}(j
\epsilon_{m,n})/8\bigr] \rightarrow0.&
\end{eqnarray}
\end{enumerate}

Then, $\Pi(G\dvt  d_{\mathcal{H}}(G_0,G) \geq M_m\epsilon_{m,n}|\SS_{[n]}^{[m]})
\rightarrow0$ in $P_{\SS_{[n]}|G_0}^m$-probability as $m$ and
$n\rightarrow\infty$.\vadjust{\goodbreak}
\end{theorem}

Condition \eqref{Eqn-entropy-2} is referred to as entropy condition
for certain sets in the support of the prior. Condition \eqref{Eqn-support-3}
is concerned with the ``thickness'' of the prior as measured by
the Kullback--Leibler distance (see also \cite{Ghosal-Ghosh-vanderVaart-00}).
Conditions \eqref{Eqn-support-2}
and \eqref{Eqn-rate-cond} are related to the Hellinger information
function (see also \cite{Nguyen-11}). The proof of this theorem is
deferred to the \hyperref[appB]{Appendix}. As noted above, this result is applicable
to any hierarchical models for $m\times n$ data. The choice of Hausdorff
metric $d_{\mathcal{H}}$ is arbitrary here, and can be replaced by any
other valid
metric (e.g., $d_{\mathcal{M}}$).
The remainder of the paper is devoted to verifying the conditions of this
theorem so it can be applied. These conditions hinge
on our having established a lower bound for the Hellinger
information function $\Psi_{\Gcal,n}(\cdot)$ (via Theorem~\ref{Thm-C}),
and a lower bound for the prior probability defined on Kullback--Leibler balls
$B_K(G_0,\cdot)$
(via Theorem~\ref{Thm-KL}). Both types of results
are obtained by utilizing the convex geometry lemmas
described in the previous section.

\section{Inequalities for the Hausdorff distance}
\label{Sec-contraction}

The following results guarantee that as marginal densities of $\SS
_{[n]}$ get
closer in total variation distance metric (or Hellinger metric), so do
the corresponding
population polytopes in Hausdorff metric (or minimum matching metric).
This gives a lower bound for the Hellinger information defined by Equation
\eqref{Eqn-H},
because $h$ is related to $V$ via inequality $h \geq V$.

\begin{theorem}
\label{Thm-C}
\begin{enumerate}[(b)]
\item[(a)] Let $G,G'$ be two convex bodies in $\Delta^d$.
$G$ is a $p$-dimensional body containing spherical ball
$B_{p}(\vectheta_c,r)$,
while
$G'$ is $p'$-dimensional body containing $B_{p'}(\vectheta_c,r)$
for some $p,p' \leq d, r> 0, \vectheta_c \in\Delta^d$.
In addition, assume that both $p_{\veceta|G}$ and $p_{\veceta|G'}$
are $\alpha
$-regular densities
on $G$ and $G'$, respectively. Then, there is $c_1>0$ independent
of $G,G'$ such that
\[
c_1 d_{\mathcal{H}}\bigl(G,G'\bigr)^{(p \vee p') + \alpha}
\leq V( {p_{\SS_{[n]}
|G}},{p_{\SS_{[n]}|G'}}) + 6(d+1)
\exp \biggl[-\frac{n}{8(d+1)}d_{\mathcal{H}}\bigl(G,G'
\bigr)^2 \biggr].
\]

\item[(b)] Assume further that $G$ is fixed convex polytope, $G'$ an arbitrary
polytope, $p'= p$, and that either
$|\extr G'| = |\extr G|$
or the pairwise distances of extreme points of $G'$ is bounded
from below by a constant $r_0 > 0$. Then, there are constants $c_2,C_3>0$
depending only on $G$ and $r_0$ (and independent of $G'$) such that
\[
c_2 d_{\mathcal{H}}\bigl(G,G'\bigr)^{1+\alpha}
\leq V({p_{\SS
_{[n]}|G}}, {p_{\SS_{[n]}|G'}}) + 6(d+1)
\exp \biggl[-\frac{n}{C_3(d+1)}d_{\mathcal{H}}\bigl(G,G'
\bigr)^2 \biggr].
\]
\end{enumerate}
\end{theorem}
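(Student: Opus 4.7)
Set $\epsilon := \dH(G,G')$. The plan is to produce an explicit test event $E$ in the sample space, driven by the multinomial empirical vector $\hatveceta(\SSn) := \bigl(n^{-1}\sum_{j}\indicator(X_j=l)\bigr)_{l=0}^{d}$, and bound
\[V(\pG,\pGO) \;\geq\; P_{\SSn|G}(E) - P_{\SSn|G'}(E)\]
from below. Three ingredients feed this split: a geometric witness of the Hausdorff gap between $G$ and $G'$, the $\alpha$-regularity of $\pGeta$ and $\pGOeta$ near the boundary, and Hoeffding-style concentration of $\hatveceta$ around $\veceta$.

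\textbf{Step 1 (witness and $\veceta$-level mass).} By definition of $\dH$, at least one of $\sup_{\vectheta\in G}d(\vectheta,G')$, $\sup_{\vectheta'\in G'}d(\vectheta',G)$ is attained and equals $\epsilon$; WLOG the first, yielding $\veceta_0\in\bd G$ with $d(\veceta_0,G')\geq\epsilon$. The symmetric case is identical after swapping $G,G'$, which is the origin of the $p\vee p'$ in part (a). Since $G$ contains $B_{p}(\vectheta_c,r)$, the convex hull of $\{\veceta_0\}$ with $B_{p}(\vectheta_c,r/2)$ lies inside $G$, and a cone computation shows $\vol_p(G\cap B(\veceta_0,\epsilon/8))\gtrsim \epsilon^{p}$ once $\epsilon$ is small in terms of $r,R$. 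Applying $\alpha$-regularity of $\pGeta$ at $\veceta_0$ with radius $\epsilon/8$ then yields
\[\Peta\bigl(B(\veceta_0,\epsilon/8)\cap G\bigr) \;\gtrsim\; \epsilon^{p+\alpha}.\]

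\textbf{Step 2 (sample-level test and concentration).} Define $E := \{\SSn : d(\hatveceta(\SSn),G') > \epsilon/4\}$. Coordinatewise Hoeffding plus a union bound over the $d+1$ categories gives
\[P_{\SSn|\veceta}\bigl(\|\hatveceta-\veceta\| > \epsilon/4\bigr) \;\leq\; 2(d+1)\exp\bigl(-n\epsilon^2/(8(d+1))\bigr).\]
For $\veceta\in B(\veceta_0,\epsilon/8)\cap G$ we have $d(\veceta,G')\geq 7\epsilon/8$, so $\|\hatveceta-\veceta\|\leq \epsilon/4$ forces $\SSn\in E$; integrating against $\Peta$ produces
\[P_{\SSn|G}(E)\;\gtrsim\;\epsilon^{p+\alpha}\Bigl[1 - 2(d+1)\exp\bigl(-n\epsilon^2/(8(d+1))\bigr)\Bigr].\]
Conversely, for $\veceta\in G'$ the inequality $d(\hatveceta,G')\leq\|\hatveceta-\veceta\|$ gives $P_{\SSn|G'}(E)\leq 2(d+1)\exp(-n\epsilon^2/(8(d+1)))$. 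Subtracting the two and absorbing the small prefactor delivers the bound of part (a).

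\textbf{Step 3 (part (b) and main obstacle).} Part (b) uses the same template, only replacing the pointwise ball bound $\vol_p(G\cap B(\veceta_0,\epsilon/8))\gtrsim\epsilon^{p}$ by Lemma~\ref{Lem-diff-3}'s bulk estimate $\vol_p(G\bigtriangleup G')\geq c(G)\,\epsilon$. The nontrivial point is upgrading this bulk volume bound to a probability bound of the form $\Peta(\{\veceta\in G : d(\veceta,G')\geq c\epsilon\})\gtrsim \epsilon^{1+\alpha}$, since $\alpha$-regularity as stated only controls balls centered on $\bd G$. I plan to address this by inspecting the proof of Lemma~\ref{Lem-diff-3}: under either of its hypotheses the symmetric difference localizes into wedge-shaped slices near matched vertices of $G$, and each such wedge contains a sub-region of $p$-volume $\gtrsim\epsilon$ that (a) sits at Euclidean distance $\gtrsim\epsilon$ from the opposing polytope and (b) can be covered by $O(1)$ balls centered on $\bd G$, whereupon $\alpha$-regularity converts $\epsilon$-volume into $\epsilon^{1+\alpha}$-probability. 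This localization-and-covering argument is the main obstacle I foresee; once in place, Step 2 applies verbatim to yield part (b).
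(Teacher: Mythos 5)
Your part (a) is correct and runs along the same lines as the paper's proof. The paper constructs its test event by taking $A^* = G\setminus G'_{3\epsilon}$ or $G'\setminus G_{3\epsilon}$ and testing whether $\hatveceta$ falls in the outer parallel set $A^*_\epsilon$, whereas you test $E=\{d(\hatveceta,G')>\epsilon/4\}$ directly; these are the same idea and yours is slightly cleaner to state. The roles of the Hausdorff witness $\veceta_0$, the cone/$\alpha$-regularity bound giving $\Peta\gtrsim\epsilon^{p+\alpha}$ near $\veceta_0$, the Hoeffding concentration of $\hatveceta$ around $\veceta$, and the symmetry argument producing the $p\vee p'$ exponent, all match the paper. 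One small caution: in the symmetric case where the Hausdorff supremum is attained from $G'$, you must test $d(\hatveceta,G)$ instead, and you then get the exponent $p'+\alpha$; it is only the fact that at least one of the two maxima equals $\dH$, and that $\epsilon\le 1$ so the worse of $\epsilon^{p+\alpha}$, $\epsilon^{p'+\alpha}$ is $\epsilon^{(p\vee p')+\alpha}$, that yields the stated exponent. You should make this explicit rather than rely on a WLOG.

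For part (b), you have correctly identified the nontrivial step: Lemma~\ref{Lem-diff-3} gives $\vol_p(A^*)\gtrsim\epsilon$, but $\alpha$-regularity is formulated only for balls centered on $\bd G$, so the passage from an $\epsilon$-volume bound to an $\epsilon^{1+\alpha}$-probability bound requires an extra step. (The paper's own proof is terse here, writing only ``proceeds in the same manner as in part (a),'' so this is a genuine gap that must be filled.) However, your sketch of the fix is not quite right: ``covered by $O(1)$ balls centered on $\bd G$'' will not produce the desired bound. A single ball of radius $\epsilon$ centered on $\bd G$ and contained in the slab $A^*$ has $\vol_p$ at most of order $\epsilon^p$, so $\alpha$-regularity gives it $\Peta$-mass of order $\epsilon^{p+\alpha}$; $O(1)$ such balls yield only $O(\epsilon^{p+\alpha})$, which is weaker than $\epsilon^{1+\alpha}$ whenever $p>1$. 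Conversely, a ball of radius $O(1)$ would not fit inside $A^*$, and covering $A^*$ by such balls gives an upper, not a lower, bound on $\Peta(A^*)$. The correct fill is a \emph{packing}, not a covering: because $G$ is a fixed polytope satisfying (\AA) and (\A), the face of $G$ adjacent to $A^*$ has $(p-1)$-volume bounded below by a constant depending only on $G$, so one can place $\asymp\epsilon^{-(p-1)}$ pairwise disjoint balls of radius $\asymp\epsilon$ centered on $\bd G$ and all lying inside $A^*$. Summing $\alpha$-regularity over these disjoint balls yields $\Peta(A^*)\gtrsim\epsilon^{-(p-1)}\cdot\epsilon^{p+\alpha}=\epsilon^{1+\alpha}$, which is what you need. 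With that substitution your Step 2 machinery then applies verbatim.
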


\begin{Remark*} Part (a) holds for varying pairs of $G,G'$ satisfying
certain conditions. It is consequence of Lemma~\ref{Lem-diff-1}(a).
Part (b)
produces a tighter bound, but it
holds only for a fixed $G$, while $G'$ is allowed to vary
while satisfying certain conditions. This is a consequence of Lemma~\ref{Lem-diff-3}.
Constants $c_1, c_2$ are the same as those from Lemmas \ref
{Lem-diff-1}(a)
and \ref{Lem-diff-3}, respectively.
\end{Remark*}
\begin{pf*}{Proof of Theorem~\ref{Thm-C}}
(a) The main idea of the proof is the construction of a suitable test set
in order to distinguish ${p_{\SS_{[n]}|G'}}$ from
$
{p_{\SS_{[n]}|G}}$. The proof
is organized as a sequence of steps.

\noindent\textit{Step 1}.
Given a data vector $\SS_{[n]}= (X_1,\ldots,X_n)$, define $\hatveceta
(\SS
) \in\Delta^d$ such that
the $i$-element of $\hatveceta(\SS)$ is $\frac{1}{n}\sum_{j=1}^{n}\indicator(X_j=i)$
for each $i=0,\ldots,d$. In the following, we simply use $\hatveceta$
to ease the notations.
By the definition of the variational distance,
%
\begin{equation}
\label{Eqn-V} V({p_{\SS_{[n]}|G}},{p_{\SS_{[n]}|G'}})
= \sup_{A} \bigl |P_{\SS_{[n]}|G}(\hatveceta\in A) -
P_{\SS_{[n]}|G'}(\hatveceta\in A)\bigr |,
\end{equation}
where the supremum is taken over all measurable subsets of $\Delta^d$.

\noindent\textit{Step 2}. Fix a constant $\epsilon> 0$. By Hoeffding's
inequality and the union bound,
under the conditional distribution $P_{\SS_{[n]}|\veceta}$,
\[
P_{\SS_{[n]}|\veceta}\Bigl(\max_{i=0,\ldots,d}|\hat{\eta}_i -
\eta_i| \geq \epsilon\Bigr) \leq2(d+1)\exp\bigl(-2n
\epsilon^2\bigr)
\]
with probability one (as $\veceta$ is random).
It follows that
\begin{eqnarray*}
P_{\veceta\times\SS_{[n]}|G}\bigl(\|\hatveceta- \veceta\| \geq\epsilon\bigr) & \leq&
P_{\veceta\times\SS_{[n]}|G}\Bigl(\max_{i=0,\ldots,d}|\hat{\eta}_i -
\eta_i| \geq\epsilon (d+1)^{-1/2}\Bigr)
\\
& \leq& 2(d+1)\exp\bigl[-2n\epsilon^2/(d+1)\bigr].
\end{eqnarray*}
The same bound holds under $P_{\veceta\times\SS_{[n]}|G'}$.

\noindent\textit{Step 3}. Define event $B = \{\|\hatveceta- \veceta\| <
\epsilon\}$.
Take any (measurable) set $A \subset\Delta^d$,
%
\begin{eqnarray}\label{Eqn-Vb}
&&\bigl  |P_{\SS_{[n]}|G}(\hatveceta\in A) - P_{\SS_{[n]}|G'}(\hatveceta \in A)\bigr |
\nonumber
\\
&&\quad  =  \bigl |P_{\veceta\times\SS_{[n]}|G}(\hatveceta\in A ; B) + P_{\veceta\times\SS_{[n]}|G}\bigl(
\hatveceta\in A ; B^C\bigr)
\nonumber
\\
&&\hphantom{\quad  =  |}{} - P_{\veceta\times\SS_{[n]}|G'}(\hatveceta\in A ; B) - P_{\veceta\times\SS_{[n]}|G'}\bigl(\hatveceta
\in A ; B^C\bigr)\bigr |
\\
& &\quad \geq \bigl |P_{\veceta\times\SS_{[n]}|G}(\hatveceta\in A ; B) - P_{\veceta
\times\SS_{[n]}|G'}(\hatveceta
\in A ; B)\bigr |
\nonumber
\\
&&\qquad {} - 4(d+1)\exp\bigl[-2n\epsilon^2/(d+1)\bigr]. \nonumber
\end{eqnarray}

\noindent\textit{Step 4}. Let $\epsilon_1 = d_{\mathcal{H}}(G,G')/4$. For
any $\epsilon
\leq\epsilon_1$, recall
the outer $\epsilon$-parallel set $G_{\epsilon} = (G + B_{d+1}(\vec
{0},\epsilon))$,
which is full-dimensional ($d+1$) even though $G$ may not be.
By triangular inequality, $d_{\mathcal{H}}(G_{\epsilon},G_{\epsilon
}') \geq d_{\mathcal{H}}
(G,G')/2$.
We shall argue that for any $\epsilon\leq\epsilon_1$,
there is a constant $c_1 > 0$ independent of $G,G'$, $\epsilon$ and
$\epsilon_1$ such that either one of the two scenarios
holds:
\begin{enumerate}[(ii)]
\item[(i)] There is a set $A^* \subset G \setminus G'$ such that
$A^*_{\epsilon} \cap G_\epsilon'= \emptyset$ and $\vol_p(A^*) \geq
c_1 \epsilon_1^p$, or
\item[(ii)] There is a set $A^* \subset G' \setminus G$ such that
$A^*_\epsilon\cap G_\epsilon= \emptyset$ and $\vol_{p'}(A^*) \geq
c_1 \epsilon_1^{p'}$.
\end{enumerate}

Indeed, since $\epsilon\leq d_{\mathcal{H}}(G,G')/4$, either
one of the following two inequalities holds:
$d_{\mathcal{H}}(G \setminus G_{3\epsilon}', G') \geq d_{\mathcal
{H}}(G,G')/4$ or $d_{\mathcal{H}}(G'
\setminus G_{3\epsilon},
G) \geq d_{\mathcal{H}}(G,G') /4$. If the former inequality
holds, let $A^* = G\setminus G_{3\epsilon}'$.
Then, $A^* \subset G\setminus G'$ and $A^*_\epsilon\cap G_\epsilon'=
\emptyset$.
Moreover, by Lemma~\ref{Lem-diff-1}(a), $\vol_p(A^*) \geq
c_1\epsilon_1^p$, for
some constant $c_1 > 0$ independent of $\epsilon, \epsilon_1, G, G'$, so
$A^*$ satisfies (i).
In fact, using the same argument as in the proof of
Lemma~\ref{Lem-diff-1}(a) there is a point $x \in\bd G$
such that $G' \cap B_{p}(x,\epsilon_1) = \emptyset$.
Combined with the $\alpha$-regularity of $P_{\veceta|G}$, we have
$P_{\veceta|G}(A^*) \geq\epsilon^\alpha\vol_p(G\cap
B_{p}(x,\epsilon_1))
\geq c_1\epsilon^{p+\alpha}$ for some constant $c_1>0$.
If the latter inequality holds, the same argument
applies by defining $A^* = G' \setminus G_{3\epsilon}$ so that (ii) holds.

\noindent\textit{Step 5}.
Suppose that (i) holds for the chosen $A^*$. This means that
$P_{\veceta\times\SS_{[n]}|G'}(\hatveceta\in A^*_\epsilon; B) \leq
P_{\veceta|G'}(\veceta\in
A_{2\epsilon}^*) = 0$,
since $A_{2\epsilon}^* \cap G' = \emptyset$, which is a consequence
of $A_{\epsilon}^* \cap G_\epsilon'= \emptyset$.
In addition,
\begin{eqnarray*}
P_{\veceta\times\SS_{[n]}|G}\bigl(\hatveceta\in A^*_\epsilon; B\bigr) & \geq &
P_{\veceta\times\SS_{[n]}|G}\bigl(\veceta \in A^*; B\bigr)
\\
& \geq& P_{\veceta|G}\bigl(A^*\bigr) - P_{\veceta\times\SS_{[n]}|G}\bigl(B^C
\bigr)
\\
& \geq& P_{\veceta|G}\bigl(A^*\bigr) - 2(d+1)\exp\bigl(-2n
\epsilon^2/(d+1)\bigr)
\\
& \geq& c_1 \epsilon_1^{p+\alpha} - 2(d+1)\exp
\bigl(-2n\epsilon^2/(d+1)\bigr).
\end{eqnarray*}
Hence, by Equation \eqref{Eqn-Vb}
$|P_{\SS_{[n]}|G}(\hatveceta\in A^*_\epsilon) - P_{\SS
_{[n]}|G'}(\hatveceta\in
A^*_\epsilon)|
\geq c_1 \epsilon_1^{p+\alpha} - 6(d+1)\times\break \exp(-2n\epsilon^2)$. Set
$\epsilon=
\epsilon_1$, the conclusion then
follows by invoking Equation \eqref{Eqn-V}. The scenario of (ii) proceeds
in the
same way.

(b) Under the condition that the pairwise distances of extreme points of
$G'$ are bounded from below by $r_0>0$, the proof is very similar
to part (a), by invoking Lemma~\ref{Lem-diff-3}. Under the condition
that $|\extr G'| = k$, the proof is also similar, but it requires
a suitable modification for the existence of set $A^*$.
For any small $\epsilon$, let $\tilde{G}_\epsilon$ be the
minimum-volume homethetic
transformation of $G$, with respect to center $\vectheta_c$, such that
$\tilde{G}_\epsilon$ contains
$G_\epsilon$. Since $B_p(\vectheta_c,r) \subset G \subset
B_p(\vectheta_c,R)$ for
$R=1$, it is simple to see that $d_{\mathcal{H}}(G,\tilde{G}_\epsilon)
\leq\epsilon R/r = \epsilon/r$.
\comment{For any small $\epsilon$, let $\tilde{G}_\epsilon$ be
a convex polytope obtained by the intersection of all $G$-containing
half-spaces supported by the hyperplanes which are parallel to and
of distance $\epsilon$ away from each of $G$'s $p-1$-dimensional faces.
It is simple to see that $\tilde{G}_\epsilon$ has the same number
of extreme points as $G$, and $G_\epsilon\subset\tilde{G}_\epsilon$.
Since $G$ contains a solid $p$-dimensional ball of radius $r>0$,
both $G$ and $\tilde{G}_\epsilon$ do not share sharp corners, so
there exists a constant $C_2 > 0$ independent of $G$ and $\epsilon$
such that the distances of all pairs of corresponding vertices of $G$
and $G'$
are bounded by $C_2\epsilon$. It follows that, by Lemma~\ref{Lem-M},
$d_{\mathcal{H}}(G,\tilde{G}_\epsilon) \leq
C_2\epsilon$. Likewise let $\tilde{G}_\epsilon'$ be associated
with $G'$ in the same manner.}

Set $\epsilon_1 = d_{\mathcal{H}}(G,G')r/4$.
We shall argue that
for any $\epsilon\leq\epsilon_1$, there is a constant $c_0 > 0$
independent of
$G'$, $\epsilon$ and $\epsilon_1$ such that either one of the following
two scenarios hold:
\begin{enumerate}[(iii)]
\item[(iii)] There is a set $A^* \subset G \setminus G'$ such that
$A^*_{\epsilon} \cap G_\epsilon'= \emptyset$ and $\vol_p(A^*) \geq
c_2 \epsilon_1$, or
\item[(iv)] There is a set $A^* \subset G' \setminus G$ such that
$A^*_\epsilon\cap G_\epsilon= \emptyset$ and $\vol_{p}(A^*) \geq
c_2 \epsilon_1$.
\end{enumerate}
Indeed, note that either one of the following two inequalities
holds: $d_{\mathcal{H}}(G\setminus\tilde{G}_{3\epsilon}', G')
\geq d_{\mathcal{H}}(G,G')/4$ or $d_{\mathcal{H}}(G'\setminus\tilde
{G}_{3\epsilon}, G)
\geq d_{\mathcal{H}}(G,G')/4$. If the former inequality holds, let
$A^* = G\setminus\tilde{G}_{3\epsilon}'$. Then,
$A^* \subset G\setminus G'$ and $A^*_\epsilon\cap\tilde
{G}_{\epsilon}' = \emptyset$.
Observe that both $G$ and $\tilde{G}_{3\epsilon}'$ have
the same number of extreme points by the construction. Moreover,
$G$ is fixed so that all geometric Properties \ref{propA2}, \ref{propA1} are satisfied
for both
$G$ and $\tilde{G}_{3\epsilon}'$ for sufficiently
small $d_{\mathcal{H}}(G,G')$. By Lemma~\ref{Lem-diff-3}, $\vol_p(A^*)
\geq c_2 \epsilon_1$. Hence, (iii) holds.
If the latter inequality holds, the same argument applies by
defining $A^* = G' \setminus\tilde{G}_{3\epsilon}$ so that (iv) holds.

Now the proof of the theorem proceeds in the same manner as in part (a).
\end{pf*}

\section{Concentration properties of the prior support}
\label{Sec-concentration}

In this section, we study properties of the support
of the prior probabilities as specified by the admixture model,
including bounds for the support of the prior as defined by
Kullback--Leibler neighborhoods.

\subsection*{\texorpdfstring{$\alpha$}{alpha}-regularity}
Let $\vecbeta$ be a random variable taking values in $\Delta^{k-1}$
that has
a density $p_{\vecbeta}$ (with respect to the $k-1$-dimensional
Hausdorff measure
$\Hcal^{k-1}$ on $\real^k$). For a definition of the Hausdorff
measure, see \cite{Evans-Gariepy-92}, which
in our case reduces to the Lebesgue measure defined on simplex $\Delta^{k-1}$.
Define random variable $\veceta= \beta_1\vectheta_1 + \cdots+ \beta
_k \vectheta_k$, which
takes values in $G = \conv(\vectheta_1,\ldots,\vectheta_k)$.
Write $\veceta= L \vecbeta$, where $L = [\vectheta_1\quad  \cdots\quad
\vectheta_{k}]$
is a $(d+1)\times k$ matrix.
%
If $k \leq d+1$, $\vectheta_1,\ldots,\vectheta_k$ are generally
linearly independent,
in which case matrix $L$ has rank $k-1$.
By the change of variable formula \cite{Evans-Gariepy-92} (Chapter~3),
$P_{\vecbeta}$
induces a distribution $P_{\veceta|G}$ on $G \subset\Delta^{d}$, which
admits the following
density with respect to the $k-1$ dimensional Hausdorff measure $\Hcal^{k-1}$
on $\Delta^{d}$:
%
\begin{equation}
\label{Eqn-density-1} p_{\veceta}(\veceta|G) = p_{\vecbeta}
\bigl(L^{-1}(\veceta)\bigr) J(L)^{-1}.
\end{equation}
Here, $J(L)$ denotes the Jacobian of the linear map.
%
On the other hand, if $k \geq d+1$, then $L$ is generally $d$-ranked.
The induced distribution for $\veceta$ admits
the following density with respect to the $k-(d+1)$-dimensional Hausdorff
measure on $\real^{d+1}$:
%
\begin{equation}
\label{Eqn-density-2} p_{\veceta}(\veceta|G) = \int_{L^{-1}\{\veceta\}}
p_{\vecbeta}(\vecbeta) J(L)^{-1} \Hcal ^{k-(d+1)}(\mathrm{d}\vecbeta).
\end{equation}
\comment{
[[re-write]] Here $J(L)$ denotes the Jacobian of the linear map $L:
\Delta^{k-1}
\rightarrow\Delta^{d}$, $\vecbeta\mapsto\veceta= L\vecbeta$, where
in the last equation, we also use $L$ to denote the $(d+1)\times k$ matrix,
$L = [\vectheta_1 \ldots\vectheta_k]$. When $k \geq d+1$, $L$
generally has rank $d$ (
as assumed).
Let $L^\dagger$ be the pseudo-inverse $k \times(d+1)$ matrix of $L$.
For any $\veceta\in G = \conv(\vectheta_1,\ldots, \vectheta_k)$,
$L^{-1}(\veceta) = \Delta^{k-1} \cap
\{ L^\dagger\veceta+ (I-L^\dagger L) z | z \in\real^k\}$.
Note that $(I-L^\dagger L)$ represents the orthogonal projection from
$\real^k$
onto the null space $\mathcal{N}(L)$, which has $k-d$ dimensions,
so $L^{-1}(\veceta)$ has $k-d-1$ dimensions.
}

A common choice for $P_{\vecbeta}$ is the Dirichlet distribution, as
adopted by \cite{Pritchard-etal-00,Blei-etal-03}:
given parameter $\gamma\in\real_{+}^{k}$, for any $A \subset\Delta^{k-1}$,
\[
P_{\vecbeta} (\vec{\beta} \in A|\gamma) = \int_{A}
\frac{\Gamma(\sum\gamma_j)}{ \prod_{j=1}^{k}\Gamma(\gamma_j)} \prod_{j=1}^{k}
\beta_j^{\gamma_j-1} \Hcal^{k-1}(\mathrm{d}\vecbeta).
\]

\begin{lemma}
\label{Lem-Dirichlet}
Let $\veceta= \sum_{j=1}^{k} \beta_j \vectheta_j$, where $\vecbeta
$ is
distributed according to a $k-1$-dimensional Dirichlet distribution
with parameters $\gamma_j \in(0, 1]$ for $j=1,\ldots,k$.
\begin{enumerate}[(a)]
\item[(a)] If $k\leq d+1$, there is constant $\epsilon_0 = \epsilon
_0(k) >0$,
and constant $c_6 = c_6(\gamma,k,d) > 0$ dependent on $\gamma, k$ and
$d$ such that for
any $\epsilon< \epsilon_0$,
\[
\inf_{G \subset\Delta^d}\inf_{\veceta^* \in G} P_{\veceta|G}
\bigl(\bigl \|\veceta- \veceta^*\bigr \| \leq\epsilon\bigr) \geq c_6
\epsilon^{k-1}.
\]
\item[(b)] If $k>d+1$, the statement holds with a lower bound
$c_6 \epsilon^{d+\sum_{i=1}^{k}\gamma_i}$.
\end{enumerate}
\end{lemma}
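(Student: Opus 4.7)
The plan is to transport the probability computation from $\veceta$-space back to $\vecbeta$-space via the linear map $L=[\vectheta_1\mid\cdots\mid\vectheta_k]$. Since each column $\vectheta_j\in\Delta^d$ satisfies $\|\vectheta_j\|\leq 1$, the operator norm $\|L\|_{\mathrm{op}}$ is bounded by a universal constant depending only on $k,d$, so for any $\vecbeta^*\in\Delta^{k-1}$ with $L\vecbeta^*=\veceta^*$, one has $\{\vecbeta:\|\vecbeta-\vecbeta^*\|\leq \epsilon/\|L\|_{\mathrm{op}}\}\cap\Delta^{k-1}\subset\{\vecbeta:\|L\vecbeta-\veceta^*\|\leq \epsilon\}$. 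Two common ingredients I will invoke throughout: (i) under $\gamma_j\in(0,1]$ each factor $\beta_j^{\gamma_j-1}\geq 1$ on $\Delta^{k-1}$, so $p_\vecbeta(\vecbeta)\geq C(\gamma):=\Gamma(\sum\gamma_j)/\prod\Gamma(\gamma_j)>0$ pointwise on all of $\Delta^{k-1}$, including its boundary; (ii) $\Delta^{k-1}$ is a convex body with intrinsic inradius bounded below by $c(k)>0$, so by coning from any target $\vecbeta^*\in\Delta^{k-1}$ to the inscribed ball, $\Hcal^{k-1}(B(\vecbeta^*,\delta)\cap\Delta^{k-1})\gtrsim c(k)\delta^{k-1}$ for all $\delta\leq\delta_0(k)$.

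For part (a), where $k\leq d+1$, the map $L$ restricted to $\aff\Delta^{k-1}$ is generically an affine injection onto $\aff G$, so $\vecbeta^*=L^{-1}(\veceta^*)$ is uniquely determined and lies somewhere in $\Delta^{k-1}$. Combining (i) and (ii) with the inclusion above gives
\[
P_{\veceta|G}(\|\veceta-\veceta^*\|\leq\epsilon)\geq C(\gamma)\,c(k)\,(\epsilon/\|L\|_{\mathrm{op}})^{k-1}\gtrsim c_6\epsilon^{k-1}
\]
for all $\epsilon\leq\epsilon_0(k)$, uniformly over $G\subset\Delta^d$ and $\veceta^*\in G$, which is exactly the claim. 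The constants depend only on $\gamma,k,d$ because $\|L\|_{\mathrm{op}}$ admits a universal bound.

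For part (b), the hard case, $L$ has a null space of dimension $k-1-d$, so $L^{-1}(\veceta^*)\cap\Delta^{k-1}$ is a $(k-1-d)$-dimensional affine slice, and the main technical obstacle is that as $\veceta^*$ approaches a face or vertex of $G$ this slice collapses toward $\bd\Delta^{k-1}$, where ingredient (ii) still applies but the reachable volume in $\vecbeta$-space shrinks. Concretely, at a vertex-type target $\veceta^*=\vectheta_i$ the only representative is $\vecbeta^*=e_i\in\bd\Delta^{k-1}$, and the elementary identity $L\vecbeta-\vectheta_i=\sum_{j\neq i}\beta_j(\vectheta_j-\vectheta_i)$ gives the estimate $\|L\vecbeta-\vectheta_i\|\leq M(1-\beta_i)$ with $M=\max_j\|\vectheta_j-\vectheta_i\|$, so the useful preimage is the corner $\{\beta_i\geq 1-\epsilon/M\}$. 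The Dirichlet mass of this corner is obtained by a direct change of variables $\beta_j=s\tilde\beta_j$ for $j\neq i$ with $s=\epsilon/M$, producing a factor $s^{\sum_{j\neq i}\gamma_j}$.

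To get a single exponent valid uniformly in $\veceta^*$, I would bound conservatively by combining two contributions: a factor $\epsilon^{d}$ reflecting intrinsic $d$-dimensional variation of $\veceta$ inside $G$, and a factor $\epsilon^{\sum_i\gamma_i}$ reflecting the worst-case penalty accrued from integrating each Dirichlet factor $\beta_j^{\gamma_j-1}$ over an $O(\epsilon)$-window near the boundary of $\Delta^{k-1}$. The hardest step is this uniform patching — rather than splitting into interior/boundary subcases whose exponents differ, the plan is to write down one coarse bound $C(\gamma)\epsilon^d\prod_j \epsilon^{\gamma_j}$ that dominates both regimes, yielding the stated $c_6\epsilon^{d+\sum_i\gamma_i}$. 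The bound is admittedly loose compared with what one obtains pointwise, but it is insensitive to where $\veceta^*$ sits relative to $\bd G$, which is what subsequent applications of this lemma (via assumption (S4) and Theorem~\ref{Thm-KL}) require.
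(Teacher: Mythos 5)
Your part (a) argument is sound and is essentially the same elementary argument as the paper's, merely phrased through a ball-volume bound in $\vecbeta$-space (your ingredient (ii)) rather than the paper's direct product-of-intervals integral; both transport the $\epsilon$-constraint from $\veceta$ back to $\vecbeta$ using $\|\vectheta_j\|\leq 1$ and then exploit $\beta_j^{\gamma_j-1}\geq 1$ on $\Delta^{k-1}$.

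Part (b) is where you have a genuine gap. Your ingredient (ii) applied verbatim gives only $P_{\veceta|G}(\|\veceta-\veceta^*\|\leq\epsilon)\gtrsim\epsilon^{k-1}$, and when the $\gamma_j$ are small one has $k-1 > d+\sum_i\gamma_i$, so the stated exponent $d+\sum_i\gamma_i$ is strictly \emph{stronger} than what the ball-volume argument produces. The correct reason the better exponent holds is that for $k>d+1$ the preimage of an $\epsilon$-ball under $L$ is not an $\epsilon$-ball but an $\epsilon$-slab around a $(k-1-d)$-dimensional fiber, so the $\Hcal^{k-1}$-measure scales as $\epsilon^d$ rather than $\epsilon^{k-1}$; the residual $\epsilon^{\sum\gamma_j}$ comes only from the boundary penalty. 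You see this in your vertex example, but your proposal never supplies the device that lets you treat an \emph{arbitrary} $\veceta^*\in G$: you write that the plan is a ``coarse bound that dominates both regimes'' without exhibiting a single inclusion or change of variables valid uniformly in $\veceta^*$. The paper's one missing idea is Carath\'eodory's theorem: since $\veceta^*\in G\subset\Delta^d$, it has a representation $\veceta^*=\sum_{i=1}^{d+1}\beta_i^*\vectheta_i$ supported on at most $d+1$ of the $\vectheta_j$'s, and one then integrates over the box $\{|\beta_i-\beta_i^*|\leq\epsilon/k,\ i\leq d\}\times\{0\leq\beta_j\leq\epsilon/k,\ j\geq d+2\}$. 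The first $d$ factors contribute $(\epsilon/k)^d$ exactly as in part (a); each of the remaining $k-d-1$ factors contributes $\int_0^{\epsilon/k}\beta_j^{\gamma_j-1}\,d\beta_j=(\epsilon/k)^{\gamma_j}/\gamma_j$, giving $\epsilon^{d+\sum_{j\geq d+2}\gamma_j}\geq\epsilon^{d+\sum_{j=1}^k\gamma_j}$ uniformly in $G$ and $\veceta^*$. Without this sparse-representative step your interior/boundary heuristic does not close, because you have no single choice of $\vecbeta^*$ that simultaneously controls the slab width ($\epsilon^d$) and the boundary decay ($\epsilon^{\sum\gamma_j}$).
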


A consequence of this lemma is that if $\gamma_j \leq1$ for all
$j=1,\ldots,k$,
$k\leq d+1$ and $G$ is $k-1$-dimensional, then
the induced $P_{\veceta|G}$ has a Hausdorff density that is bounded
away from
0 on
the entire its support $\Delta^{k-1}$, which implies $0$-regularity.
On the other hand, if $\gamma_j \leq1$ for all $j$,
$k > d+1$, and $G$ is $d$-dimensional, the $P_{\veceta|G}$ is at least
$\sum_{j=1}^{k}\gamma_j$-regularity. Note that the $\alpha$-regularity condition is concerned with
the density behavior near the boundary of its support, and thus is weaker
than what is guaranteed here.

\subsection*{Bounds on KL divergences}

Suppose that the population polytope $G$ is endowed with a prior
distribution on $\Gcal^k$ (via prior on the population structures
$\vectheta_1,\ldots,\vectheta_k$).
Given $G$, the marginal density ${p_{\SS_{[n]}|G}}$ of
$n$-vector $\SS_{[n]}$ is
obtained via Equation \eqref{Eqn-marginal-SSn}.
To establish the concentration properties of Kullback--Leibler
neighborhood $B_K$ as induced by the prior, we need
to obtain an upper bound on the KL divergences for the marginal densities
in terms of Hausdorff metric on population polytopes.
First, consider a very special case.

\begin{lemma}
\label{Lem-KL-ez}
Let $G,G' \subset\Delta^d$ be closed convex sets satisfying Property
\textup{\ref{propA1}}.
Moreover, assume that
\begin{enumerate}[(b)]
\item[(a)] $G \subset G'$, $\aff G = \aff G'$ is $p$-dimensional, for
$p \leq d$.
\item[(b)] $P_{\veceta|G}$ (resp. $P_{\veceta|G'}$) are uniform
distributions on $G$ (resp. $G'$).
\end{enumerate}
Then, there is a constant $C_1 = C_1(r,p) >0$ such that
$K({p_{\SS_{[n]}|G}}, {p_{\SS_{[n]}|G'}})\leq
C_1 d_{\mathcal{H}}(G,G')$.
\end{lemma}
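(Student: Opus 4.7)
The plan exploits two very special features of the setup: the two mixing distributions are both uniform, and their supports are nested. Together these turn the KL bound into a simple volume comparison.

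First I would write out the densities explicitly. For uniform $P_{\veceta|G}$ and $P_{\veceta|G'}$ on $p$-dimensional sets $G \subset G'$, Eq.~\eqref{Eqn-marginal-SSn} gives
\[
\pG(\SSn) = \frac{1}{\vol_p G}\int_G f(\veceta,\SSn)\,d\Hcal^p(\veceta), \qquad
\pGO(\SSn) = \frac{1}{\vol_p G'}\int_{G'} f(\veceta,\SSn)\,d\Hcal^p(\veceta),
\]
where $f(\veceta,\SSn) = \prod_{j=1}^{n}\prod_{l=0}^{d}\eta_l^{\indicator(X_j=l)} \geq 0$. Because $G \subset G'$, $\int_G f \leq \int_{G'} f$, and so for every realization of $\SSn$,
\[
\frac{\pG(\SSn)}{\pGO(\SSn)} \;=\; \frac{\vol_p G'}{\vol_p G}\cdot\frac{\int_G f}{\int_{G'} f} \;\leq\; \frac{\vol_p G'}{\vol_p G}.
\]

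Next I would plug this pointwise bound into the definition of KL. Since the upper bound on the log-ratio is a deterministic constant,
\[
K(\pG,\pGO) \;=\; \sum_{\SSn} \pG(\SSn)\log\frac{\pG(\SSn)}{\pGO(\SSn)} \;\leq\; \log\frac{\vol_p G'}{\vol_p G} \;\leq\; \frac{\vol_p G' - \vol_p G}{\vol_p G},
\]
using $\log(1+x)\leq x$ for $x\geq 0$.

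Finally I would convert the volume ratio into Hausdorff distance. Since $G \subset G'$, $\vol_p G' - \vol_p G = \vol_p(G\setminus G') \cdot \text{sign}$... more precisely $\vol_p(G'\setminus G) = \vol_p(G\bigtriangleup G')$, so Lemma~\ref{Lem-diff-1}(b) (applied in the $p$-dimensional affine hull $\aff G = \aff G'$, where both sets satisfy property~\AA{} with parameter $r$) gives $\vol_p(G\bigtriangleup G') \leq C_1\,\dH(G,G')$. The denominator is bounded below by $\vol_p B_p(\vectheta_c,r)$, a constant depending only on $r$ and $p$. Combining these yields $K(\pG,\pGO) \leq C_1(r,p)\,\dH(G,G')$, as claimed.

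There is essentially no major obstacle here: the nested-support assumption makes the likelihood ratio bounded pointwise by a purely geometric quantity, which is what makes the lemma so clean (and, as the surrounding text notes, not applicable in any realistic estimation setting where $G$ and $G'$ need not be nested). The only mild care required is the invocation of Lemma~\ref{Lem-diff-1}(b), which is stated for $p$-dimensional bodies, so one works inside $\aff G = \aff G'$ rather than in the ambient $\Delta^d$.
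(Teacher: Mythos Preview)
Your proof is correct. The argument differs from the paper's only in how the bound $K(\pG,\pGO)\le\log\bigl(\vol_p G'/\vol_p G\bigr)$ is obtained: the paper invokes the data-processing inequality $K(\pG,\pGO)\le K(P_{\veceta\times\SSn|G},P_{\veceta\times\SSn|G'})=K(P_{\veceta|G},P_{\veceta|G'})$ and then computes the KL between the two uniform latent laws, whereas you bound the marginal likelihood ratio pointwise via $\int_G f\le\int_{G'}f$. Your route is slightly more elementary (no need to cite the general KL inequality for marginals), while the paper's route makes transparent that nothing about the multinomial kernel is used beyond the hierarchical structure; both lead to the identical volume ratio and then finish with Lemma~\ref{Lem-diff-1}(b) in the common $p$-dimensional affine hull.
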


\begin{pf}
First, we note a well-known fact of KL divergences: the divergence
between marginal distributions
(e.g., on $\SS_{[n]}$)
is bounded from above by the divergence between joint distributions
(e.g., on $\veceta$
and $\SS_{[n]}$ via Equation \eqref{Eqn-joint-eta-SSn}):
\[
K({p_{\SS_{[n]}|G}},{p_{\SS_{[n]}|G'}}) \leq
K(P_{\veceta\times\SS_{[n]}|G} , P_{\veceta\times\SS_{[n]}|G'}).
\]
Due to the hierarchical specification,
$p_{\veceta\times\SS_{[n]}|G} = p_{\veceta|G} \times p_{\SS
_{[n]}|\veceta}$
and $p_{\veceta\times\SS_{[n]}|G'} = p_{\veceta|G'} \times p_{\SS_{[n]}
|\veceta}$,
so $K(P_{\veceta\times\SS_{[n]}|G}, P_{\veceta\times\SS
_{[n]}|G'}) = K(p_{\veceta|G}, p_{\veceta|G'})$.
The assumption $\aff G = \aff G'$ and moreover $G\subset G'$
implies that $K(p_{\veceta|G}, p_{\veceta|G'}) < \infty$. In addition,
$P_{\veceta|G}$ and $P_{\veceta|G'}$ are assumed to be
uniform distributions on $G$ and $G'$, respectively, so
\[
K(p_{\veceta|G}, p_{\veceta|G'}) = \int\log\frac{1/\vol_{p} G} {
1/\vol_{p} G'}
\,\mathrm{d}P_{\veceta|G}.
\]
By Lemma~\ref{Lem-diff-1}(b),
$\log[\vol_{p} G'/\vol_{p}G] \leq\log(1+C_1 d_{\mathcal
{H}}(G,G')) \leq C_1
d_{\mathcal{H}}(G,G')$
for some constant $C_1 = C_1(r,p) > 0$. This completes the proof.
\end{pf}
%
\begin{Remark*}
The previous lemma requires a particularly stringent condition, $\aff G
= \aff G'$,
and moreover $G\subset G'$,
which is usually violated when $k < d+1$. However,
the conclusion is worth noting in that the upper bound does not depend
on the
sample size $n$ (for $\SS_{[n]}$).
The next lemma removes this condition and the condition that
both $p_{\veceta|G}$ and $p_{\veceta|G'}$ be uniform. As a result the
upper bound
obtained is weaker, in the sense that the bound is not
in terms of a Hausdorff distance, but
in terms of a Wasserstein distance.
\end{Remark*}

Let $Q(\veceta_1,\veceta_2)$ denote a coupling of $P(\veceta|G)$ and
$P(\veceta|G')$,
that is, a joint distribution on $G \times G'$
whose induced marginal distributions of $\veceta_1$ and $\veceta_2$ are
equal to $P(\veceta|G)$ and $P(\veceta|G')$, respectively. Let $\Qcal
$ be
the set of all such couplings. The Wasserstein distance between
$p_{\veceta|G}
$ and $p_{\veceta|G'}$ is
defined as
\[
\W_1(p_{\veceta|G},p_{\veceta|G'}) = \inf
_{Q \in\Qcal} \int\| \veceta _1-\veceta_2\|
\,\mathrm{d}Q(\veceta_1,\veceta_2).
\]

\begin{lemma}
\label{Lem-KL-bound}
Let $G,G' \subset\Delta^d$ be closed convex subsets such that
any $\veceta= (\eta_0,\ldots,\eta_d)
\in G \cup G'$ satisfies $\min_{l=0,\ldots,d}\eta_l > c_0$ for
some constant $c_0>0$. Then\vspace*{-1pt}
\[
K({p_{\SS_{[n]}|G}},{p_{\SS_{[n]}|G'}}) \leq
\frac
{n}{c_0} \W_1(p_{\veceta|G},p_{\veceta|G'}).
\]
%
\end{lemma}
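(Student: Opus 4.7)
The overall strategy is a standard coupling--plus--chain--rule argument: lift the problem from marginal densities to joint densities, apply the data processing inequality, and then bound a per-draw Kullback--Leibler cost by a Euclidean distance. Concretely, fix any coupling $Q \in \Qcal$ of $\pGeta$ and $\pGOeta$, and construct two joint distributions on $(\veceta_1,\veceta_2,\SSn)$:
\[
Q_1 := Q(d\veceta_1,d\veceta_2)\, p_{\SSn|\veceta_1}(d\SSn), \qquad
Q_2 := Q(d\veceta_1,d\veceta_2)\, p_{\SSn|\veceta_2}(d\SSn).
\]
Marginalizing out $(\veceta_1,\veceta_2)$ gives back $\pG$ and $\pGO$ respectively, by the definition of a coupling combined with Eq.~\eqref{Eqn-marginal-SSn}. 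So by the data processing inequality (equivalently, joint convexity of $K$),
\[
K(\pG,\pGO) \;\leq\; K(Q_1,Q_2) \;=\; \int K\!\left(p_{\SSn|\veceta_1},\, p_{\SSn|\veceta_2}\right) \, dQ(\veceta_1,\veceta_2),
\]
where the second equality is the chain rule (the $(\veceta_1,\veceta_2)$-marginals of $Q_1$ and $Q_2$ agree, so only the conditional KL remains).

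Next, I reduce the integrand to a quantity linear in $\|\veceta_1-\veceta_2\|$. Since $p_{\SSn|\veceta}$ is a product of $n$ identical categorical factors (see the integrand in Eq.~\eqref{Eqn-marginal-SSn}),
\[
K\!\left(p_{\SSn|\veceta_1},\, p_{\SSn|\veceta_2}\right) \;=\; n\sum_{l=0}^{d} \eta_{1,l}\log\frac{\eta_{1,l}}{\eta_{2,l}}.
\]
Using $\log x \leq x-1$ on each term and $\sum_l(\eta_{1,l}-\eta_{2,l})=0$, this is bounded by the $\chi^2$-divergence $\sum_l (\eta_{1,l}-\eta_{2,l})^2/\eta_{2,l}$. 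Under the hypothesis $\min_l \eta_{2,l} > c_0$ (valid since $\veceta_2\in G'$), together with $|\eta_{1,l}-\eta_{2,l}|\leq 1$, one obtains
\[
\sum_l \frac{(\eta_{1,l}-\eta_{2,l})^2}{\eta_{2,l}} \;\leq\; \frac{1}{c_0}\sum_l |\eta_{1,l}-\eta_{2,l}| \;\leq\; \frac{1}{c_0}\|\veceta_1-\veceta_2\|
\]
(absorbing a dimensional factor when passing from $\ell_1$ to $\ell_2$ into the constant, or simply keeping $\|\cdot\|_1$ throughout).

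Combining the two displays gives, for every coupling $Q$,
\[
K(\pG,\pGO) \;\leq\; \frac{n}{c_0}\int \|\veceta_1-\veceta_2\|\, dQ(\veceta_1,\veceta_2).
\]
Taking the infimum over $Q\in\Qcal$ on the right-hand side yields the stated bound by the definition of $\W_1$. The only nontrivial step is the linear-in-$\|\veceta_1-\veceta_2\|$ bound on the single-draw multinomial KL; this is essentially forced by having $\eta_{2,l}>c_0$ bounded away from zero, which is exactly why the hypothesis on the support of $G\cup G'$ is assumed. Everything else is a routine application of coupling and the chain rule for relative entropy.
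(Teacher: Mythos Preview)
Your argument has the same skeleton as the paper's: reduce $K(\pG,\pGO)$ to $\int K(p_{\SSn|\veceta_1},p_{\SSn|\veceta_2})\,dQ(\veceta_1,\veceta_2)$ via joint convexity of relative entropy over an arbitrary coupling $Q\in\Qcal$, bound the integrand linearly in $\|\veceta_1-\veceta_2\|$, and take the infimum over $Q$.

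The only blemish is in your last display. The inequality $\sum_l|\eta_{1,l}-\eta_{2,l}|\leq\|\veceta_1-\veceta_2\|$ is false (the $\ell_1$ norm dominates the Euclidean norm, not conversely), and your parenthetical fix costs a factor $\sqrt{d+1}$ that is not in the lemma as stated. The paper obtains the exact constant $n/c_0$ by a different pointwise bound: writing $\log(p_{\SSn|\veceta_1}/p_{\SSn|\veceta_2})=n\sum_i \hat\eta_i\log(\eta_{1,i}/\eta_{2,i})$ with $\hat\eta$ the empirical frequency vector of $\SSn$, it first uses the mean value theorem for $\log$ (valid since both $\eta_{1,i},\eta_{2,i}>c_0$) to get $|\log(\eta_{1,i}/\eta_{2,i})|\leq|\eta_{1,i}-\eta_{2,i}|/c_0$, and then applies Cauchy--Schwarz against the probability weights $\hat\eta$, exploiting $\bigl(\sum_i\hat\eta_i^2\bigr)^{1/2}\leq 1$. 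This yields $K(p_{\SSn|\veceta_1},p_{\SSn|\veceta_2})\leq \tfrac{n}{c_0}\|\veceta_1-\veceta_2\|$ with the Euclidean norm and no dimensional factor. Your $\chi^2$ route is fine for the downstream applications (only constants matter there), but does not prove the lemma as stated.
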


\begin{Remark*} As $n \rightarrow\infty$, the upper bound tends
to infinity.
This is expected, because the marginal distribution $P_{\SS_{[n]}|G}$
should degenerate.
Since typically $\aff G \neq\aff G'$, Kullback--Leibler distances
between $P_{\SS_{[n]}|G}$ and
$P_{\SS_{[n]}|G'}$ should typically tend to infinity.
\end{Remark*}
\begin{pf*}{Proof of Lemma~\ref{Thm-KL}}
Associating each sample $\SS_{[n]}= (X_1,\ldots, X_n)$
with a $d+1$-dimensional vector $\veceta(\SS) \in\Delta^{d}$,
where $\veceta(\SS)_i = \frac{1}{n}\sum_{j=1}^{n}\indicator(X_j =
i)$ for each $i=0,\ldots,d$.
The density of $\SS_{[n]}$ given $G$ (with respect to the counting measure)
takes the form:\vspace*{-1pt}
\[
{p_{\SS_{[n]}|G}}(\SS_{[n]}) = \int_{G}
p(\SS_{[n]}|\veceta) \,\mathrm{d}P(\veceta|G) = \int_{G} \exp
\Biggl( n \sum_{i=0}^{d} \veceta(
\SS)_i \log\veceta_i \Biggr) \,\mathrm{d} P(\veceta|G) .
\]

\comment{
$\veceta(\SS)$ is alternatively viewed as a probability distribution
on $\{0,\ldots,d\}$. Let
$H(\veceta(\SS)) = -\sum_{i} \veceta(\SS)_i \log\veceta(\SS)_i$
be the entropy functional
defined on $\veceta(\SS)$.
Note that for any $\veceta\in G$,
\[
\sum_{i=0}^{d} \veceta(\SS)_i
\log\veceta_i = -H\bigl(\veceta(\SS )\bigr)-\sum
_{i} \veceta(\SS)_i \log\bigl(\veceta(
\SS)_i/\veceta_i\bigr) = -H\bigl(\veceta(\SS)\bigr) - D
\bigl(\veceta(\SS)||\veceta\bigr).
\]
}

Due to the convexity of Kullback--Leibler divergence, by Jensen inequality,
for any coupling $Q\in\Qcal$:\vspace*{-1pt}
\begin{eqnarray*}
K({p_{\SS_{[n]}|G}},{p_{\SS_{[n]}|G'}})& =& K \biggl(\int
p(\SS_{[n]}|\veceta_1) \,\mathrm{d}Q(\veceta_1,
\veceta_2), \int p(\SS_{[n]}|\veceta_2) \,\mathrm{d}Q(
\veceta_1,\veceta_2) \biggr)
\\
&\leq&\int K\bigl(p(\SS_{[n]}|\veceta_1),p(
\SS_{[n]}|\veceta_2)\bigr)\, \mathrm{d}Q(\veceta _1,
\veceta_2).
\end{eqnarray*}
It follows that
$K({p_{\SS_{[n]}|G}},{p_{\SS_{[n]}|G'}}) \leq
\inf_{Q}
\int K(p_{\SS_{[n]}|\veceta_1},p_{\SS_{[n]}|\veceta_2})
\,\mathrm{d}Q(\veceta_1,\veceta_2)$.

Note that
$K(P_{\SS_{[n]}|\veceta_1},P_{\SS_{[n]}|\veceta_2}) = \sum_{\SS_{[n]}}
n(K(\veceta(\SS),\veceta_2) - K(\veceta(\SS),\veceta_1))
p_{\SS_{[n]}|\veceta_1}$, where the summation is taken over all
realizations of $\SS_{[n]}\in\{0,\ldots,d\}^n$.
For any $\veceta(\SS) \in\Delta^d$, $\veceta_1 \in G$ and $\veceta
_2 \in G'$,\vspace*{-1pt}
\begin{eqnarray*}
\bigl |K\bigl(\veceta(\SS),\veceta_1\bigr) - K\bigl(\veceta(\SS),
\veceta_2\bigr)\bigr | & = & \Biggl|\sum_{i=0}^{d}
\veceta(\SS)_i \log(\eta_{1,i}/\eta_{2,i})\Biggr|
\\
& \leq& \sum_{i} \veceta(\SS)_i |
\eta_{1,i}-\eta_{2,i}|/c_0
\\
& \leq& \biggl(\sum_{i} \veceta(
\SS)_i^2\biggr)^{1/2} \|\veceta_1 -
\veceta _2\|/c_0
\\
& \leq& \|\veceta_1 - \veceta_2\|/c_0.
\end{eqnarray*}
Here, the first inequality is due the assumption, the second due to
Cauchy--Schwarz.
It follows that
$K(P_{\SS_{[n]}|\veceta_1},P_{\SS_{[n]}|\veceta_2}) \leq n\|\veceta
_1-\veceta_2\|/c_0$,
so $K({p_{\SS_{[n]}|G}},{p_{\SS_{[n]}|G'}})
\leq\frac
{n}{c_0}\W_1(p_{\veceta|G},p_{\veceta|G'})$.
\end{pf*}
\comment{
(ii) If $\phi(u) = \frac{1}{2}|u-1|$, $d_{\phi}$ becomes the
variational distance.
The above inequality becomes:
%
\begin{equation}
\label{Eqn-V1} V({p_{\SS_{[n]}|G}},{p_{\SS_{[n]}|G'}})
\leq \inf_{Q} \int V(p_{\SS_{[n]}|\veceta_1},p_{\SS_{[n]}|\veceta_2}) dQ(
\veceta_1,\veceta_2).
\end{equation}
Note that
\begin{multline}
\label{Eqn-V2} V(P_{\SS_{[n]}|\veceta_1},P_{\SS_{[n]}|\veceta_2}) = \sum
_{\SS_{[n]}} |e^{-n(H(\veceta(\SS))+D(\veceta(\SS)||\veceta_1))} - e^{-n(H(\veceta(\SS))+D(\veceta(\SS)||\veceta_2))}|
\\
\leq n|D\bigl(\veceta(\SS)||\veceta_1\bigr)-D\bigl(\veceta(\SS)||
\veceta_2\bigr)| e^{-n(H(\veceta(\SS)) + \inf_{\veceta\in G\cup G'} D(\veceta(\SS
)||\veceta))}
\\
\leq n\|\veceta_1-\veceta_2\|/c_0 \sum
_{\SS_{[n]}}e^{-n(H(\veceta
(\SS
)) + \inf_{\veceta\in G\cup G'} D(\veceta(\SS)||\veceta))}.
\end{multline}
Fix any element $\veceta_* \in G\cup G'$, then
$|\inf_{G\cup G'} D(\veceta(\SS)||\veceta) - D(\veceta(\SS
)||\veceta_*)| \leq\Diam(G\cup G')/c_0$.
It follows that
\begin{eqnarray*}
\sum_{\SS_{[n]}} \exp-n(H\bigl(\veceta(\SS)\bigr)+\inf
_{G\cup G'}D\bigl(\veceta (\SS )||\veceta\bigr)
\\
\leq\exp\bigl(n \Diam\bigl(G\cup G'\bigr)/c_0\bigr)
\sum_{\SS_{[n]}} \exp-n\bigl(H\bigl(\veceta (\SS )\bigr)+D
\bigl(\veceta(\SS)||\veceta_*\bigr)\bigr)
\\
= \exp\bigl(n \Diam\bigl(G\cup G'
\bigr)/c_0\bigr) \sum_{\SS_{[n]}} p(
\SS_{[n]}| \veceta_*) = \exp\bigl(n \Diam\bigl(G\cup G'
\bigr)/c_0\bigr).
\end{eqnarray*}
Combine the above display with Equation \eqref{Eqn-V1} and \eqref{Eqn-V2}
to obtain:
$V({p_{\SS_{[n]}|G}},{p_{\SS_{[n]}|G'}}) \leq
\frac
{n}{c_0}\exp(n \Diam(G\cup G')/c_0) \W_1(p_{\veceta|G},p_{\veceta|G'})$.
}

\begin{lemma}
\label{Lem-W} Let $G = \conv(\vectheta_1,\ldots, \vectheta_k)$
and $G' = \conv(\vectheta'_1,\ldots, \vectheta'_k)$ (same $k$).
A random variable $\veceta\sim P_{\veceta|G}$ is parameterized by
$\veceta= \sum_{j} \beta_j \veceta_j$,
while a random variable $\veceta\sim P_{\veceta|G'}$
is parameterized by $\veceta= \sum_{j} \beta'_j \veceta'_j$, where
$\vecbeta$ and $\vecbeta'$ are both distributed according to a
symmetric probability
density $p_\vecbeta$.
\begin{enumerate}[(a)]
\item[(a)] Assume that both $G,G'$ satisfy Property \textup{\ref{propA2}}.
Then, for small $d_{\mathcal{H}}(G,G')$,
$\W_1(p_{\veceta|G},\allowbreak   p_{\veceta|G'}) \leq C_0 d_{\mathcal
{H}}(G,G')$ for some constant
$C_0$ specified by Lemma~\ref{Lem-M}.

\item[(b)] Assume further that assumptions in Lemma~\ref{Lem-KL-bound} hold, then
$K({p_{\SS_{[n]}|G}},{p_{\SS_{[n]}|G'}}) \leq
\frac
{n}{c_0}C_0\* d_{\mathcal{H}}(G,G')$.
\end{enumerate}
\end{lemma}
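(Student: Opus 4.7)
Part (b) is immediate from part (a): Lemma~\ref{Lem-KL-bound} gives $K(\pG,\pGO) \leq (n/c_0)\,\W_1(p_{\veceta|G},p_{\veceta|G'})$, and substituting the part (a) bound yields the claim. The real content is part (a), and the plan is to construct an explicit coupling of $p_{\veceta|G}$ and $p_{\veceta|G'}$ that exploits the exchangeability of $p_\vecbeta$.

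Fix a permutation $\pi \in S_k$ to be chosen shortly. Draw a single $\vec{\beta} \sim p_\vecbeta$ and set $\veceta_1 := \sum_{j=1}^{k} \beta_j \vectheta_j$ and $\veceta_2 := \sum_{j=1}^{k} \beta_j \vectheta'_{\pi(j)}$. By construction $\veceta_1 \sim p_{\veceta|G}$. Rewriting $\veceta_2 = \sum_{j'} \beta_{\pi^{-1}(j')} \vectheta'_{j'}$ and using the symmetry of $p_\vecbeta$ (so that $(\beta_{\pi^{-1}(1)},\ldots,\beta_{\pi^{-1}(k)}) \stackrel{d}{=} \vec{\beta}$) shows $\veceta_2 \sim p_{\veceta|G'}$, so this is a valid coupling. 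Since $\sum_j \beta_j = 1$ with $\beta_j \ge 0$, the triangle inequality gives
\[
\|\veceta_1-\veceta_2\| \;=\; \Bigl\|\sum_j \beta_j(\vectheta_j-\vectheta'_{\pi(j)})\Bigr\| \;\leq\; \sum_j \beta_j\,\|\vectheta_j-\vectheta'_{\pi(j)}\| \;\leq\; \max_{j}\|\vectheta_j-\vectheta'_{\pi(j)}\|,
\]
and taking expectation delivers $\W_1(p_{\veceta|G},p_{\veceta|G'}) \leq \max_j \|\vectheta_j-\vectheta'_{\pi(j)}\|$.

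It remains to pick $\pi$ so that the right-hand side is at most $C_0\,\dH(G,G')$. This is where property \A\ enters: Lemma~\ref{Lem-M}(b), applied to $G,G'$ both satisfying \A, gives $\dM(G,G') \leq C_0\,\dH(G,G')$, i.e., the extreme points of $G$ and those of $G'$ admit a matching with every pair within distance $C_0\,\dH(G,G')$. When every $\vectheta_j$ and $\vectheta'_j$ is itself an extreme point, a relabeling realizing this matching provides the desired $\pi$. In the overfitted setting, where some generators coincide or lie strictly inside their polytope, the symmetry of $p_\vecbeta$ permits a free reshuffling of the $k$-tuple labels; one first pairs the extreme-point generators by the $\dM$-optimal matching, and then pairs the remaining repeated/interior generators consistently, so that the extension to a full $\pi \in S_k$ preserves the max-distance bound.

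\textbf{Main obstacle.} Everything hinges on the exchangeability of $p_\vecbeta$: without it, the couplings obtained from different permutations cannot be identified, and no $\dH$-based bound of this form survives---this is precisely the difficulty flagged in the remark following the lemma, and it is the essential reason (S3) is imposed. A secondary, more technical nuisance is the relabeling argument in the overfitted case; this is ultimately bookkeeping, dispatched by matching generators up to the multiplicities induced by the $\dM$-optimal matching on extreme points.
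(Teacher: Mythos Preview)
Your proposal is correct and essentially identical to the paper's proof: the paper also constructs the shared-$\vecbeta$ coupling after relabeling the generators of $G'$ via Lemma~\ref{Lem-M}(b), then bounds $\E\|\veceta-\veceta'\| \le \sum_j \beta_j\|\vectheta_j-\vectheta'_j\|$ by convexity. Your discussion of the permutation and the overfitted case is slightly more explicit than the paper's terse ``otherwise, simply relabel the subscripts for $\vectheta'_j$'s'', but the core argument is the same.
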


\begin{Remark*} In order to obtain an upper bound for
$K({p_{\SS_{[n]}|G}},{p_{\SS_{[n]}|G'}})$
in terms of $d_{\mathcal{H}}(G,G')$, the assumption that $p_{\vecbeta
}$ is symmetric
appears essential. That is, random variables $\beta_1,\ldots, \beta_k$
are exchangeable under $p_{\vecbeta}$.
Without this assumption, it is possible to have
$d_{\mathcal{H}}(G,G') = 0$, but $K({p_{\SS
_{[n]}|G}},{p_{\SS_{[n]}
|G'}}) > 0$.
\end{Remark*}
\begin{pf*}{Proof of Lemma~\ref{Lem-W}}
By Lemma~\ref{Lem-M} under Property \ref{propA2},
$d_{\mathcal{M}}(G,G') \leq C_0 d_{\mathcal{H}}(G,G')$ for some
constant $C_0$.
Let $d_{\mathcal{H}}(G,G')\leq\epsilon$ for some small $\epsilon> 0$.
Assume without loss of generality that $|\vectheta_j -\vectheta'_j|
\leq C_0
\epsilon$ for all $j = 1,\ldots, k$ (otherwise, simply relabel the subscripts
for $\vectheta_j'$'s).

Let $Q(\veceta,\veceta')$ be a coupling of $P_{\veceta|G}$ and
$P_{\veceta|G'}$
such that under $Q$, $\veceta= \sum_{j=1}^{k} \beta_j \vectheta_j$
and\vspace*{-2pt} $\veceta' = \sum_{j=1}^{k} \beta_j \vectheta'_j$, that is,
$\veceta$
and $\veceta'$ share the \emph{same} $\vecbeta$, where $\vecbeta$
is a random variable\vspace*{1pt} with density $p_{\vecbeta}$.
This is a valid coupling, since $p_{\vecbeta}$ is assumed to
be symmetric.

Under\vspace*{1pt} distribution $Q$, $\E\|\veceta- \veceta'\|
\leq\E\sum_{j=1}^{k}\beta_j \|\vectheta_j - \vectheta'_j \|
\leq C_0 \epsilon\E\sum_{j=1}^{k} \beta_j = C_0 \epsilon$.
Hence, $\W_1(P_{\veceta|G},P_{\veceta|G'}) \leq C_0 \epsilon$.
Part (b) is an immediate consequence.
\end{pf*}

Recall the definition of Kullback--Leibler neighborhood given by
Equation \eqref{Eqn-kl-neighborhood}. We are now ready to
prove the main result of this section.

\begin{theorem}
\label{Thm-KL}
Under assumptions \textup{(S1)} and \textup{(S2)}, for any $G_0$ in the support
of prior $\Pi$, for any $\delta> 0$ and $n> \log(1/\delta)$
\[
\Pi\bigl(G \in B_K(G_0,\delta)\bigr) \geq c\bigl(
\delta^2/n^3\bigr)^{kd},
\]
where constant $c = c(c_0,c'_0)$ depends only on $c_0, c'_0$.
\end{theorem}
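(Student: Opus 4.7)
The plan is to identify a subset of the prior support contained in the Kullback--Leibler neighborhood $B_K(G_0,\delta)$, and then apply Assumption (S2) to lower-bound its prior mass. Concretely, I will show that any $G = \conv(\vectheta_1,\ldots,\vectheta_k)$ satisfying $\max_j \|\vectheta_j-\vectheta_j^*\| \leq \epsilon$ with $\epsilon \asymp \delta^2/n^3$ lies in $B_K(G_0,\delta)$; (S2) then yields $\Pi(B_K(G_0,\delta)) \geq c_0'\epsilon^{kd} \asymp (\delta^2/n^3)^{kd}$.

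For the $K$ bound, I would use the synchronized coupling: draw a single $\vecbeta \sim p_{\vecbeta}$ and set $\veceta = \sum_j \beta_j\vectheta_j$, $\veceta^* = \sum_j\beta_j\vectheta_j^*$. Under this coupling, almost surely $\|\veceta-\veceta^*\| \leq \sum_j \beta_j\|\vectheta_j-\vectheta_j^*\| \leq \epsilon$, so $\W_1(\pGeta,\pGzeta) \leq \epsilon$. Lemma~\ref{Lem-KL-bound} (whose hypothesis that the $\veceta$-components are bounded below by $c_0$ is supplied by (S1)) then gives $K(\pGz,\pG) \leq (n/c_0)\,\W_1(\pGeta,\pGzeta) \leq n\epsilon/c_0$.

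The more delicate step is the $K_2$ control. I would exploit (S1) more strongly: provided $\epsilon \leq c_0/2$, every component $\eta_i,\eta_i^* \geq c_0/2$, so under the same coupling
\[
\Bigl|\log \frac{p_{\SSn|\veceta}(\SSn)}{p_{\SSn|\veceta^*}(\SSn)}\Bigr| = \Bigl|\sum_{j=1}^n \log(\eta_{X_j}/\eta^*_{X_j})\Bigr| \leq \frac{2n\epsilon}{c_0}
\]
for every $\vecbeta$ and every $\SSn$. Integrating out $\vecbeta$ preserves this envelope on the marginal densities, yielding $e^{-2n\epsilon/c_0}\pGz(\SSn) \leq \pG(\SSn) \leq e^{2n\epsilon/c_0}\pGz(\SSn)$ pointwise in $\SSn$. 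Hence $|\log(\pGz/\pG)| \leq M := 2n\epsilon/c_0$ uniformly, and $K_2(\pGz,\pG) \leq M^2 = 4n^2\epsilon^2/c_0^2$.

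Choosing $\epsilon = c\delta^2/n^3$ for a small constant $c$ then yields $K \leq n\epsilon/c_0 \lesssim \delta^2/n^2 \leq \delta^2$ and $K_2 \leq 4n^2\epsilon^2/c_0^2 \lesssim \delta^4/n^4 \leq \delta^2$, while the hypothesis $n > \log(1/\delta)$ keeps $\epsilon \leq c_0/2$ in the relevant range so the pointwise coupling remains valid. Invoking (S2) then produces $\Pi\bigl(\max_j \|\vectheta_j-\vectheta_j^*\| \leq \epsilon\bigr) \geq c_0'\epsilon^{kd}$, giving the claim. The main obstacle is the $K_2$ bound: the Wasserstein route of Lemma~\ref{Lem-KL-bound} controls $K$ only in an average sense and says nothing about the squared log-ratio, so the essential work is in promoting the coupling to a pointwise sandwich on $\pG/\pGz$ via the lower bound on the $\vectheta$-components.
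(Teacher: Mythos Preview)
Your proposal is correct and takes a genuinely different, more elementary route than the paper. The paper controls $K_2$ indirectly: it first bounds the Hellinger distance via $h^2(\pGz,\pG) \leq K/2 \leq n\epsilon/(2c_0)$ (Lemma~\ref{Lem-KL-bound}), then invokes the Wong--Shen inequality $K_2(p,q) = O\bigl(h^2[\log(M/h)]^2\bigr)$ with the crude ratio bound $\sum \pGz^2/\pG \leq (1/c_0)^n$. This yields $K_2 = O(n\epsilon\cdot n^2) = O(n^3\epsilon)$, which forces $\epsilon = \delta^2/n^3$, and the hypothesis $n > \log(1/\delta)$ is then needed to absorb the $\log(1/(n\epsilon))$ factor inside the Wong--Shen bound. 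Your argument bypasses Wong--Shen entirely: the pointwise sandwich $e^{-M}\pGz \leq \pG \leq e^{M}\pGz$ with $M = 2n\epsilon/c_0$ (obtained by integrating the $\vecbeta$-wise envelope) gives $K_2 \leq M^2$ directly. This is both simpler and sharper---in fact your bounds would already place $G$ in $B_K(G_0,\delta)$ with the larger choice $\epsilon \asymp \delta^2/n$, not $\delta^2/n^3$.

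Two minor remarks. First, under (S1) every $\vectheta_j$ in the prior support has components exceeding $c_0$, hence so does every convex combination; thus both $\eta_i,\eta_i^* > c_0$ automatically and the proviso $\epsilon \leq c_0/2$ is unnecessary. Second, your approach does not actually use the hypothesis $n > \log(1/\delta)$ at all---that condition is an artifact of the Wong--Shen route and your stated justification for it (ensuring $\epsilon \leq c_0/2$) does not follow from it anyway.
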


\begin{pf}
We shall invoke a bound of \cite{Wong-Shen-95} (Theorem~5) on
the KL divergence. This bound says that if $p$ and $q$ are
two densities on a common space such that $\int p^2/q < M$, then for some
universal constant $\epsilon_0 > 0$, as long as $h(p,q) \leq\epsilon
< \epsilon_0$,
there holds: $K(p,q) = \mathrm{O}(\epsilon^2 \log(M/\epsilon))$,
and $K_2(p,q) := \int p(\log(p/q))^2 = \mathrm{O}(\epsilon^2 [\log(M/\epsilon)]^2)$,
where the big O constants are universal.

Let $G_0 = \conv(\vectheta_1^*,\ldots,\vectheta_k^*)$.
Consider a random set $G \in\Gcal^k$ represented by $G = \conv
(\vectheta_1,
\ldots,\allowbreak  \vectheta_k)$, and the event $\mathcal{E}$ that
$\|\vectheta_j - \vectheta_j^*\|\leq
\epsilon$ for all $j=1,\ldots,k$. For the pair of $G_0$ and $G$,
consider a coupling $Q$ for $P_{\veceta|G}$ and $P_{\veceta|G_0}$
such that
any $(\veceta_1,\veceta_2)$ distributed by $Q$ is parameterized
by $\veceta_1 = \beta_1\vectheta_1 + \cdots+\beta_k \vectheta_k$
and $\veceta_2 = \beta_1\vectheta_1^* + \cdots+\beta_k \vectheta_k^*$
(that is, under the coupling $\veceta_1$ and $\veceta_2$
share the same vector $\vecbeta$).
Then, under $Q$, $\E\|\veceta_1 - \veceta_2 \| \leq\epsilon$.
This entails that $\W_1(P_{\veceta|G}, P_{\veceta|G_0}) \leq
\epsilon$. (We note
here that the argument appears similar to the one from
Lemma~\ref{Lem-W}, but we do not need to assume that $p_\vecbeta$
be symmetric in this\vadjust{\goodbreak} theorem.)
If $G$ is randomly distributed according to prior $\Pi$, under
assumption (S2),
the probability of event $\mathcal{E}$
is lower bounded by $c'_0\epsilon^{kd}$.
By Lemma~\ref{Lem-KL-bound},
$h^2(p_{G_0},p_{G}) \leq K(p_{G_0},p_{G})/2 \leq(n/c_0) W_1(P_{\veceta|G}
,P_{\veceta|G_0})
\leq n\epsilon/(2c_0)$.
Note that the density ratio ${p_{\SS_{[n]}|G}}/
{p_{\SS_{[n]}|G_0}}\leq(1/c_0)^n$, which
implies that
$\sum_{\SS_{[n]}} {p_{\SS_{[n]}|G_0}}^2/
{p_{\SS_{[n]}
|G}}\leq(1/c_0)^n$.
We can\vspace*{2pt} apply the
upper bound described in the previous paragraph to obtain:
\[
K_2({p_{\SS_{[n]}|G_0}},{p_{\SS_{[n]}|G}}) =
\mathrm{O} \biggl( \frac{n\epsilon}{2c_0} \biggl[\frac{1}{2}\log\frac{2c_0}{n\epsilon} + n
\log\frac{1}{c_0} \biggr]^2 \biggr).
\]
Here, the big O constant is universal.
If we set $\epsilon= \delta^2/n^3$, then the quantity in the
right hand side of the previous display is bounded by $\mathrm{O}(\delta^2)$
as long as $n > \log(1/\delta)$.
Combining with the probability bound $c'_0\epsilon^{kd}$ derived above,
we obtain the desired result.
\end{pf}

\section{Proofs of main theorems and auxiliary lemmas}
\label{Sec-proofs}

\begin{pf*}{Proof of Theorem \protect\ref{Thm-Main} (Overfitted setting)}
The proof proceeds by verifying conditions of Theorem~\ref{Thm-Gen}.
Let $\epsilon_{m,n}= (\log m/m)^{1/2} + (\log n/m)^{1/2}
+ (\log n/n)^{1/2}$.
%
%
Let $\Gcal:= \supp\Pi\subset\Gcal^k$.
Starting with the entropy condition \eqref{Eqn-entropy-2}, we note that
\begin{eqnarray*}
\log D\bigl(\epsilon/2,\Gcal\cap B_{\Hcal}(G_0,2\epsilon),
d_{\mathcal{H}}\bigr) & \leq& \log N\bigl(\epsilon/4, \Gcal\cap
B_{\Hcal}(G_0,2\epsilon), d_{\mathcal{H}} \bigr) = \mathrm{O}(1).
\end{eqnarray*}

By Theorem~\ref{Thm-C}(a), assumption (S3a) and the general inequality that
$h \geq V$, we have:
\[
\Psi_{\Gcal,n}( \epsilon) \geq \bigl[c_1 (
\epsilon/2)^{p+\alpha} - 6(d+1)\mathrm{e}^{-n\epsilon^2/32(d+1)}\bigr]^2,
\]
where $p = \min(k-1,d)$.
So $\Psi_{\Gcal,n}(\epsilon) \geq c\epsilon^{2(p+\alpha)}$ as long as
$c_1(\epsilon/2)^{p+\alpha} \geq12(d+1)\exp[-n\epsilon
^2/\allowbreak  32(d+1)]$. Here, $c$
is a constant depending on $c_1, p, d$. This
is satisfied if $\epsilon$ is bounded from below by a large multiple
of $\epsilon_{m,n}> (\log n/n)^{1/2}$. Using $\Phi_{\Gcal,n}(\delta
) :=
\frac{c_0}{4nC_0}
\Psi_{\Gcal,n}(\delta)$ (cf. \hyperref[rem]{Remark} following Definition~\ref
{Def-Hellinger})
it follows that
\begin{eqnarray*}
&& \log D\bigl(c_0\Psi_{\Gcal,n}(\epsilon)/(4nC_0),
\Gcal\cap B_{\Hcal
}(G_1,\epsilon/2), d_{\mathcal{H}}\bigr)
\\
&&\quad  \leq \log N\bigl(c_0 c\epsilon^{2(p+\alpha)}/(4nC_0),
\Gcal\cap B_{\Hcal}(G_1,\epsilon/2), d_{\mathcal{H}}\bigr)
\\
&&\quad  \lesssim \log\bigl(n^{kd}\epsilon^{-(2p+2\alpha-1)kd}\bigr) \leq m
\epsilon^2,
\end{eqnarray*}
where the last inequality holds since
$\epsilon$ is bounded from below by a large multiple of
$\epsilon_{m,n}> (\log n/m)^{1/2} + (\log m/m)^{1/2}$. Thus, the
entropy condition
\eqref{Eqn-entropy-2} is established.

To verify condition Equation \eqref{Eqn-rate-cond}, we note
that for some constant $c>0$,
\begin{eqnarray*}
&& \exp\bigl(2m\epsilon_{m,n}^2\bigr)\sum
_{j\geq M_m} \exp\bigl[-m\Psi_{\Gcal
,n}(j\epsilon_{m,n}
)/8\bigr]
\\
&&\quad  \leq \exp\bigl(2m\epsilon_{m,n}^2\bigr) \sum
_{j\geq M_m} \exp \bigl[-cm(j\epsilon_{m,n}
)^{2(p+\alpha)}/8\bigr]
\\
&&\quad  \lesssim \exp\bigl(2m\epsilon_{m,n}^2\bigr) \exp
\bigl[-cm(M_m \epsilon _{m,n})^{2(p+\alpha)}/8\bigr],
\end{eqnarray*}
where the right side of the above display vanishes if
$(M_m\epsilon_{m,n})^{p+\alpha}$ is a sufficiently large multiple of
$\epsilon_{m,n}$.
This holds if we choose $M_m = M\epsilon_{m,n}^{-\vafrac{p+\alpha
-1}{p+\alpha
}}$ for
a large constant $M$. Equation \eqref{Eqn-support-2} also holds.

It remains to verify Equation \eqref{Eqn-support-3}. By Theorem~\ref{Thm-KL},
as long as $n \gtrsim\log(1/\epsilon_{m,n})$,
\begin{eqnarray*}
\log\Pi\bigl(G\in B_K(G_0,\epsilon_{m,n})
\bigr) &\geq& c(c_0) \log\bigl(\epsilon _{m,n}^2/n^3
\bigr)^{kd}
\\
&=& c(c_0) kd(2\log\epsilon_{m,n}- 3\log
n).
\end{eqnarray*}
Equation \eqref{Eqn-support-3} holds for a sufficiently large constant $C$
because $\epsilon_{m,n}> (\log n/m)^{1/2} + (\log m/\allowbreak  m)^{1/2}$, and the
constraint
that $n > \log m$.
\comment{
Moreover, $\Pi(B_{\Hcal}(G_0,2 j\epsilon_{m,n}) \setminus B_{\Hcal
}(G_0,j\epsilon_{m,n}))
\leq\Pi(B_{\Hcal}(G_0,2 j\epsilon_{m,n}))$.
Take any $j \geq M_m$,
if $d_{\mathcal{H}}(G,G_0) \leq j\epsilon_{m,n}$, then at least one
of $G$'s extreme
points is within
$O(j\epsilon_{m,n})$ distance from $G_0$'s extreme points, by Lemma~\ref{Lem-M} (b). By an union bound, and the assumption that
the prior densities for $\vectheta_1,\ldots,\vectheta_k$ are bounded
away from 0,
$\Pi(B_{\Hcal}(G_0,2j\epsilon_{m,n})) \lesssim k^2 (2j\epsilon
_{m,n})^d$. As the
result, the logarithm of the left side of Equation \eqref{Eqn-support-2} is
upper bounded by
\begin{eqnarray*}
\log\bigl[k^2(2j\epsilon_{m,n})^d
\bigl(n^3/\epsilon_{m,n}\bigr)^{kd}\bigr] \leq \log
\bigl(k^2 2^d\bigr) + d\log j + kd \log(1/
\epsilon_{m,n}) + 3kd\log n
\\
\lesssim m (j\epsilon_{m,n})^{2(p+\alpha)} \lesssim m
\Psi_{\Gcal
_m,n}(j\epsilon_{m,n})/16
\end{eqnarray*}
The last inequality of the previous display is due to Theorem~\ref
{Thm-C} (a).
The next to the last inequality holds because for any $j\geq M_m$,
$m(j\epsilon_{m,n})^{2(p+\alpha)} \gtrsim m\epsilon_{m,n}^2
\gtrsim\log n \vee\log(1/\epsilon_{m,n})$, and that
$m(j\epsilon_{m,n})^{2(p+\alpha)} \gtrsim\log j$.
}

Now, we can apply Theorem~\ref{Thm-Gen} to obtain a posterior contraction
rate $M_m\epsilon_{m,n}\asymp\epsilon_{m,n}^{1/(p+\alpha)}$.
\end{pf*}
%
\begin{pf*}{Proof of Theorem \protect\ref{Thm-Main-2}}
The proof proceeds in exactly the same way as Theorem~\ref{Thm-Main},
except that part (b) of Theorem~\ref{Thm-C} is applied instead of part (a).
Accordingly, $p$ is replaced by $1$ in the rate exponent.
\end{pf*}
\begin{pf*}{Proof of Theorem \protect\ref{Thm-minimax} (Minimax lower bounds)}
(a) The proof involves the
construction of a pair of polytopes in $\Gcal^k$ whose set difference has
small volume for a given Hausdorff distance. We consider
two separate cases: (i) $k/2 \leq d$ and (ii) $k > 2d$.

If $k/2 \leq d$, consider a $q = \lfloor k/2 \rfloor$-simplex $G_0$
that is spanned by $q+1$ vertices in general positions.
Take a vertex of $G_0$, say $\vectheta_0$. Construct
$G_0'$ by chopping $G_0$ off by an $\epsilon$-cap
that is obtained by the convex hull of $\vectheta_0$ and $q$ other
points which
lie on the edges adjacent to $\vectheta_0$, and of distance $\epsilon$
from $\vectheta_0$. Clearly, $G_0'$ has $2q \leq k$ vertices, so
both $G_0$ and $G_0'$ are in $\Gcal^k$. We have
$d_{\mathcal{H}}(G_0, G_0') \asymp\epsilon$, and
$\vol_{q}(G_0 \setminus G_0') \asymp\epsilon^{q}$.
Due to assumption (S4),
$V(p_{\veceta|G_0}, p_{\veceta|G_0'}) \lesssim\epsilon^{q+\alpha'}$.
We note
here and for the rest of the proof,
the multiplying constants in asymptotic inequalities depend only
on $r,R,\delta$ of Properties \ref{propA1} and \ref{propA2}.

If $k > 2d$, consider a $d$-dimensional polytope $G_0$ which has $k-d+1$
vertices in general positions.
Construct $G_0'$ in the same way as above (by chopping $G_0$ off by
an $\epsilon$-cap that contains a vertex $\vectheta_0$ which
has $d$ adjacent vertices). Then, $G_0'$ has $(k-d+1)-1+d = k$ vertices.
Thus, both $G_0'$ and $G_0$ are in $\Gcal^k$. We have
$d_{\mathcal{H}}(G_0, G_0') \asymp\epsilon$, and
$\vol_{d}(G_0 \setminus G_0') \asymp\epsilon^{d}$.
Due to assumption (S4),
$V(p_{\veceta|G_0}, p_{\veceta|G_0'}) \lesssim\epsilon^{d+\alpha'}$.

To combine the two cases, let $q = \min(\lfloor k/2 \rfloor,d)$.
We have constructed a pair of $G_0, G_0' \in\Gcal^k$ such that
$d_{\mathcal{H}}(G_0, G_0') \asymp\epsilon$, and
$V(p_{\veceta|G_0}, p_{\veceta|G_0'}) \lesssim\epsilon^{q+\alpha'}$.
By Lemma~\ref{Lem-KL-bound}, $K({p_{\SS_{[n]}|G_0}},
{p_{\SS_{[n]}|G_0'}}) \lesssim
n W_1(p_{\veceta|G_0}, p_{\veceta|G_0'}) \lesssim n V(p_{\veceta
|G_0}, p_{\veceta|G_0'})
\leq C n\epsilon^{q+\alpha'}$ for some constant $C > 0$ independent\vspace*{1pt}
of $\epsilon$ and $n$. Note that the second inequality
in the above display is due to Theorem~6.15 of \cite{Villani-08}.

Applying the method due to Le Cam (cf. \cite{Yu-97}, Lemma~1), for any
estimator
$\hatG\in\Gcal^*$,\vspace*{-1pt}
\[
\max_{G \in\{G_0,G_0'\}} P_{\SS_{[n]}|G_0}d_{\mathcal{H}}(G, \hatG) \gtrsim
\epsilon\biggl(1 - \frac{1}{2}V\bigl(P_{\SS_{[n]}|G_0}^{m},
P_{\SS_{[n]}|G_0'}^{m}\bigr)\biggr).
\]
Here, $P_{\SS_{[n]}|G_0}^{m}$ denotes the (product) distribution of
the $m$-sample
$\SS_{[n]}^1,\ldots,\SS_{[n]}^m$. Thus,\vspace*{-1pt}
\begin{eqnarray*}
V^2\bigl(P_{\SS_{[n]}|G_0}^{m},P_{\SS_{[n]}|G_0'}^{m}
\bigr) & \leq& h^2\bigl(P_{\SS
_{[n]}|G_0}^{m},P_{\SS_{[n]}|G_0'}^{m}
\bigr)
\\[-1pt]
& = & 1 - \int\bigl[P_{\SS_{[n]}|G_0}^{m}P_{\SS_{[n]}|G_0'}^{m}
\bigr]^{1/2}
\\[-1pt]
& = & 1 - \bigl[1- h^2({p_{\SS_{[n]}|G_0}},
{p_{\SS_{[n]}
|G_0'}})\bigr]^{m}
\\[-1pt]
& \leq& 1- \bigl(1- Cn\epsilon^{q+\alpha'}\bigr)^{m}.
\end{eqnarray*}
The last inequality is
due to $h^2({p_{\SS_{[n]}|G_0}},{p_{\SS_{[n]}|G_0'}})
\leq K({p_{\SS_{[n]}|G_0}},{p_{\SS
_{[n]}|G_0'}}) \leq
Cn\epsilon^{q+\alpha'}$.
Thus,\vspace*{-1pt}
\[
\max_{G \in\{G_0,G_0'\}} P_{\SS_{[n]}|G_0}d_{\mathcal{H}}(G, \hatG) \gtrsim
\epsilon\biggl(1 - \frac{1}{2}\bigl[1- \bigl(1- Cn\epsilon^{q+\alpha'}
\bigr)^{m}\bigr]^{1/2}\biggr).
\]

Letting $\epsilon^{q+\alpha'} = \frac{1}{Cm n}$, the right side
of the
previous display is bounded from below by $\epsilon(1-\frac
{1}{2}(1-1/2)^{1/2})$.

(b) We employ the same construction of $G_0$ and $G_0'$ as in part (a).
Using the argument used in the proof of Lemma~\ref{Lem-KL-ez},
$K({p_{\SS_{[n]}|G_0'}}, {p_{\SS_{[n]}|G_0}})
= \int
\log[\vol_{q} G_0/ \vol_{q} G_0']\, \mathrm{d} P_{\veceta|G_0}
\leq\int\log(1+C\epsilon^{q})P_{\veceta|G_0}
\lesssim\epsilon^{q}$. So, $h^2({p_{\SS_{[n]}|G_0}},
{p_{\SS_{[n]}|G_0'}}) \leq K({p_{\SS_{[n]}|G_0'}},
{p_{\SS_{[n]}|G_0}})
\lesssim\epsilon^{q}$. Then, the proof proceeds as in part (a).

(c) Let $G_0'$ be a polytope such that $|\extr G_0'| = |\extr G_0 | =
k$ and
$d_{\mathcal{H}}(G_0',G_0) =\epsilon$.
By Lemma~\ref{Lem-diff-1}, $\vol_{p}(G_0\bigtriangleup G_0') =
\mathrm{O}(\epsilon)$,
where $p=(k-1)\wedge d$.
The proof proceeds as in part (a) to obtain
$(1/mn)^{1/(1+\alpha')}$ rate for the lower bound
under assumption (S4). Under assumption (S4$'$), as in part (b),
the dependence on $n$ can be removed to obtain $1/m$ rate.
\end{pf*}
\begin{pf*}{Proof of $\alpha$-regularity of the Dirichlet-induced
densities in Lemma \protect\ref{Lem-Dirichlet}}
First, consider the case $k \leq d+1$.
For $\veceta^* \in G$, write $\veceta^* = \beta_1^* \vectheta_1
+\cdots+ \beta_k^* \vectheta_k$.
For $\vecbeta\in\Delta^{k-1}$ such that $|\beta_i-\beta_i^*| \leq
\epsilon/k$ for
all $i=1,\ldots, k-1$, we have
$\|\veceta- \veceta^*\| = \|\sum_{i=1}^{k}(\beta_i - \beta
_i^*)\vectheta_i\|
\leq\sum_{i=1}^{k} |\beta_i-\beta_i^*| \leq2\sum_{i=1}^{k-1}
|\beta_i-\beta_i^*|
\leq2\epsilon$. Here, we used the fact that
$\|\vectheta_i\| \leq1$ for any $\vectheta_i \in\Delta^{d}$.
Without loss of generality,
assume that $\beta_k^* \geq1/k$. Then, for any $\epsilon< 1/k$\vspace*{-1pt}
\begin{eqnarray*}
&& P_{\veceta|G}\bigl(\bigl \|\veceta-\veceta^*\bigr \|\leq2\epsilon\bigr) \\
&&\quad \geq
P_{\vecbeta}\bigl(\bigl |\beta_i - \beta_i^*\bigr | \leq
\epsilon/k; i=1,\ldots , k-1\bigr)
\\
&&\quad  = \frac{\Gamma(\sum\gamma_i)}{  \prod_{i} \Gamma(\gamma_i)} \int_{\beta_i \in[0,1];|\beta_i-\beta_i^*| \leq\epsilon/k;
i=1,\ldots,k-1} \prod
_{i=1}^{k-1}\beta_i^{\gamma_i-1}
\Biggl(1-\sum_{i=1}^{k-1}\beta_i
\Biggr)^{\gamma_k-1}\,\mathrm{d}\beta_1\cdots \,\mathrm{d}\beta _{k-1}
\\
&&\quad  \geq\frac{\Gamma(\sum\gamma_i)}{ \prod_{i} \Gamma(\gamma_i)} \prod_{i=1}^{k-1}
\int_{\max(\gamma_i^*-\epsilon/k,0)}^{\min
(\gamma_i^*+\epsilon/k,1)} \beta_i^{\gamma_i-1} \,\mathrm{d}
\beta_i \geq\frac{\Gamma(\sum\gamma_i)}{ \prod_{i} \Gamma(\gamma_i)} (\epsilon/k)^{k-1}.
\end{eqnarray*}
Both the second and the third inequality in the previous display
exploits the
fact that since $\gamma_i \leq1$, $x^{\gamma_i-1} \geq1$ for any $x
\leq1$.

Now, consider the case $k > d+1$. The proof in the previous case applies,
but we can achieve a better lower bound
because the intrinsic dimensionality of $G$ is $d$, not $k-1$.
Since $\veceta^* \in\conv(\vectheta_1,\ldots,\vectheta_k) \subset
\Delta^{d}$,
by Carath\'eodory's theorem, $\veceta^*$ is the convex combination of
$d+1$ or fewer
extreme points among $\vectheta_i$'s. Without loss of generality, let
$\vectheta_1,\ldots,
\vectheta_{d+1}$ be such points, and write $\veceta^* = \beta
_1^*\vectheta_1 +
\cdots+\beta_{d+1}^* \vectheta_{d+1}$.
Consider $\veceta= \beta_1 \vectheta_1 + \cdots+ \beta_k\vectheta
_k$, where
$\|\beta_i-\beta_i^*| \leq\epsilon/k$, for $i=1,\ldots, d$,
while $0\leq\beta_i \leq\epsilon/k$ for $i = d+2,\ldots, k$. Then,
$\|\veceta- \veceta^*\| \leq2\epsilon$. This implies that
\begin{eqnarray*}
&&P_{\veceta|G}\bigl(\bigl \|\veceta-\veceta^*\bigr \|\leq2\epsilon\bigr) \\
&&\quad \geq
P_{\vecbeta}\bigl(\bigl |\beta_i - \beta_i^*\bigr | \leq
\epsilon/k, i=1,\ldots , d+1; |\beta_j| \leq\epsilon/k, j > d+1
\bigr)
\\
&&\quad \geq\frac{\Gamma(\sum\gamma_i)}{  \prod_{i} \Gamma(\gamma_i)} \prod_{i=1}^{d}
\int_{\max(\gamma_i^*-\epsilon/k,0)}^{\min(\gamma
_i^*+\epsilon/k,1)} \beta_i^{\gamma_i-1} \,\mathrm{d}
\beta_i \prod_{i=d+2}^{k}\int
_{0}^{\epsilon/k} \beta_i^{\gamma_i-1} \,\mathrm{d}
\beta_i
\\
&&\quad \geq\frac{\Gamma(\sum\gamma_i)}{  \prod_{i} \Gamma(\gamma_i)} (\epsilon/k)^{d + \sum_{i=d+2}^{k}\gamma_i}\biggl/\prod
_{i=d+2}^{n}\gamma_i \gtrsim
\epsilon^{d + \sum_{i=1}^{k}\gamma_i}.
\end{eqnarray*}
This concludes the proof.
\end{pf*}

\comment{
The linear map $\vecbeta\mapsto\veceta$ from $\Delta^{k-1}$ to
$\Delta^{d}$ is generally not invertible.
Write $\veceta= \beta_1(\vectheta_1-\vectheta_k) + \cdots+ \beta
_{k-1}(\vectheta_{k-1}-\vectheta_k) + \vectheta_k
= L' \vecbeta+ \vectheta_k$, where $L' = [\vectheta_1-\vectheta_k
\ldots\vectheta_{k-1}-\vectheta_k]$
is a $(d+1)\times(k-1)$ matrix. Since $\vectheta_i \in\Delta^d$,
let $L$ be
the $d\times(k-1)$ obtained by deleting the bottom row from $L'$.
}
\begin{appendix}
\section{Proofs of geometric lemmas}\label{appA}

\begin{pf*}{Proof of Lemma \protect\ref{Lem-M}}
(a) Let $G= \conv(\vectheta_1,\ldots,\vectheta_k)$
and $G' = \conv(\vectheta'_1,\ldots, \vectheta'_{k'})$.
This part of the lemma is immediate from the definition by noting that
for any $x\in G$, $d(x,G') \leq\min_{j} \|x-\vectheta'_j\|$, while
the maximum of $d(x,G')$ is attained at some extreme point of $G$.

(b) Let $d_{\mathcal{H}}(G,G') = \epsilon$ for some small $\epsilon> 0$.
Take an extreme point of $G$, say $\vectheta_1$.
Due to \ref{propA2}, there is a ray emanating
from $\vectheta_1$ that intersects with the interior of $G$ and the angles
formed by the ray and all (exposed) edges incident to
$\vectheta_1$ are bounded from above by $\pi/2-\delta$.
Let $x$ be the intersection between the ray and the boundary of
$B_{\d}(\vectheta_1,\epsilon)$.

Let $H$ be a $\d-1$-dimensional
hyperplane in $\real^{\d}$ that touches (intersects with) $B_{\d
}(\vectheta_1,\epsilon)$
at only $x$. Define $C(x)$, resp. $C_{\epsilon}(x)$, to
be the $\d$-dimensional caps obtained by the intersection between
$G$, resp. $G_{\epsilon}$, with the half-space which contains
$\vectheta_1$
and which is supported by $H$. For any $x'$ that lies in the intersection
of $H$ and a line segment $[\vectheta_1,\vectheta_i]$, where
$\vectheta_i$ is another vertex of $G$,
the line segment $[x,x'] \in H$ and $\|x-x'\| \leq\epsilon\cot\delta$.
Suppose that the ray emanating from $x$ through $x'$
intersects with $\bd G_{\epsilon}$ at $x''$.
Then, $\|x'-x''\| \leq\epsilon/\sin\delta$, which implies
that $\|x-x''\| \leq\epsilon(\cot\delta+ 1/\sin\delta)$ by
triangle inequality.
This entails that $\operatorname{Diam}C_{\epsilon}(x) \leq C \epsilon
$, where
$C = (1+(\cot\delta+ 1/\sin\delta)^2)^{1/2}$.

Now, $d_{\mathcal{H}}(G,G') = \epsilon$ implies that
$G' \cap B_{\d}(\vectheta_1,\epsilon) \neq\emptyset$. There is
an extreme point of $G'$ in the half-space which contains $B(\vectheta
_1,\epsilon)$ and
is supported by $H$.
But $G' \subset G_{\epsilon}$, so there is an extreme point of $G'$ in
$C_{\epsilon}(x)$. Hence, there is $\vectheta'_j \in G'$ such that
$\|\vectheta'_j - \vectheta_1\| \leq\operatorname{Diam}(C_{\epsilon
}(x)) \leq C\epsilon$.
Repeat this argument for all other extreme points of $G$ to conclude
that $d_{\mathcal{M}}(G,G') \leq C\epsilon$.
\end{pf*}
%
\begin{pf*}{Proof of Lemma \protect\ref{Lem-diff-1}}
(a) Let $d_{\mathcal{H}}(G,G') = \epsilon$. There exists either a
point $x \in
\bd G$ such
that $G' \cap B_{\d}(x,\epsilon/2) = \emptyset$, or a point $x' \in
\bd G'$
such that $G \cap B_{\d}(x,\epsilon/2) = \emptyset$. Without loss of
generality,
assume the former. Thus, $\vol_{\d} G \bigtriangleup G' \geq\vol
_{\d} B_{\d}(x,\epsilon/2)
\cap G$. Consider the convex cone emanating from $x$ that circumscribes
the $\d$-dimensional spherical ball $B_\d(\vectheta_c,r)$ (whose
existence is
given by Property \ref{propA1}). Since $\|x-\vectheta_c\| \leq R$,
the angle between the line segment $[x,\vectheta_c]$ and
the cone's rays is bounded from below by $\sin\varphi\geq r/R$.
So, $\vol_{\d} B_d(x,\epsilon/2) \cap G \geq c_1 \epsilon^{\d}$,
where $c_1$ depends only on $r,R, p$.

(b) Let $d_{\mathcal{H}}(G,G') = \epsilon$. Then $G' \subset
G_{\epsilon}$ and $
G \subset G_{\epsilon}'$. Take any point $x \in\bd G$, let $x'$ be
the intersection between $\bd G_{\epsilon}$ and the ray emanating from
$\vectheta_c$ and passing through $x$. Let $H_1$ be a $\d-1$
dimensional supporting hyperplane for $G$ at $x$. There is also a supporting
hyperplane $H_2$ of $G'$ that is parallel to $H_1$ and of at most
$\epsilon$ distance away from $H_1$.
Since $\|\vectheta_c - x\| \leq R$, while the distance from $\vectheta
_c$ to
$H_1$ is lower bounded by $r$, the angle $\varphi$ between vector
$\vectheta_c-x$ and the vector normal to $H_1$ satisfies $\cos\varphi
\geq r/R$.
This implies that $\|x'-x\| \leq\epsilon/\cos\varphi\leq\epsilon R/r$,
so $\|x'-\vectheta_c\|/\|x-\vectheta_c\| \leq1+\epsilon R/r^2$. In
other words,
$G_{\epsilon} - \vectheta_c \subset(1+\epsilon R/r^2)(G-\vectheta_c)$.
So, $\vol_{\d} G' \setminus G \leq
\vol_{\d} G_{\epsilon} \setminus G \leq[(1+\epsilon R/r^2)^{\d}-1]
\vol_{\d} G
\leq C_1 \epsilon$, where $C_1$ depends only on $r,R,p$.
We obtain a similar bound for $\vol_{\d} G \setminus G'$, which concludes
the proof.
\end{pf*}
%
\begin{pf*}{Proof of Lemma \protect\ref{Lem-diff-3}}
We provide a proof for case (a). Let $G = \conv(\vectheta_1, \ldots,
\vectheta_k)$ and
$G' = \conv(\vectheta'_1, \ldots, \vectheta'_k)$, where $G$ is fixed
but $G'$ is allowed to vary. Since $G$ is fixed, it satisfies
\ref{propA1} and \ref{propA2} for some constants $r, R$ and $\delta$ (depending on $G$).
Moreover, there is some $\epsilon_0 = \epsilon_0(G)$ depending
only $G$ such that as soon as $d_{\mathcal{H}}(G,G') \leq\epsilon_0$,
$G'$ also satisfies \ref{propA1} and \ref{propA2} for constants $\delta'=\delta/2,
r'=r/2, R'=2R$.

Suppose that $d_{\mathcal{H}}(G,G') = \epsilon$ such that $\epsilon<
\epsilon_0$.
By Lemma~\ref{Lem-M}(b) for each vertex of $G$, say $\vectheta_i$,
there is a vertice of $G'$, say $\vectheta'_i$, such that
$\vectheta'_i \in B_{\d}(\vectheta_i,C_0\epsilon)$ with $C_0 = C_0(G)$
depending only on~$\delta$.
Moreover, there is at least one vertice of $G$, say
$\vectheta_1$, for which $\|\vectheta'_1 - \vectheta_1\| \geq
\epsilon$.

There are only three possible general positions for $\vectheta'_1$ relatively
to $G$. Either
\begin{enumerate}[(iii)]
\item[(i)] $\vectheta'_1 \in G$, or
\item[(ii)] $\vectheta'_1 \in2\vectheta_1 - G$, or
\item[(iii)] $\vectheta'_1$ lies in a cone formed by all half-spaces
supported by the $\d-1$ dimensional faces adjacent to $\vectheta_1$.
Among these
there is one half-space that contains $G$, and one that does not
contain $G$.
\end{enumerate}
If (i) is true, by Property \ref{propA1}, $G$ has at least one face $S \supset
\vectheta_1$
such that the distance from $\vectheta'_1$ to the hyperplane that
provides support
for $S$ is bounded from below by $\epsilon r/R$.
Let $B \subset S$ be a
homothetic transformation of $S$ with respect to center $\vectheta_1$
that maps $x \in S$ to $\tilde{x} \in B$ such that the ratio
$\eta:= \|\vectheta_1 - \tilde{x}\|/
\|\vectheta_1 - x\|$ satisfies
$1-\eta= 2C_0\epsilon/\min_{i\neq j}\|\vectheta_i-\vectheta_j\|
\in(0,1/2)$.
This is possible
as soon as $\epsilon< \min_{i\neq j} \|\vectheta_i -\vectheta_j\|/4C_0$.
Then, for any $\vectheta_j \in S$, $j\neq1$,
under this transformation $\vectheta_j \mapsto\tilde{\vectheta}_j
\in S$
for which $\|\tilde{\vectheta}_j - \vectheta_j\| = (1-\eta)\|
\vectheta_1-\vectheta_j\|
\geq2C_0\epsilon$. Since
$\|\vectheta'_j - \vectheta_j\| \leq C_0\epsilon$, the construction
of $B$ implies that $\vectheta'_j \notin B$. As a result,
$B \cap G' = \emptyset$. Moreover,
$\vol_{\d-1} B = \eta^{p-1} \vol_{\d-1} S \geq(1/2)^{p-1}\vol
_{\d-1} S \geq c_0(G)$, a
constant depending only on $G$.
Let $Q$ be a $\d$-pyramid which has
apex $\vectheta'_1$ and base $B$. It follows that
$\interior Q \cap\interior G' = \emptyset$, which implies
that $\interior Q \subset G\setminus G'$ ($\interior$ stands for the
relative interior of a set).
Hence, $\vol_{\d}G\setminus G' \geq
\vol_{\d}Q \geq\frac{1}{\d}\epsilon r/R \vol_{\d-1}B
\geq\frac{1}{\d}\epsilon c_0(G) r/R$.

If (ii) is true, the same argument can be applied to show that
$\vol_{\d}(G' \setminus G) = \Omega(\epsilon)$.
If (iii) is true,
a similar argument continues to apply: we obtain a lower bound for either
$\vol_{\d} G' \setminus G$ or $\vol_{\d} G\setminus G'$.
$G$ has a face (supported by a hyperplane, say, $H$) such that
the distance from $\vectheta'_1$ to $H$ is $\Omega(\epsilon)$.
If the
half-space supported by $H$ that contains $\vectheta'_1$ but does not
contain $G$,
then $\vol_{\d} G'\setminus G = \Omega(\epsilon)$.
If, on the other hand, the associated half-space does contain $G$, then
$\vol_{\d} G\setminus G' = \Omega(\epsilon)$.
The proof for case (b) is similar and is omitted.
\end{pf*}

\section{Proof of abstract posterior contraction theorem}
\label{appB}

\setcounter{equation}{15}
\renewcommand{\theequation}{\arabic{equation}}

A key ingredient in the general analysis of convergence of posterior
distributions is
through establishing the existence of tests for subsets of parameters
of interest.
A test $\varphi_{m,n}$ is a measurable indicator function of the
$m\times n$-sample
$\SS_{[n]}^{[m]}= (\SS_{[n]}^{1},\ldots,\SS_{[n]}^{m})$ from an
admixture model. For
a fixed pair of convex
polytopes $G_0,G_1\in\Gcal$, where $\Gcal$ is a given subset
of $\Delta^d$,
consider tests for discriminating $G_0$
against a closed Hausdorff ball centered at $G_1$.
The following two lemmas on the existence of tests highlight the fundamental
role of the Hellinger information:

\begin{lemma}
\label{Lem-test-ball}
Fix a pair of $(G_0,G_1) \in(\Gcal^*\times\Gcal)$ and let $\delta=
d_{\mathcal{H}}(G_0,G_1)$.
Then, there exist tests $\{\varphi_{m,n}\}$ that have the following properties:
%
\begin{eqnarray}
\label{Eqn-power-1} P_{\SS_{[n]}|G_0}^{m} \varphi_{m,n}& \leq& D
\exp\bigl[-m\Psi_{\Gcal
,n}(\delta)/8\bigr],
\\
\label{Eqn-power-2}\sup_{G\in\Gcal\cap B_{d_{\mathcal{H}}}(G_1,\delta/2)} P_{\SS
_{[n]}|G}^{m} (1-
\varphi_{m,n}) & \leq& \exp\bigl[-m\Psi_{\Gcal,n} (\delta)/8\bigr].
\end{eqnarray}
Here, $D :=
D (\Phi_{\Gcal,n}(\delta), \Gcal\cap
B_{d_{\mathcal{H}}}(G_1,\delta/2), d_{\mathcal{H}} )$, i.e.,
the maximal number of elements in $\Gcal\cap B_{d_{\mathcal
{H}}}(G_1,\delta/2)$
that are
mutually separated by at least $\Phi_{\Gcal,n}(\delta)$ in
Hausdorff metric $d_{\mathcal{H}}$.
\end{lemma}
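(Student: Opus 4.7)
Write $\Psi := \Psi_{\Gcal,n}(\delta)$ and $\Phi := \Phi_{\Gcal,n}(\delta)$ for brevity. The plan is to reduce the composite testing problem to a finite family of Le Cam--Birg\'e tests by covering the alternative $\Gcal \cap B_{\dH}(G_1,\delta/2)$ with a $\Phi$-packing, building one test at each packing centre, and taking their maximum.

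First I observe that for any $G \in \Gcal \cap B_{\dH}(G_1,\delta/2)$ the triangle inequality in $\dH$ yields $\dH(G_0,G) \geq \dH(G_0,G_1) - \dH(G_1,G) \geq \delta/2$, so by the very definition of $\Psi$ we have $h^2(\pGz,\pG) \geq \Psi$. Next, let $G_1^{(1)},\ldots,G_1^{(D)}$ be a maximal $\Phi$-packing of $\Gcal \cap B_{\dH}(G_1,\delta/2)$ in Hausdorff metric, so that $D$ equals the packing number appearing in the statement. By maximality this packing is also a $\Phi$-cover, so every $G$ in the alternative lies within Hausdorff distance $\Phi$ of some $G_1^{(j)}$, and the defining property of $\Phi$ then gives
\begin{equation*}
h^2(\pG, p_{\SSn|G_1^{(j)}}) \leq \Psi/4, \qquad \text{i.e.} \qquad h(\pG, p_{\SSn|G_1^{(j)}}) \leq \sqrt{\Psi}/2.
\end{equation*}

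At each packing centre $G_1^{(j)}$ I invoke the Birg\'e--Le Cam minimax test between the singleton $\{\pGz\}$ and the Hellinger ball $\mathcal{B}_j := \{Q : h(Q, p_{\SSn|G_1^{(j)}}) \leq \sqrt{\Psi}/2\}$. This ball is convex in the density space since $h^2(P_0,\cdot)$ is convex, and because $h(\pGz, p_{\SSn|G_1^{(j)}}) \geq \sqrt{\Psi}$ a second triangle inequality for Hellinger distance gives $h(\pGz, Q) \geq \sqrt{\Psi}/2$ uniformly over $Q \in \mathcal{B}_j$. Birg\'e's convex-set extension of the Le Cam test (cf.\ Ghosal--Ghosh--van der Vaart 2000, Theorem~7.1) then produces a test $\phi_j$ on the $m$-sample satisfying
\begin{equation*}
\PGz^{m}\, \phi_j \leq \exp[-m\Psi/8], \qquad \sup_{Q \in \mathcal{B}_j} Q^{m}(1-\phi_j) \leq \exp[-m\Psi/8],
\end{equation*}
the constant $1/8$ arising as $(\sqrt{\Psi}/2)^{2}/2$. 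Setting $\phitest := \max_{1\leq j\leq D}\phi_j$, a union bound delivers $\PGz^{m}\phitest \leq \sum_{j}\PGz^{m}\phi_j \leq D\exp[-m\Psi/8]$, which is~\eqref{Eqn-power-1}; and for any $G$ in the alternative, choosing $j$ with $\dH(G,G_1^{(j)}) \leq \Phi$ places $\pG$ in $\mathcal{B}_j$, so $\PG^{m}(1-\phitest)\leq\PG^{m}(1-\phi_j)\leq \exp[-m\Psi/8]$, giving~\eqref{Eqn-power-2}.

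The main obstacle is the third step: obtaining a test whose Type~II error is controlled \emph{uniformly} over an entire Hellinger ball around each $p_{\SSn|G_1^{(j)}}$, not just at the centre. This is exactly what Birg\'e's convex-set version of the Le Cam test provides, but it requires checking that $\mathcal{B}_j$ is convex in the density space (which uses convexity of $Q \mapsto h^{2}(\pGz,Q)$) and paying the price of a factor $1/4$ in the exponent (hence the $1/8$) to absorb the radius $\sqrt{\Psi}/2$ of $\mathcal{B}_j$. Once this uniform single-centre test is in hand, the packing/union-bound combination is routine.
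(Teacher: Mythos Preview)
Your proof is correct and follows essentially the same route as the paper: construct a maximal $\Phi_{\Gcal,n}(\delta)$-packing of $\Gcal\cap B_{\dH}(G_1,\delta/2)$, invoke at each packing centre the Birg\'e--Le Cam test of $\pGz$ against a Hellinger ball around $p_{\SSn|G_1^{(j)}}$, take the maximum, and conclude via a union bound. The only cosmetic difference is that the paper uses Hellinger balls of radius $\tfrac{1}{2}h(\pGz,p_{\SSn|\tilde G_t})$ (depending on the centre) rather than your fixed radius $\sqrt{\Psi}/2$; since the former is at least the latter, both choices work and lead to the same exponent $\Psi/8$.
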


\begin{pf}
We begin the proof by noting that a direct application of standard
results on existence of tests (cf. \cite{LeCam-86}, Chapter~4) is not
possible, due to the lack of convexity of the space of densities
of $\SS_{[n]}$ as $G$ varies in some subset $\Gcal\subset\Gcal^*$,
even if $\Gcal$ is convex. This difficulty is overcome by appealing to
a packing argument.

Consider a maximal $\Phi_{\Gcal,n}(\delta)$-packing in $d_{\mathcal{H}}$
metric for the set $\Gcal\cap B_{d_{\mathcal{H}}}(G_1,\delta/2)$.
This yields a set of $D = D(\Phi_{\Gcal,n}(\delta),
\Gcal\cap B_{d_{\mathcal{H}}}(G_1,\delta/2),
d_{\mathcal{H}})$ elements $\tilde{G}_1,\ldots,\tilde{G}_D \in
\Gcal\cap
B_{d_{\mathcal{H}}}(G_1,\delta/2)$.

Next, we note the following fact: for any $t=1,\ldots, D$, if $G \in
\Gcal\cap
B_{d_{\mathcal{H}}}(G_1,\delta/2)$ and $d_{\mathcal{H}}(G,\tilde
{G}_t) \leq\Phi_{\Gcal
,n}(\delta)$,
then by the definition of $\Phi$,
$h^2({p_{\SS_{[n]}|G}},p_{S_{[n]}|\tilde{G}_t}) \leq
\frac{1}{4}\Psi_{\Gcal
,n}(\delta)$.
By the definition of Hellinger information,
$h^2({p_{\SS_{[n]}|G_0}}, p_{S_{[n]}|\tilde{G}_t}) \geq
\Psi_{\Gcal,n}(\delta
)$. Thus,
by triangle inequality, $h({p_{\SS_{[n]}|G_0}},
{p_{\SS_{[n]}|G}}) \geq
\frac{1}{2}\Psi_{\Gcal,n}(\delta)^{1/2}$.

For each pair of $G_0,\tilde{G}_t$ there exist tests $\omega_n^{(t)}$
of ${p_{\SS_{[n]}|G_0}}$ versus the Hellinger ball $\Pcal
_2(t) :=
\{{p_{\SS_{[n]}|G}}| G \in\Gcal^*; h({p_{\SS_{[n]}
|G}}, p_{S_{[n]}|\tilde{G}_t}) \leq
\frac{1}{2}h({p_{\SS_{[n]}|G_0}}, p_{S_{[n]}|\tilde
{G}_t})\}$ such that,
\begin{eqnarray*}
P_{\SS_{[n]}|G_0}^{m} \omega_{m,n}^{(t)} &\leq&\exp
\bigl[-m h^2({p_{\SS_{[n]}
|G_0}}, p_{S_{[n]}|\tilde{G}_t})/8
\bigr],
\\
\sup_{P_2 \in\Pcal_2(t)} P_2^m \bigl(1-
\omega_{m,n}^{(t)}\bigr) &\leq&\exp\bigl[-m h^2(
{p_{\SS_{[n]}|G_0}}, p_{S_{[n]}|\tilde{G}_t})/8\bigr].
\end{eqnarray*}

Consider the test $\varphi_{m,n} = \max_{1\leq t\leq D} \omega
_{m,n}^{(t)}$, then
\begin{eqnarray*}
P_{\SS_{[n]}|G_0}^{m} \varphi_{m,n} & \leq& D \times\exp
\bigl[-m \Psi _{\Gcal
,n}(\delta)/8\bigr],
\\
\sup_{G \in\Gcal\cap B_{d_{\mathcal{H}}}(G_1,\delta/2)} P_{\SS
_{[n]}|G}^{m} (1-
\varphi_{m,n}) & \leq& \exp\bigl[-m \Psi_{\Gcal,n}(\delta)/8\bigr].
\end{eqnarray*}
The first inequality is due to $\varphi_{m,n} \leq\sum_{t=1}^{D}\omega_{m,n}^{(t)}$,
and the second is due to the fact that for any $G \in\Gcal\cap
B_{d_{\mathcal{H}}
}(G_1,\delta/2)$
there is some $d = 1,\ldots, D$ such that $d_{\mathcal{H}}(G,\tilde
{G}_t) \leq
\Phi_{\Gcal,n}(\delta)$,
so that ${p_{\SS_{[n]}|G}}\in\Pcal_2(t)$.
\end{pf}

Next, the existence of tests can be shown for discriminating $G_0$ against
the complement of a closed Hausdorff ball:

\begin{lemma}
\label{Lem-ballcomplement}
Let $G_0 \in\Gcal^*$ and $\Gcal\subset\Gcal^*$.
Suppose that for some non-increasing function $D(\epsilon)$, some
$\epsilon_{m,n}\geq0$ and every $\epsilon> \epsilon_{m,n}$,
%
\begin{eqnarray}\label{Eqn-entropy-1}
&&\sup_{G_1\in\Gcal} D\bigl(\Phi_{\Gcal,n}(\epsilon), \Gcal\cap
B_{d_{\mathcal{H}}}(G_1,\epsilon/2), d_{\mathcal{H}}\bigr)
\nonumber
\\[-8pt]\\[-8pt]
&&\hphantom{\sup_{G_1\in\Gcal}}{}\times D\bigl(\epsilon/2, \Gcal\cap B_{d_{\mathcal{H}}}(G_0,2
\epsilon)\setminus B_W(G_0,\epsilon), 
d_{\mathcal{H}}\bigr) \leq D(\epsilon). \nonumber
\end{eqnarray}
Then, for every $\epsilon> \epsilon_{m,n}$, and any $t_0 \in\Nat$,
there exist tests $\varphi_{m,n}$ (depending on $\epsilon> 0$) such that
%
\begin{eqnarray}
\label{Eqn-type-1b} P_{G_0} \varphi_{m,n} & \leq& D(\epsilon) \sum
_{t=t_0}^{\lceil\operatorname{Diam}(\Gcal)/\epsilon\rceil
}\exp\bigl[-m\Psi_{\Gcal,n}(t
\epsilon)/8\bigr],
\\
\label{Eqn-type-2b} \sup_{G\in\Gcal: d_{\mathcal{H}}(G_0,G) > t_0\epsilon} P_{G} (1-\varphi
_{m,n}) & \leq& \exp\bigl[-m \Psi_{\Gcal,n}(t_0
\epsilon)/8\bigr].
\end{eqnarray}
\end{lemma}

\begin{pf}
The proof consists of a standard peeling device (e.g., \cite
{Ghosal-Ghosh-vanderVaart-00})
and a packing argument as in the previous proof.
For a given $t \in\Nat$ choose a maximal $t\epsilon/2$-packing for
set $S_t = \{G\dvt t\epsilon< d_{\mathcal{H}}(G_0,G) \leq(t+1)\epsilon
\}$.
This
yields a set $S'_t$ of at most $D(t\epsilon/2,S_t,d_{\mathcal{H}})$ points.
Moreover, every $G\in S_t$ is within distance $t\epsilon/2$ of at least
one of the points in $S'_t$. For every such point $G_1\in S'_t$, there
exists a test
$\omega_{m,n}$ satisfying Equations \eqref{Eqn-power-1} and \eqref{Eqn-power-2},
where $\delta$ is taken to be $\delta=t\epsilon$.
Take $\varphi_{m,n}$ to be the maximum of all tests attached this way to
some point $G_1\in S'_t$ for some $t\geq t_0$. Note that
$G \in\Gcal\subset\Delta^d$, so $t \leq\lceil\operatorname
{Diam}(\Gcal)/\epsilon\rceil$.
Then, by union bound, and the condition that $D(\epsilon)$ is non-increasing,\vspace*{-1pt}
\begin{eqnarray*}
P_{\SS_{[n]}|G_0}^{m} \varphi_{m,n} & \leq& \sum
_{t=t_0}^{\lceil\sfrac{\operatorname{Diam}(\Gcal
)}{\epsilon} \rceil} \sum_{G_1 \in S'_t} D
\bigl(\Phi_{\Gcal,n}(t\epsilon), \Gcal\cap B_{d_{\mathcal{H}}}(G_1,t
\epsilon/2), d_{\mathcal{H}} \bigr)
\\
&&\hphantom{\sum
_{t=t_0}^{\lceil\sfrac{\operatorname{Diam}(\Gcal
)}{\epsilon} \rceil} \sum_{G_1 \in S'_t}}{} \times\exp\bigl[-m \Psi_{\Gcal,n}(t\epsilon)/8\bigr]
\\
& \leq& D(\epsilon)\sum_{t\geq t_0}\exp\bigl[-m
\Psi_{\Gcal
,n}(t\epsilon)/8\bigr],
\end{eqnarray*}
and\vspace*{-1pt}
\begin{eqnarray*}
\sup_{G\in\bigcup_{u\geq t_0}S_{u}} P_{\SS_{[n]}|G}^{m} (1-
\varphi_n) & \leq& \sup_{u\geq t_0} \exp\bigl[-m
\Psi_{\Gcal,n}(u\epsilon)/8\bigr]
\\
& \leq& \exp\bigl[-m\Psi_{\Gcal,n}(t_0\epsilon)/8\bigr],
\end{eqnarray*}
where the last inequality is due the monotonicity of $\Psi_{\Gcal
,n}(\cdot)$.
\end{pf}
%
%
\begin{pf*}{Proof of the abstract posterior contraction theorem
(Theorem \protect\ref{Thm-Gen})}
In this proof, to simplify notations denote $P_{G} := P_{\SS_{[n]}|G}$.
By a result of Ghosal \textit{et al.} \cite{Ghosal-Ghosh-vanderVaart-00} (Lemma~8.1, page 524),
for every $\epsilon> 0, C> 0$ and every probability measure $\Pi_0$
supported on the set
$B_K(G_0,\epsilon)$ defined by Equation \eqref{Eqn-kl-neighborhood}, we have,
\[
P_{G_0} \Biggl(\int\prod_{i=1}^{m}
\frac{p_{G}(\SS
_{[n]}^i)}{p_{G_0}(\SS_{[n]}
^i)} \,\mathrm{d}\Pi_0(G) \leq\exp\bigl(-(1+C)m
\epsilon^2\bigr) \Biggr) \leq\frac{1}{C^2m\epsilon^2}.
\]
This entails that, by fixing $C= 1$, there is an event $A_m$
with $P_{G_0}^m$-probability at least $1-(m\epsilon_{m,n}^2)^{-1}$, for
which there holds:
%
\begin{equation}
\label{Eqn-denominator} \int\prod_{i=1}^{n}p_G
\bigl(\SS_{[n]}^i\bigr)/p_{G_0}\bigl(
\SS_{[n]}^i\bigr) \,\mathrm{d}\Pi(G) \geq\exp\bigl(-2m
\epsilon_{m,n}^2\bigr)\Pi\bigl(B_K(G_0,
\epsilon_{m,n})\bigr).
\end{equation}
Let $\Ocal_m = \{G \in\mathcal{G}^*\dvt  d_{\mathcal{H}}(G_0,G) \geq
M_m \epsilon_{m,n}\}$.
Due to Equation \eqref{Eqn-entropy-2}, the condition specified by Lemma~\ref
{Lem-ballcomplement}
is satisfied by setting $D(\epsilon) = \exp(m\epsilon_{m,n}^2)$
(constant in
$\epsilon$).
Thus, there exist tests $\varphi_{m,n}$ for which Equations \eqref
{Eqn-type-1b}
and \eqref{Eqn-type-2b} hold with respect to $\Gcal= \supp\Pi$ and
the given $G_0$.
Then,
\begin{eqnarray*}
  P_{G_0}\Pi\bigl(G\in\Ocal_m|\SS_{[n]}^{[m]}
\bigr)
   &=&  P_{G_0}\bigl[\varphi_{m,n}\Pi\bigl(G\in
\Ocal_m|\SS_{[n]}^{[m]}\bigr)\bigr] +
P_{G_0} \bigl[(1-\varphi_{m,n})\Pi\bigl(G\in
\Ocal_m|\SS_{[n]}^{[m]}\bigr)\bigr]
\\
  &\leq& P_{G_0}\bigl[\varphi_{m,n}\Pi\bigl(G\in
\Ocal_m|\SS_{[n]}^{[m]}\bigr)\bigr] +
P_{G_0}\indicator\bigl(A_m^c\bigr)
\\
&&{}+ P_{G_0} \bigl[(1-\varphi_{m,n})\Pi\bigl(G\in
\Ocal_m|\SS_{[n]}^{[m]} \bigr)\indicator(A_m)
\bigr].
\end{eqnarray*}

Applying Lemma~\ref{Lem-ballcomplement}, the first term in the preceding
display is bounded above by
\[
P_{G_0}\varphi_{m,n}\leq D(\epsilon_{m,n})\sum
_{j\geq M_m} \exp\bigl[-m\Psi_{\Gcal,n}(j\epsilon
_{m,n})/8\bigr] \rightarrow0,
\]
thanks to Equation \eqref{Eqn-rate-cond}.
The second term in the above display is bounded by $(m\epsilon
_{m,n}^2)^{-1}$ by
the definition of $A_m$, so this term vanishes.
It remains to show that third term in the display also vanishes as
$m\rightarrow\infty$. By Bayes' rule,
\[
\Pi\bigl(G\in\Ocal_m|\SS_{[n]}^{[m]}\bigr) =
\frac{  \int_{\Ocal_m}\prod_{i=1}^{m}p_G(\SS_{[n]}^i)/p_{G_0}(\SS_{[n]}^i)
\,\mathrm{d}\Pi(G)}{  \int\prod_{i=1}^{m}p_G(\SS_{[n]}^i)/p_{G_0}(\SS_{[n]}^i)
\,\mathrm{d}\Pi(G)},
\]
and then obtain a lower bound for the denominator by Equation \eqref
{Eqn-denominator}.
For the nominator, by Fubini's theorem:
\comment{
%
\begin{eqnarray}
&& P_{G_0} \int_{\Ocal_m \cap\Gcal} (1-\varphi_{m,n})
\prod_{i=1}^{m} p_G\bigl(
\SS_{[n]}^i\bigr)/p_{G_0}\bigl(
\SS_{[n]}^i\bigr) d\Pi(G)
\nonumber
\\
& = & P_{G_0} \sum_{j\geq M_m} \int
_{S_{m,j}}(1-\varphi_{m,n}) \prod
_{i=1}^{m} p_G\bigl(
\SS_{[n]}^i\bigr)/p_{G_0}\bigl(
\SS_{[n]}^i\bigr) d\Pi(G)
\nonumber
\\
& = & \sum_{j\geq M_m} \int_{S_{m,j}}
P_{G}(1-\varphi_{m,n}) d\Pi(G)
\nonumber
\\
\label{Eqn-T1} & \leq& \sum_{j\geq M_m}
\Pi(S_{m,j})\exp\bigl[-m\Psi_{\Gcal
,n}(j\epsilon_{m,n})/8
\bigr],
\end{eqnarray}}
%
\begin{eqnarray}\label{Eqn-T1}
&& P_{G_0} \int_{\Ocal_m \cap\Gcal} (1-\varphi_{m,n})
\prod_{i=1}^{m} p_G\bigl(
\SS_{[n]}^i\bigr)/p_{G_0}\bigl(
\SS_{[n]}^i\bigr) \,\mathrm{d}\Pi(G)
\nonumber
\\[-8pt]\\[-8pt]
 &&\quad  =  \int_{\Ocal_m \cap\Gcal} P_{G}(1-
\varphi_{m,n}) \,\mathrm{d}\Pi(G) \leq\exp\bigl[-m\Psi_{\Gcal,n}(M_m
\epsilon_{m,n})/8\bigr],\nonumber
\end{eqnarray}
where the last inequality is due to Equation \eqref{Eqn-type-2b}.
\comment{
In addition, by \eqref{Eqn-support-1},
%
\begin{eqnarray}
&& P_{G_0} \int_{\Ocal_m \setminus\Gcal_m}(1-\varphi_{m,n})
\prod_{i=1}^{m} p_G\bigl(
\SS_{[n]}^i\bigr)/p_{G_0}\bigl(
\SS_{[n]}^i\bigr) d\Pi(G)
\nonumber
\\
\label{Eqn-T2} & = & \int_{\Ocal_m \setminus\Gcal_m} P_G (1-
\varphi_{m,n}) d\Pi(G) \leq\Pi\bigl(\Gcal^* \setminus\Gcal_m
\bigr). 
\end{eqnarray}
}

Now, combining bounds \eqref{Eqn-T1} and \eqref{Eqn-denominator} with
condition
\eqref{Eqn-support-2}, we obtain:
\begin{eqnarray*}
&& P_{G_0}(1-\varphi_{m,n})\Pi\bigl(G\in\Ocal_m|
\SS _{[n]}^{[m]}\bigr)\indicator(A_m)
\\
&&\quad \leq \frac{
\exp[-m\Psi_{G_0,n}(\Gcal_m,M_m\epsilon_{m,n})/8]}{\exp
(-2m\epsilon_{m,n}^2)\Pi
(B_K(G_0,\epsilon_{m,n}))}. 
\end{eqnarray*}
The upper bound in the preceeding
display converges to 0 by Equation \eqref{Eqn-support-2}, thereby
concluding the proof.
\end{pf*}

\end{appendix}

\section*{Acknowledgements}
This research was supported in part by NSF Grants CCF-1115769 and OCI-1047871.
The author wish to thank the anonymous referees for helpful comments, and
Qiaozhu Mei and Jian Tang for stimulating discussions
which helped to motivate this work.



\printhistory

\end{document}